\documentclass[11pt]{article}

\usepackage[a4paper,margin=1in]{geometry}
\usepackage{amsmath,amssymb,amsthm,mathtools}
\usepackage{microtype}
\usepackage{xcolor}
\usepackage{enumitem}
\usepackage{hyperref}
\usepackage{aliascnt}
\usepackage{cleveref}

\crefname{theorem}{Theorem}{Theorems}
\Crefname{theorem}{Theorem}{Theorems}
\crefname{lemma}{Lemma}{Lemmas}
\Crefname{lemma}{Lemma}{Lemmas}
\crefname{proposition}{Proposition}{Propositions}
\Crefname{proposition}{Proposition}{Propositions}
\crefname{corollary}{Corollary}{Corollaries}
\Crefname{corollary}{Corollary}{Corollaries}
\crefname{observation}{Observation}{Observations}
\Crefname{observation}{Observation}{Observations}
\crefname{definition}{Definition}{Definitions}
\Crefname{definition}{Definition}{Definitions}
\crefname{remark}{Remark}{Remarks}
\Crefname{remark}{Remark}{Remarks}
\crefname{section}{Section}{Sections}
\Crefname{section}{Section}{Sections}

\hypersetup{
  hidelinks,
  pdftitle={A counterexample to the Fang--Lin conjecture: Edge and spectral extremality diverge near a Turan graph},
  pdfauthor={Qi Wu and Yong Lu},
  pdfsubject={Extremal and spectral graph theory},
  pdfkeywords={spectral radius, non-r-partite graph, color-critical graph, Turan problem, Mycielskian}
}

\newtheorem{theorem}{Theorem}[section]

\newaliascnt{lemma}{theorem}
\newtheorem{lemma}[lemma]{Lemma}
\aliascntresetthe{lemma}

\newaliascnt{proposition}{theorem}
\newtheorem{proposition}[proposition]{Proposition}
\aliascntresetthe{proposition}

\newaliascnt{corollary}{theorem}
\newtheorem{corollary}[corollary]{Corollary}
\aliascntresetthe{corollary}

\theoremstyle{definition}
\newaliascnt{definition}{theorem}

\aliascntresetthe{definition}

\newaliascnt{remark}{theorem}

\aliascntresetthe{remark}

\newaliascnt{observation}{theorem}
\newtheorem{observation}[observation]{Observation}
\aliascntresetthe{observation}

\newcommand{\EX}{\operatorname{EX}}
\newcommand{\SPEX}{\operatorname{SPEX}}
\newcommand{\join}{\vee}

\title{A counterexample to the Fang--Lin conjecture: Edge and spectral extremality diverge near a Tur\'an graph\thanks{This work is supported by the National Natural Science Foundation of China (Nos.~12371348 and 12201258) and High-Quality Science and Technology Cultivation Project of Jiangsu Normal University (No.~JSNUGZL2026069).}}
\author{Qi Wu, Yong Lu\thanks{Corresponding author.}\\[2pt]
\small School of Mathematics and Statistics, Jiangsu Normal University,\\[-1pt]
\small Xuzhou, Jiangsu 221116, People's Republic of China\\[-1pt]
\small Emails:~\texttt{wuqimath@163.com, luyong@jsnu.edu.cn}}
\date{}

\begin{document}
\maketitle

\begin{abstract}
Fang and Lin [J. Algebraic Combin. 63 (2026), Art.~58] asked whether, whenever $F$ is edge-color-critical with $\chi(F)=r+1$, every non-$r$-partite, $F$-free graph of maximum adjacency spectral radius must also maximize the number of edges. We give a negative answer. Let $F=K_1\vee\mu(K_3)$, where $\mu(K_3)$ is the Mycielskian of a triangle. This graph is edge-color-critical with $\chi(F)=5$. We prove that $\SPEX_{5}(n,F)\cap\EX_{5}(n,F)=\varnothing$ for all sufficiently large $n$. Thus no graph can simultaneously maximize both the edge count and the spectral radius.
\medskip

\noindent\textbf{Keywords.} spectral radius; non-$r$-partite graph; color-critical graph; Tur\'an problem; Mycielskian.

\noindent\textbf{MSC 2020.} 05C35, 05C50.
\end{abstract}

\section{Introduction}

Tur\'an's theorem identifies the balanced complete $r$-partite graph $T_{n,r}$ as the unique $n$-vertex $K_{r+1}$-free graph with the largest number of edges \cite{Turan}. Throughout, we write $t_r(n):=e(T_{n,r})$. Simonovits \cite{Simonovits} extended this exact conclusion from cliques to every edge-color-critical graph $F$ with $\chi(F)=r+1$. The corresponding spectral statement for edge-color-critical graphs follows from Nikiforov's spectral saturation theorem \cite{Nikiforov2009}; the clique case is contained in his earlier spectral Tur\'an results \cite{Nikiforov2007}. This agreement between edge and spectral extremality is part of a broader phenomenon. For example, Wang et al.~\cite[Theorem~1.3]{WangKangXue} proved a general transfer theorem under the hypothesis $\operatorname{ex}(n,F)=t_r(n)+O(1)$.

A different problem appears when the host graph is required not to be $r$-partite. The Tur\'an graph is then excluded, and one asks for the first extremal layer outside the $r$-colorable class. For $K_{r+1}$, the edge problem goes back to Brouwer and was later revisited and sharpened in several forms \cite{Brouwer,KangPikhurko}. On the spectral side, Lin et al.~\cite{LinNingWu} solved the non-bipartite triangle-free case, and Li and Peng \cite{LiPeng} extended the result to non-$r$-partite $K_{r+1}$-free graphs. For books, the spectral-to-edge inclusion follows by combining the spectral classification of Liu and Miao \cite{LiuMiao} with the edge-extremal classification of Miao et al.~\cite{MiaoLiuVanDam}. Fang and Lin \cite{FangLin} proved the corresponding inclusion for $\theta(1,q,s)$ with $q,s\ge2$ and $q$ even, while Wang et al.~\cite{WangChenZhang} treated complete split graphs.

A closely related higher-chromatic layer has been studied for triangle-free graphs of chromatic number at least four. Ren et al.~\cite{RenWangWangYang} determined the edge-extremal graphs, and Zhu and Lin \cite{ZhuLin} determined the spectral-extremal graph for all sufficiently large orders. In that problem the two objectives select the same blow-up of the Gr\"otzsch graph. The example below shows that this agreement can fail for a different color-critical forbidden graph.

For the edge objective under an additional chromatic condition, Simonovits's general theory already gives finite symmetrized descriptions of sufficiently large extremal graphs and a linear first correction to the Tur\'an number \cite{SimonovitsAdditional}. Roberts and Scott proved a quantitative stability theorem for forbidden graphs with a critical edge \cite[Theorem~1.4]{RobertsScott}, and Hou et al.~proved the corresponding stability statement for suspensions of edge-critical graphs \cite[Theorem~1.5]{HouLiZeng}. These results place the edge-extremal problem near a complete multipartite graph, but they do not determine the present first layer or its spectral analogue.

For an $n$-vertex graph $G$, let $\rho(G)$ be the spectral radius of its adjacency matrix; standard background on adjacency spectra can be found in \cite{BrouwerHaemers}. A graph $F$ is called edge-color-critical if it has an edge whose deletion lowers its chromatic number. For such an $F$ with $\chi(F)=r+1$, define
\[
\operatorname{ex}_{r+1}(n,F)=\max\{e(G): |V(G)|=n,\ F\nsubseteq G,\ \chi(G)>r\},
\]
and let $\EX_{r+1}(n,F)$ be the family of graphs attaining this maximum. Define $\operatorname{spex}_{r+1}(n,F)$ and $\SPEX_{r+1}(n,F)$ analogously by maximizing the adjacency spectral radius. Motivated by the results above, Fang and Lin \cite{FangLin} proposed
\begin{equation}\label{eq:FL}
\SPEX_{r+1}(n,F)\subseteq\EX_{r+1}(n,F)
\end{equation}
for every edge-color-critical $F$ and all sufficiently large $n$. O and Wu recently proved \eqref{eq:FL} under an embeddability condition together with an exact hypothesis on the non-$r$-partite Tur\'an number, and they determined the spectral-extremal graphs for all edge-color-critical complete multipartite forbidden graphs \cite{OWu}. More precisely, their reduction theorem assumes $\operatorname{ex}_{r+1}(n,F)=t_r(n)-\lfloor n/r\rfloor+2(s-1)$ for an $s$-embeddable graph $F$; see \cite[Theorem~1.2]{OWu}. The example below has, for $r=4$, the different first correction $-n/2+O(1)$ and therefore lies outside the numerical range of that theorem. We nevertheless use several of their cleaning and one-vertex transfer results, always with explicit attribution.

We show that this conclusion fails without such a restriction. Let $\mu(K_3)$ denote the Mycielskian of a triangle and set $F=K_1\vee\mu(K_3)$. Mycielski introduced the operation $\mu$ in his construction of graphs with large chromatic number and small clique number \cite{Mycielski}. Lam et al.~proved that $\mu(K_3)$ is edge-color-critical with chromatic number $4$ \cite{LamLinGuSong}, and Hou et al.~showed that suspending an edge-critical graph preserves a critical edge \cite[Lemma~3.3]{HouLiZeng}. Thus $F$ is edge-color-critical with chromatic number $5$; we record this known fact in \Cref{prop:Fcritical}. Hou et al.~also determined the ordinary Tur\'an problem for suspensions of edge-critical graphs \cite[Theorem~1.4]{HouLiZeng}. Their result concerns the unrestricted $F$-free problem and does not determine the first extremal layer under the additional condition $\chi(G)>4$. The local coloring obstruction in the present layer can be arranged in two essentially different ways. This flexibility is exactly what separates the two extremal objectives.

Our main result is as follows.

\begin{theorem}\label{thm:main}
Let $F=K_1\vee\mu(K_3)$. For all sufficiently large $n$, we have $\SPEX_5(n,F)\cap\EX_5(n,F)=\varnothing$. In particular, the Fang--Lin conjecture is false. Moreover, $\operatorname{ex}_{5}(n,F)=t_4(n)-n/2+O(1)$, whereas every $G\in\SPEX_5(n,F)$ satisfies $e(G)=t_4(n)-3n/4+O(1)$ and $\rho(G)=3n/4-3/4+O(n^{-1})$.
\end{theorem}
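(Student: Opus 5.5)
We will build two explicit near-Turán families, one for each objective, and then show that every extremal graph, in either sense, belongs to the corresponding family. Fix $r=4$ and write $V_1,\dots,V_4$ for the parts of an almost balanced partition. Since $\chi(G)>4$, some bounded local configuration must violate $4$-colourability. A copy of $F=K_1\vee\mu(K_3)$ uses an apex vertex together with a $4$-chromatic $\mu(K_3)$. We therefore expect $F$-freeness to force the vertices of this configuration to lose a linear number of cross edges. We expect two essentially different ways to pay this cost:
\begin{itemize}[leftmargin=*]
\item a \emph{spread} arrangement, in which two vertices each miss about half of one part, costing about $n/2$ edges;
\item a \emph{concentrated} arrangement, in which essentially one vertex is cut down to a bounded-type neighbourhood, costing about $3n/4$ edges.
\end{itemize}
The first step is to write these down as graphs $G_E$ and $G_S$. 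We then verify that each contains no $F$ and has $\chi>4$, by exhibiting a $5$-colouring and an odd-wheel-type obstruction. This gives $e(G_E)=t_4(n)-n/2+O(1)$. For $G_S$ we compute $\rho(G_S)=3n/4-3/4+O(n^{-1})$ from its equitable quotient matrix, which has bounded size. Finally we check directly that $\rho(G_E)<\rho(G_S)$, using the same quotient method: the spread deletions lower $\rho$ by a constant strictly larger than $3/4$.

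Next comes stability and cleaning. For $\EX_5$, the lower bound together with the quantitative stability theorems of Roberts--Scott \cite[Theorem~1.4]{RobertsScott} and Hou et al.\ \cite[Theorem~1.5]{HouLiZeng} shows that any extremal $G$ is $o(n^2)$-close to $T_{n,4}$. For $\SPEX_5$, the bound $\rho\ge 3n/4-O(1)$ together with Nikiforov-type spectral stability gives the same conclusion. We then apply the cleaning and one-vertex transfer results of O and Wu \cite{OWu} to obtain the following structure:
\begin{itemize}[leftmargin=*]
\item an optimal partition with a bounded exceptional set $W$;
\item every vertex outside $W$ adjacent to all but $O(1)$ vertices of the other parts;
\item in the spectral case, a Perron vector $x$ with $x_v=(1+O(1/n))\,x_{\max}$ off $W$.
\end{itemize}
Within this bounded window, $\chi(G)>4$ forces either an edge inside some part or a bounded obstruction through $W$. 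We then classify, via a finite case analysis using $F$-freeness, how the vertices of $W$ can lose neighbours. The claim to prove is that the total loss is at least $n/2-O(1)$, with equality only in the spread pattern. Every other pattern loses at least $3n/4-O(1)$ edges and, after optimisation, is the concentrated pattern. This yields $\operatorname{ex}_5(n,F)=t_4(n)-n/2+O(1)$ and $\EX_5(n,F)$ consisting of spread-type graphs.

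For the spectral side we use the Rayleigh-quotient expansion. For a graph $G$ with near-uniform Perron vector, $\rho(G)$ is $3n/4$ minus a weighted deletion count, plus the second-order term coming from the deficient vertices. Concentrated loss at a single vertex $w$ makes $x_w$ itself drop by a constant factor. This damps the effect of its missing edges, so the cost in $\rho$ is $3/4+O(1/n)$. Spread loss over two vertices with balanced entries gives a strictly larger constant. Comparing the two constants shows that every $G\in\SPEX_5(n,F)$ is of concentrated type, hence has $e(G)=t_4(n)-3n/4+O(1)<\operatorname{ex}_5(n,F)$ for large $n$, and so $\SPEX_5\cap\EX_5=\varnothing$.

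The main obstacle is the exhaustive local classification of $F$-free obstructions with $\chi>4$ near $T_{n,4}$, together with making the second-order spectral comparison rigorous uniformly over all patterns. I would try first to reduce to vertices of $W$ with a small number of neighbourhood types. Then I would handle each type by an explicit embedding of $\mu(K_3)$ plus an apex into the large common neighbourhoods, which forces the claimed deletions.
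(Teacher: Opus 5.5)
Your outline has the right overall shape: two explicit graphs, stability, a reduction to a bounded exceptional set, and a finite comparison. But both load-bearing steps are asserted rather than proved, and the first is attributed to a source that does not supply it.

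\textbf{The reduction to a bounded exceptional set.} O and Wu's cleaning gives only an exceptional set with $|L|\le\eta n$, and allows up to $21\eta n$ missing cross neighbours per vertex. Their reduction to bounded structure assumes an embeddability condition and the exact value $\operatorname{ex}_{r+1}(n,F)=t_r(n)-\lfloor n/r\rfloor+2(s-1)$. Here the correction is $-n/2+O(1)$, so that theorem does not apply. Passing from linear to bounded exceptional sets is exactly where the non-$4$-partite constraint bites. Every comparison graph you would use (rewiring a low-degree vertex, cloning twins, shifting a vertex between classes) must still satisfy $\chi>4$, and nothing in your plan guarantees this. The paper handles it in two stages.
\begin{itemize}
\item It takes $u$ with $\chi(G-u)=4$ and builds a bounded set $J\ni u$ with $\chi(G[J])>4$ from colour-forcing gadgets on low-defect vertices.
\item Only then does it use $J$-preserving replacements $N(v)\mapsto R_i$, alignment of the heavy Perron set with the colour classes, twin refinement, class merging, and a constant-order balance argument.
\end{itemize}
The outcome is that every spectral-extremal graph, and at least one edge-extremal graph, is a template: bounded $S$, four balanced pairwise complete classes $Q_i$, and each $s\in S$ complete or anticomplete to each $Q_i$. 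Your near-constancy $x_v=(1+O(1/n))x_{\max}$ off $W$ is likewise only available after this structure is established.

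\textbf{The finite classification.} This step lacks the invariant that controls both objectives, and without it the separation cannot be checked. In a template, let $s\in S$ be complete to $k_s$ classes and put $\delta_s=3-k_s$. The classes $s$ misses form a colour list of size $\delta_s+1$, and $\chi(G)>4$ becomes non-list-colourability of $G[S]$. One then gets $e(G)=t_4(n)-\frac n4\sum_s\delta_s+O(1)$. From the uniform expansion $\rho=\frac34N+\frac1{12}\sum_sk_s^2+O(N^{-1})$ one also gets $\rho(G)=\frac{3n}4-\sum_s\delta_s(6-\delta_s)/12+O(n^{-1})$.

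The separation comes from the concavity of $\delta(6-\delta)/12$. It only works because $F$-freeness rules out $\sum_s\delta_s\le1$ and, crucially, a single vertex of defect $2$. That pattern would lose only $n/2$ edges and only $2/3<3/4$ in $\rho$, so it would be optimal for \emph{both} objectives and there would be no counterexample. Excluding it requires an explicit copy of $F$: triangle of $\mu(K_3)$ in three classes, shadows at the three forced-colour neighbours of $s$, root at $s$, apex in the fourth class.

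Likewise, ``every $G\in\SPEX_5$ is concentrated'' needs a lower bound on the spectral cost over all admissible patterns, not a comparison of two graphs. For instance $(2,1)$ gives $13/12$, $(1,1,1)$ gives $5/4$, and total defect at least $4$ gives at least $1$. Your remark that other patterns are concentrated ``after optimisation'' conflates the two objectives, since $(3)$, $(2,1)$ and $(1,1,1)$ all lose $3n/4$ edges.

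\textbf{The constructions.} These are not pinned down. Two vertices each missing half of one part would cost about $n/4$ edges, not $n/2$. In a working construction such as $A_n$, two vertices each miss a whole additional class, and two further vertices complete to three classes complete the obstruction. Also, in the concentrated graph the deficient vertex's Perron entry is $O(x_{\max}/n)$, not a constant fraction of $x_{\max}$.
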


The two constructions behind the theorem already reveal the direction of the separation.

\begin{theorem}\label{thm:pair}
Let $F=K_1\vee\mu(K_3)$. There exist $F$-free, non-$4$-partite graphs $A_n$ and $B_n$ of order $n$ such that $e(A_n)=t_4(n)-n/2+O(1)$, $e(B_n)=t_4(n)-3n/4+O(1)$, $\rho(A_n)=3n/4-5/6+O(n^{-1})$, and $\rho(B_n)=3n/4-3/4+O(n^{-1})$. Consequently, $e(A_n)>e(B_n)$ and $\rho(A_n)<\rho(B_n)$ for all sufficiently large $n$.
\end{theorem}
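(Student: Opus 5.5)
We prove the statement by writing down two explicit, nearly Tur\'an graphs and computing their edge counts and spectral radii. We fix $V(T_{n,4})=V_1\cup\dots\cup V_4$ with $|V_i|\approx n/4$. The guiding principle is that a non-$4$-partite $F$-free graph must contain a small local obstruction to $4$-coloring. That obstruction can be paid for in two ways. One can delete about $n/2$ edges spread over a bounded number of high-degree vertices (graph $A_n$). Or one can delete about $3n/4$ edges all incident to a single vertex, which is essentially removed from the Tur\'an structure (graph $B_n$). In both cases, the first step in checking $F$-freeness is the same observation. Since $F=K_1\vee\mu(K_3)$ and $\chi(\mu(K_3))=4$, a copy of $F$ needs a vertex $x$ whose neighbourhood $G[N(x)]$ contains $\mu(K_3)$. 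In particular $G[N(x)]$ must be non-$3$-colorable. So it suffices to exhibit, for every vertex $x$, either a $3$-coloring of $G[N(x)]$ or a direct reason (triangle-freeness and degree count of the $11$-vertex Gr\"otzsch graph, which is $4$-critical with minimum degree $3$) why $\mu(K_3)\not\subseteq G[N(x)]$.

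\textbf{Construction of $B_n$.} Start from $T_{n-1,4}$ and add one vertex $w$ of bounded degree. The neighbours of $w$ are chosen in all four parts, so that $w$ together with them forms a bounded non-$4$-colorable gadget, for instance a $K_5$ through $w$. If necessary, delete $O(1)$ further edges around the gadget so that no neighbourhood $G[N(x)]$ contains a $4$-critical subgraph. Then $e(B_n)=t_4(n-1)+O(1)=t_4(n)-3n/4+O(1)$. Since $B_n$ is $T_{n-1,4}$ plus a bounded perturbation, interlacing and a Rayleigh-quotient lower bound give $\rho(B_n)=\rho(T_{n-1,4})+O(n^{-1})$. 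Here $\rho(T_{n-1,4})=3(n-1)/4+O(n^{-1})$, which gives $3n/4-3/4+O(n^{-1})$.

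\textbf{Construction of $A_n$.} Keep all $n$ vertices in the Tur\'an frame. Create the obstruction with $O(1)$ extra edges inside parts, e.g.\ an edge $uv\subseteq V_1$ together with a bounded configuration. Then delete the cross edges that would otherwise create $\mu(K_3)$ in some neighbourhood. The deletions should amount to two "half-rows": about $n/4$ edges from each of two distinguished vertices to whole parts. This gives $e(A_n)=t_4(n)-n/2+O(1)$. To compute $\rho(A_n)$, pass to the equitable partition consisting of the four large parts (minus the distinguished vertices) and the $O(1)$ special vertices. Compute the largest root of the quotient matrix and expand it in powers of $1/n$. The large-part entries give $3n/4$; the eigenvector entries of the special vertices are of order $1$ relative to those of typical vertices. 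Expanding the characteristic equation to first order then yields the constant term $-5/6$. This exceeds the $-3/4$ of $B_n$ in absolute value: deleting edges spread over two vertices of large eigenvector weight costs more spectrally than isolating one vertex.

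The main obstacle is making the two gadgets precise so that $F$-freeness can really be verified. For $B_n$, the neighbourhoods $N(x)$ of the vertices adjacent to $w$ are $3$-partite plus $w$; I must check that $\mu(K_3)$, being $4$-critical, cannot use $w$ as its unique "fourth colour" vertex. If it can, I would trim $w$'s adjacencies so that its neighbours in each part form an independent set that can be recoloured. For $A_n$, I would first pick the minimal edge set whose deletion makes every $G[N(x)]$ $3$-colorable, then count it. The final inequalities $e(A_n)>e(B_n)$ and $\rho(A_n)<\rho(B_n)$ follow at once from the asymptotics, since $-n/2>-3n/4$ and $-5/6<-3/4$ with $O(n^{-1})$ errors.
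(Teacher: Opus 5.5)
\paragraph{Main gap: $A_n$ is never constructed, and your verification route cannot work for it.}
For an existence theorem, the explicit graphs and the checks that they are $F$-free and not $4$-colorable are the proof, and your proposal leaves both open. For $A_n$ the missing idea is a concrete forcing gadget. An example is the paper's four vertices $a,b,c,d$ with edges $ab,ac,bd$ and $a\sim Q_3\cup Q_4$, $b\sim Q_2\cup Q_4$, $c\sim Q_1\cup Q_3\cup Q_4$, $d\sim Q_1\cup Q_2\cup Q_4$. Here $c,d$ are forced to colors $2,3$, which forces both $a$ and $b$ to color $1$.

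More seriously, your plan to verify $F$-freeness by $3$-coloring every neighbourhood cannot succeed. For $v\in Q_4$ the same forcing runs inside $N(v)$, with $Q_1,Q_2,Q_3$ as the three color classes, so $G[N(v)]$ is $4$-chromatic. This is forced, not an accident of the gadget. Suppose two adjacent special vertices have two-element lists and all other special vertices are forced. Then $F$-freeness (via the embeddings in the paper's defect lemma) makes the lists $\{\gamma,\alpha\}$ and $\{\gamma,\beta\}$, with forced neighbours blocking $\alpha$ and $\beta$. So the obstruction uses at most three colors, and it lies inside the neighbourhood of every vertex of the part carrying the fourth color.

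Consequently, among graphs of the shape you describe, choosing a minimal deletion set that makes every neighbourhood $3$-colorable pushes the total defect to at least $3$. That gives $t_4(n)-3n/4+O(1)$ edges, not $t_4(n)-n/2+O(1)$. What is actually needed is a direct proof that this $4$-chromatic neighbourhood is $\mu(K_3)$-free. The paper does this by classifying the triangles of $N(v)$ into types $Q_1Q_2Q_3$, $Q_1Q_2d$, $Q_2bd$, up to the symmetry $Q_2\leftrightarrow Q_3$, $a\leftrightarrow b$, $c\leftrightarrow d$. It then shows that the three shadow sets never admit a common root. Your fallback does not supply this. $\mu(K_3)$ is the $7$-vertex Mycielskian of a triangle, not the $11$-vertex Gr\"otzsch graph $\mu(C_5)$. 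It contains a triangle, so triangle-freeness is unavailable as an obstruction.

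\paragraph{Your sample gadget for $B_n$ contains $F$.}
Join $w$ to $c_i\in V_i$ for $i\in[4]$, forming a $K_5$. Then $N(c_1)$ contains $\mu(K_3)$ with root $w$, shadow vertices $c_2,c_3,c_4$, and triangle $v_2v_3v_4$ with $v_i\in V_i\setminus\{c_i\}$. So $c_1$ is the apex of a copy of $F$.

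Trimming $w$'s own edges cannot repair this, since $w$ must see all four colors. What must be deleted are the six edges $c_ic_j$, while each $c_i$ stays complete to the other three parts. Then each $c_i$ is still forced to color $i$, and every neighbourhood is $3$-colorable:
\begin{itemize}
\item $N(w)$ is independent;
\item $N(c_i)$ is complete $3$-partite plus the isolated vertex $w$;
\item for $v\in V_i\setminus\{c_i\}$, $N(v)$ is the union of the independent parts $V_j$, $j\ne i$.
\end{itemize}
This is exactly the paper's blow-up of $\mu(K_4)$.

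For that graph your estimates $e(B_n)=t_4(n-1)+O(1)$ and $\rho(B_n)=\rho(T_{n-1,4})+O(n^{-1})$ are correct. However, interlacing and a Rayleigh quotient give only the lower bound on $\rho(B_n)$; the upper bound needs a separate argument. The paper obtains both spectral radii from the equitable-quotient expansion $\rho=\tfrac34N+\tfrac1{12}\sum_{v\in S}k_v^2+O(N^{-1})$. Your quotient-matrix computation would reproduce this once $A_n$ is specified, with $k=(2,2,3,3)$ giving the constant $-5/6$.
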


Thus the graph preferred by the spectral radius loses a linear number of edges. The source of the discrepancy is nevertheless finite. After removing a bounded exceptional set, every spectral-extremal graph, and a suitably symmetrized edge-extremal graph, has a balanced complete $4$-partite core. The exceptional vertices carry the obstruction to $4$-colorability. If $\delta_s$ measures the number of additional colors available to an exceptional vertex $s$, then the edge loss is linear in $\sum_s\delta_s$, whereas the first correction to the spectral radius is governed by
$\sum_s\frac{\delta_s(6-\delta_s)}{12}.$
The latter function is concave. The edge problem prefers two defects of size $1$; the spectral problem prefers one defect of size $3$.

The main technical issue is to show that no irregular graph outside this template family can improve either objective. On the edge side, Simonovits's theory under additional chromatic conditions already supplies a broad finite symmetrized description \cite{SimonovitsAdditional}. What is needed here is a sharper $F$-specific template that is shared by the edge and spectral objectives and preserves a fixed obstruction to $4$-colorability. We quote the near-Tur\'an cleaning and minimum-Perron-coordinate deletion inputs of O and Wu; their stability input is based on work of Desai et al.~\cite{DesaiEtAl,OWu}. We also quote their heavy-set Perron-coordinate estimate and prove only the alignment statement not covered by their formal hypotheses. The remaining bounded-certificate reduction uses classical replacement and symmetrization ideas. Its structural antecedents include Zykov's method \cite{Zykov}, strong Tur\'an stability and twin reductions \cite{TyomkynUzzell}, and recent bounded blow-up arguments in a related spectral problem \cite{ZhuLin}. Related bounded-template viewpoints also appear in Zhang's spectral-skeleton framework \cite{ZhangSkeletons} and in work on blowup thresholds \cite{HuangLiuRong}. None of these results directly supplies the certificate-preserving four-partite reduction needed here. We therefore prove that reduction, balance the four large classes up to a constant, and reduce the unrestricted problem to a finite template optimization.

The paper has five sections. After this introduction, Section~2 introduces the forbidden graph and the two competing constructions. Section~3 develops the bounded-template expansion and solves the resulting finite optimization problem. \Cref{sec:global-reduction} proves the global reduction from arbitrary extremal graphs to bounded uniform templates. Section~5 completes the proof of the main theorem and discusses the scope of the counterexample.

\section{The forbidden graph and the two competing constructions}

\subsection{The forbidden graph}

Let $M=\mu(K_3)$ be the Mycielskian of a triangle, following the construction introduced in \cite{Mycielski}. We use the vertex set
$V(M)=\{x_1,x_2,x_3,y_1,y_2,y_3,w\}.$
The vertices $x_1,x_2,x_3$ induce a triangle, $y_i$ is adjacent to $x_j$ exactly when $i\ne j$, and $w$ is adjacent to $y_1,y_2,y_3$. Let
$F=K_1\join M,$
and denote the additional complete vertex by $z$.

Lam et al.~proved that $M=\mu(K_3)$ is edge-color-critical with chromatic number $4$ \cite{LamLinGuSong}. Hou et al.~proved that if $uv$ is a critical edge of a graph $H$, then each of the two cone edges incident with $u$ and $v$ is critical in $K_1\vee H$ \cite[Lemma~3.3]{HouLiZeng}. Together with $\chi(K_1\vee H)=\chi(H)+1$, these cited results immediately give the following proposition.

\begin{proposition}\label{prop:Fcritical}
The graph $F$ is edge-color-critical and $\chi(F)=5$.
\end{proposition}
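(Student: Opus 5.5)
We establish $\chi(F)=5$ and then exhibit an explicit critical edge. For the chromatic number, note that in any proper coloring of $K_1\join H$ the cone vertex $z$ is adjacent to every vertex of $H$. Its color therefore appears nowhere in $H$, which gives $\chi(K_1\join H)=\chi(H)+1$. With $\chi(M)=4$, this yields $\chi(F)=5$. We take $\chi(M)=4$ from \cite{LamLinGuSong}, but it is also easy to check directly. For $\chi(M)\le4$, color $x_i$ and $y_i$ with color $i$ and give $w$ color $4$; this is proper because $y_i$ is adjacent only to $x_j$ with $j\ne i$ and to $w$. For $\chi(M)\ge4$, use the standard Mycielski argument. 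Suppose $M$ had a proper $3$-coloring, and let $a$ be the color of $w$. Recolor each $x_i$ of color $a$ with the color of $y_i$. Since $y_i$ is adjacent to the neighbours $x_j$ ($j\ne i$) of $x_i$ inside the triangle, and $y_i$ avoids color $a$, this gives a proper $2$-coloring of the triangle, which is a contradiction.

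For criticality we exhibit the edge directly rather than invoke \cite[Lemma~3.3]{HouLiZeng} abstractly. First find a critical edge $uv$ of $M$. A natural candidate is $x_1x_2$. In $M-x_1x_2$, give $x_1$ and $x_2$ color $1$ and $x_3$ color $2$. The vertex $y_3$ is adjacent to $x_1,x_2$, so give it color $2$. The vertices $y_1$ and $y_2$ are each adjacent to $x_3$ and to one of $x_1,x_2$, so both get color $3$. Then $w$, adjacent to $y_1,y_2,y_3$ with colors $3,3,2$, gets color $1$. This is a proper $3$-coloring, so $x_1x_2$ is critical in $M$. If that check were to fail, edges of type $y_ix_j$ or $wy_i$ would be the next candidates.

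Next lift this to $F$. The target is that the cone edge $zx_1$ is critical. In $F-zx_1$, take the $3$-coloring of $M-x_1x_2$ above. Recolor $x_1$ with a new color $4$, and give $z$ color $1$. Now $x_1$ is the only vertex of color $4$. The color-$1$ class of $M$ was $\{x_1,x_2,w\}$, so after recoloring $z$ shares color $1$ only with $x_2$ and $w$. This is not allowed, since $z$ is adjacent to both. I therefore adjust as follows, in the spirit of \cite{HouLiZeng}. Keep the $3$-coloring of $M-x_1x_2$ with colors $\{1,2,3\}$ and recolor $x_1$ to color $4$, making $M$ properly $4$-colored with $x_1$ alone in class $4$. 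Then give $z$ color $4$. Since $zx_1$ has been deleted, $z$ is adjacent only to vertices of colors $1,2,3$. Thus $F-zx_1$ is $4$-colorable, which is strictly fewer than $\chi(F)=5$, so $zx_1$ is a critical edge of $F$.

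The only delicate point is this lifting step: the deleted cone edge must pair $z$ with a vertex that forms a singleton color class in some $4$-coloring of $M$. The critical edge $x_1x_2$ supplies exactly such a coloring. Everything else is routine verification. Combining these steps, $F$ is edge-color-critical with $\chi(F)=5$, consistent with the cited results \cite{LamLinGuSong,HouLiZeng}.
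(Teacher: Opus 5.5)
Your proof is correct and follows the same structure as the paper's. Both arguments combine $\chi(K_1\vee H)=\chi(H)+1$ with a critical edge of $M=\mu(K_3)$ and then take a critical cone edge of $F$ at one endpoint of that edge. The paper cites Lam et al.\ for $\chi(M)=4$ and the criticality of $M$, and Hou et al.\ (Lemma~3.3) for the lifting; you instead verify all three by explicit colorings (the $3$-coloring of $M-x_1x_2$, and the $4$-coloring of $F-zx_1$ with $\{x_1,z\}$ as one class), which is exactly the cited lemma specialized to this case and makes the proposition self-contained.
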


\paragraph{Neighborhood criterion.}
Since $F=K_1\join M$, every copy of $F$ has a vertex whose neighborhood contains a copy of $M$. Consequently, $G$ is $F$-free whenever every neighborhood $G[N_G(v)]$ is $M$-free; in particular, it is enough that every vertex neighborhood be $3$-colorable.

\subsection{The edge-favored construction}

Let $Q_1,Q_2,Q_3,Q_4$ be nonempty independent sets, and add all edges between distinct $Q_i$ and $Q_j$. Add four vertices $a,b,c,d$. Inside $\{a,b,c,d\}$ add the edges $ab$, $ac$, and $bd$.
Their adjacencies to the large classes are
\begin{align*}
a&\sim Q_3\cup Q_4,\\
b&\sim Q_2\cup Q_4,\\
c&\sim Q_1\cup Q_3\cup Q_4,\\
d&\sim Q_1\cup Q_2\cup Q_4.
\end{align*}
There are no other edges incident with $a,b,c,d$.

Write $A(Q_1,Q_2,Q_3,Q_4)$ for the resulting graph. For each $n$, choose the four class sizes to maximize the number of edges subject to $\sum_{i=1}^4|Q_i|=n-4$, and call the resulting graph $A_n$.

\begin{proposition}\label{prop:Achi}
The graph $A_n$ has chromatic number $5$.
\end{proposition}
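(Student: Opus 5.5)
We show $\chi(A_n)\le 5$ by writing down an explicit $5$-coloring, and $\chi(A_n)\ge 5$ by showing that no proper $4$-coloring exists. Both steps use only the adjacencies listed in the construction and the fact that every $Q_i$ is nonempty.

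\emph{Upper bound.} Give every vertex of $Q_i$ color $i$ for $i=1,2,3,4$. Then set $a\mapsto 1$, $c\mapsto 2$, $d\mapsto 3$ and $b\mapsto 5$. We check each exceptional vertex against its neighbors in the large classes:
\begin{itemize}[nosep]
\item $a$ sees only colors $3,4$ there;
\item $c$ sees only colors $1,3,4$;
\item $d$ sees only colors $1,2,4$;
\item $b$ has color $5$, which no $Q_i$ uses.
\end{itemize}
The three internal edges get the color pairs $ab:(1,5)$, $ac:(1,2)$ and $bd:(5,3)$, all distinct. So this is a proper $5$-coloring.

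\emph{Lower bound.} Suppose $A_n$ has a proper coloring with colors $\{1,2,3,4\}$. Since the $Q_i$ are nonempty and pairwise completely joined, the color sets $C(Q_1),\dots,C(Q_4)$ are nonempty and pairwise disjoint. With only four colors available, each $C(Q_i)$ is a single color, and after renaming we may take $C(Q_i)=\{i\}$. The exceptional vertices are then forced as follows.
\begin{itemize}[nosep]
\item $c\sim Q_1\cup Q_3\cup Q_4$, so $c$ has color $2$.
\item $d\sim Q_1\cup Q_2\cup Q_4$, so $d$ has color $3$.
\item $a\sim Q_3\cup Q_4$ and $a\sim c$, so $a\in\{1,2\}\setminus\{2\}$, that is, $a$ has color $1$.
\item $b\sim Q_2\cup Q_4$ and $b\sim d$, so $b\in\{1,3\}\setminus\{3\}$, that is, $b$ has color $1$.
\end{itemize}
This contradicts the edge $ab$. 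Hence $\chi(A_n)=5$.

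There is no real obstacle here. The only point that needs care is the step forcing each $Q_i$ to be monochromatic: it uses that all four classes are nonempty and pairwise complete, which the construction guarantees.
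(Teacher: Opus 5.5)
Your proof is correct and follows the paper's argument: each $Q_i$ is forced to be monochromatic with distinct colors, which forces $c$ and $d$ to colors $2$ and $3$ and then both $a$ and $b$ to color $1$, contradicting $ab$. For the explicit $5$-coloring, the paper gives the fifth color to $a$ (with $b=1$) instead of to $b$, and this difference is immaterial.
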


\begin{proof}
In a proper $4$-coloring, each of the four nonempty classes $Q_1,Q_2,Q_3,Q_4$ must be monochromatic, and the four classes must receive distinct colors. Label these colors $1,2,3,4$, respectively.
The vertex $c$ is adjacent to $Q_1,Q_3,Q_4$, so it is forced to use color $2$. Similarly, $d$ is forced to use color $3$. Since $ac$ and $bd$ are edges, $a$ and $b$ are both forced to use color $1$, contradicting $ab\in E(A_n)$. Hence $\chi(A_n)>4$.

A proper $5$-coloring is obtained by coloring $Q_i$ with color $i$, taking $c=2$, $d=3$, $b=1$, and giving $a$ a fifth color. Thus $\chi(A_n)=5$.
\end{proof}

\begin{proposition}\label{prop:Afree}
The graph $A_n$ is $F$-free.
\end{proposition}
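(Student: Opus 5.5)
We will use the neighborhood criterion from the start of this section: it suffices to check that $G[N(v)]$ is $M$-free for every vertex $v$ of $A_n$. For all vertices except those of $Q_4$ we will show the neighborhood is $3$-colorable. For $v\in Q_4$ the neighborhood is not $3$-colorable, so there we will use a direct counting argument instead.

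\textbf{Step 1: the easy neighborhoods.} We list each neighborhood and give an explicit $3$-coloring.
\begin{itemize}
\item For $v\in Q_1$, the neighborhood is $Q_2\cup Q_3\cup Q_4\cup\{c,d\}$. Color $Q_2,Q_3,Q_4$ with $1,2,3$, and set $c=1$ and $d=2$. Here $c\sim Q_3\cup Q_4$ and $d\sim Q_2\cup Q_4$ inside this set, and $cd\notin E$.
\item For $v\in Q_2$, the neighborhood is $Q_1\cup Q_3\cup Q_4\cup\{b,d\}$. Color the classes $1,2,3$, and set $d=2$ and $b=1$.
\item For $v\in Q_3$, the neighborhood is $Q_1\cup Q_2\cup Q_4\cup\{a,c\}$. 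Color the classes $1,2,3$, and set $c=2$ and $a=1$.
\item For $v=a$, the neighborhood is $Q_3\cup Q_4\cup\{b,c\}$. Use $Q_3=1$, $Q_4=2$, $c=3$ and $b=1$.
\item For $v=b$, the neighborhood is $Q_2\cup Q_4\cup\{a,d\}$. Use $Q_2=1$, $Q_4=2$, $d=3$ and $a=1$.
\item For $v=c$, the neighborhood is $Q_1\cup Q_3\cup Q_4\cup\{a\}$. Give $a$ the color of $Q_1$.
\item For $v=d$, the neighborhood is $Q_1\cup Q_2\cup Q_4\cup\{b\}$. Give $b$ the color of $Q_1$.
\end{itemize}
Each verification only requires reading off the adjacency list of the construction.

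\textbf{Step 2: the main case $v\in Q_4$.} Here $H=G[N(v)]$ consists of the complete tripartite graph on $Q_1,Q_2,Q_3$ together with $a,b,c,d$. Coloring $Q_1,Q_2,Q_3$ with $1,2,3$ forces $c=2$, $d=3$ and $a=1$, which leaves no color for $b$. So we argue $M$-freeness directly.

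Suppose $M'\cong M$ lies in $H$, and recall $e(M)=12$. Let $S=V(M')\cap\{a,b,c,d\}$ and $T=V(M')\setminus S$, so $|S|+|T|=7$.

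We bound the edges of $M'$ by the following three estimates.
\begin{itemize}
\item $H[\{a,b,c,d\}]$ is the path $c\,a\,b\,d$, so at most $|S|-1$ edges lie inside $S$ (when $S\neq\varnothing$).
\item $H[\bigcup_{i\le 3}Q_i]$ is $K_4$-free, so $T$ spans at most $t_3(|T|)$ edges.
\item Each vertex of $Q_1\cup Q_2\cup Q_3$ has exactly two neighbors in $\{a,b,c,d\}$: $Q_1$ sees $c,d$, $Q_2$ sees $b,d$, and $Q_3$ sees $a,c$. Hence at most $2|T|$ edges join $T$ to $S$.
\end{itemize}
Running over $|S|\in\{0,\dots,4\}$, these bounds give totals $\le 16$, $14$, $13$, $12$, $12$ for $|T|=7,6,5,4,3$ respectively. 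For $|T|\ge5$ this counting alone is not enough. We will instead use that $M$ contains only one triangle and that $H[T]$ is complete multipartite with at most three parts. Alternatively, and more simply, we note that $\chi(M)=4$ and $M$ is $4$-critical. For each $s\in\{a,b,c,d\}$, the graph $H-s$ is $3$-colorable:
\begin{itemize}
\item without $a$, take $b=1$, $c=2$, $d=3$;
\item without $b$, take $a=1$, $c=2$, $d=3$;
\item without $c$, take $b=1$, $d=3$, $a=2$;
\item without $d$, take $a=1$, $c=2$, $b=3$.
\end{itemize}
Therefore $S=\{a,b,c,d\}$ and $|T|=3$, and equality in the count $3+3+6=12$ is then forced. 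Thus $T$ has exactly one vertex in each of $Q_1,Q_2,Q_3$, and $M'$ is all of $H[S\cup T]$.

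\textbf{Step 3: ruling out the equality case.} In this $7$-vertex graph, the three $Q$-vertices have degree $4$ and $a,b,c,d$ have degree $3$. This matches the degree sequence of $M$, where the $x_i$ have degree $4$ and $y_1,y_2,y_3,w$ have degree $3$. However, in $M$ the degree-$3$ vertices induce the star $K_{1,3}$ centered at $w$, whereas here they induce the path $c\,a\,b\,d$. Hence the two graphs are not isomorphic, which is a contradiction.

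The main obstacle is Step 2. The criticality reduction that forces all of $a,b,c,d$ into the copy is the key point, and the exact equality analysis after it is what I would verify most carefully.
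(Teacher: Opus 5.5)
Your proof is correct. Step~1 is the same as the paper's: it uses the same explicit $3$-colorings of the neighborhoods of vertices outside $Q_4$. For $v\in Q_4$, however, you take a genuinely different route.

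The paper enumerates the triangles of $H$ up to the automorphism $Q_2\leftrightarrow Q_3$, $a\leftrightarrow b$, $c\leftrightarrow d$. This leaves the types $Q_1Q_2Q_3$, $Q_1Q_2d$ and $Q_2bd$. Each is treated as the candidate triangle $x_1x_2x_3$ of a copy of $M$, and the paper computes the sets $C_i$ in which the shadow vertices must lie. It then exhibits a partition of $V(H)$ into three sets, each anticomplete to the corresponding $C_i$, so no root can exist; for $Q_2bd$ one $C_i$ is already empty.

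You instead show that $H-s$ is $3$-colorable for every $s\in\{a,b,c,d\}$. By $\chi(M)=4$, any copy of $M$ must then contain all four special vertices. The tight count $3+3+6=12=e(M)$ forces the copy to be the induced graph on $\{a,b,c,d\}$ together with a transversal triangle of $Q_1,Q_2,Q_3$. That graph is not $M$, because its degree-$3$ vertices induce the path $cabd$ rather than $K_{1,3}$.

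Your argument is shorter and avoids the case analysis. It relies on two features specific to this $H$: each special vertex is essential for non-$3$-colorability, and the available edge budget equals $e(M)$ exactly. The paper's triangle--shadow--root analysis works directly from the local structure of the Mycielskian and does not need $\chi(M)=4$. It would therefore adapt to neighborhoods that are further from $3$-colorable.

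Two small clean-ups. First, only $\chi(M)=4$ is used, not $4$-criticality. Second, the preliminary tallies $16,14,13,12,12$ do not follow from your three stated estimates; for instance, with $|S|=1$ and $|T|=6$ they give $0+12+12=24$. Since the $H-s$ argument makes that paragraph unnecessary, it should simply be deleted.
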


\begin{proof}
By the neighborhood criterion above, it is enough to prove that every vertex
neighborhood is $M$-free.

First consider a vertex outside $Q_4$.  The following displayed triples are
partitions of the indicated neighborhoods into independent sets:
\begin{align*}
N(v),\ v\in Q_1:&\quad Q_2\cup\{c\},\quad Q_3\cup\{d\},\quad Q_4,\\
N(v),\ v\in Q_2:&\quad Q_1\cup\{b\},\quad Q_3\cup\{d\},\quad Q_4,\\
N(v),\ v\in Q_3:&\quad Q_1\cup\{a\},\quad Q_2\cup\{c\},\quad Q_4.
\end{align*}
The neighborhoods of $a,b,c,d$ are respectively partitioned by
\begin{align*}
&Q_3\cup\{b\},\quad Q_4,\quad \{c\};\\
&Q_2\cup\{a\},\quad Q_4,\quad \{d\};\\
&Q_1\cup\{a\},\quad Q_3,\quad Q_4;\\
&Q_1\cup\{b\},\quad Q_2,\quad Q_4.
\end{align*}
Thus all these neighborhoods are $3$-colorable and hence $M$-free.

It remains to take $v\in Q_4$ and put $H=A_n[N(v)]$. The map interchanging $Q_2\leftrightarrow Q_3$, $a\leftrightarrow b$, and $c\leftrightarrow d$, and fixing $Q_1$, is an automorphism of $H$.  Hence, up to this
symmetry, every triangle of $H$ has one of the three types
$Q_1Q_2Q_3$, $Q_1Q_2d$ and $Q_2bd$.

Let $T=t_1t_2t_3$ be such a triangle.  If $T$ is the triangle
$x_1x_2x_3$ in a copy of $M$, then the shadow vertex corresponding to $t_i$ must lie in $C_i=(N_H(t_j)\cap N_H(t_k))\setminus V(T)$, where $\{i,j,k\}=\{1,2,3\}$, and the root must be adjacent to one vertex of each $C_i$.

For $T$ of type $Q_1Q_2Q_3$, label $t_i\in Q_i$ for
$i\in[3]$.  Then $C_1=Q_1\setminus\{t_1\}$, $C_2=(Q_2\setminus\{t_2\})\cup\{c\}$, and $C_3=(Q_3\setminus\{t_3\})\cup\{d\}$. The three sets $Q_1\cup\{a,b\}$, $Q_2\cup\{c\}$, and $Q_3\cup\{d\}$ partition $V(H)$, and each is anticomplete to the correspondingly
indexed set $C_i$.  Thus no root exists.

For $T$ of type $Q_1Q_2d$, label $t_1\in Q_1$, $t_2\in Q_2$ and
$t_3=d$.  Here $C_1=(Q_1\setminus\{t_1\})\cup\{b\}$, $C_2=Q_2\setminus\{t_2\}$, and $C_3=Q_3$. The partition into $Q_1\cup\{b\}$, $Q_2\cup\{a,c\}$, and $Q_3\cup\{d\}$ gives the same obstruction.  Finally, for $T$ of type $Q_2bd$, label
$t_1\in Q_2$, $t_2=b$ and $t_3=d$.  Then
$C_3=(N_H(t_1)\cap N_H(t_2))\setminus V(T)=\varnothing$, so even the
three shadow vertices cannot be chosen.  By symmetry this also covers
the types $Q_1Q_3c$ and $Q_3ac$.  Therefore $H$ is $M$-free, and hence
$A_n$ is $F$-free.
\end{proof}

\begin{proposition}\label{prop:Aedges}
The edge count of $A_n$ satisfies $e(A_n)=t_4(n)-n/2+O(1)$.
\end{proposition}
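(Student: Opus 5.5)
We count the edges of $A(Q_1,Q_2,Q_3,Q_4)$ directly and then optimize over the class sizes. Write $q_i=|Q_i|$ and $m=n-4=\sum_i q_i$. The edges split into three groups:
\begin{itemize}
\item the complete $4$-partite part on the large classes, with $\sum_{i<j}q_iq_j$ edges;
\item the three edges $ab,ac,bd$ inside $\{a,b,c,d\}$;
\item the edges from $a,b,c,d$ to the large classes, numbering
\[
(q_3+q_4)+(q_2+q_4)+(q_1+q_3+q_4)+(q_1+q_2+q_4)=2q_1+2q_2+2q_3+4q_4 .
\]
\end{itemize}
Hence
\[
e(A(Q_1,\dots,Q_4))=\sum_{i<j}q_iq_j+2m+2q_4+3 .
\]

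For the lower bound, take all $q_i=m/4+O(1)$. Then $\sum_{i<j}q_iq_j=t_4(m)+O(1)=\tfrac38 m^2+O(1)$ and $2q_4=m/2+O(1)$. Since $m=n-4$, we have $\tfrac38 m^2=\tfrac38 n^2-3n+O(1)$, while $t_4(n)=\tfrac38 n^2+O(1)$. Therefore
\[
e=\tfrac38 n^2-3n+2n+\tfrac n2+O(1)=t_4(n)-\tfrac n2+O(1).
\]
Because $A_n$ maximizes the edge count over class sizes, this gives $e(A_n)\ge t_4(n)-n/2+O(1)$.

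For the matching upper bound, write $q_i=m/4+\varepsilon_i$ with $\sum_i\varepsilon_i=0$. Then
\[
\sum_{i<j}q_iq_j=\tfrac38 m^2-\tfrac12\sum_i\varepsilon_i^2,
\]
so the edge count equals $\tfrac38 m^2+2m+m/2+2\varepsilon_4-\tfrac12\sum_i\varepsilon_i^2+3$. The expression $2\varepsilon_4-\tfrac12\sum_i\varepsilon_i^2$ is a concave quadratic. It is bounded above by an absolute constant: by completing the square in $\varepsilon_4$, it is at most $2$. Hence every choice of class sizes gives at most $t_4(n)-n/2+O(1)$ edges, and in particular $e(A_n)\le t_4(n)-n/2+O(1)$.

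There is no real obstacle here. The only care needed is bookkeeping: the shift from $m=n-4$ to $n$ contributes $-3n$, and it combines with the $+2m+m/2$ from the attachments to give the net $-n/2$. We could also note that the optimum has $q_4$ larger than the other classes by about $3/2$, but this affects only the $O(1)$ term.
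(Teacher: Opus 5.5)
Your proposal is correct and follows essentially the same route as the paper: the same direct count $e=\sum_{i<j}q_iq_j+2(n-4)+2q_4+3$, followed by expanding the quadratic around $q_i=(n-4)/4$. The one small difference is in the upper bound: you bound $2\varepsilon_4-\frac12\sum_i\varepsilon_i^2\le 2$ for every choice of class sizes, whereas the paper just asserts that the maximizing vector is balanced up to $O(1)$.
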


\begin{proof}
Let $q_i=|Q_i|$ and $N=n-4$. Directly from the definition,
\begin{align*}
e(A_n)
&=\sum_{1\le i<j\le4}q_iq_j
  +2(q_1+q_2+q_3)+4q_4+3\\
&=\frac12\left(N^2-\sum_{i=1}^4q_i^2\right)+2N+2q_4+3.
\end{align*}
The maximizing integer vector has
$q_i=\frac N4+O(1)$
for every $i$; more precisely, the continuous optimum has deviations
$(-\tfrac12,-\tfrac12,-\tfrac12,\tfrac32)$ from $N/4$.
Therefore
$e(A_n)=\frac{3N^2}{8}+\frac{5N}{2}+O(1)=\frac{3n^2}{8}-\frac n2+O(1).$
Since $t_4(n)=3n^2/8+O(1)$, the result follows.
\end{proof}

\subsection{The spectral-favored construction}

Let $P_1,P_2,P_3,P_4$ be nonempty independent sets, complete between distinct classes. Add vertices $c_1,c_2,c_3,c_4,w$. For $i\in[4]$, join $c_i$ to every vertex in $P_j$ if and only if $i\ne j$, and join $w$ to all four vertices $c_i$. There are no other edges involving the five new vertices. Denote the graph by
$B(P_1,P_2,P_3,P_4).$
For $B_n$, choose the $P_i$ as equally as possible subject to
$\sum_{i=1}^4|P_i|=n-5.$
Equivalently, $B_n$ is obtained from the Mycielskian $\mu(K_4)$ by
blowing up its four original vertices into the nonempty independent
sets $P_1,\ldots,P_4$. Mycielski's theorem gives $\chi(\mu(K_4))=5$
\cite{Mycielski}, and replacing a vertex by a nonempty independent class
of twins preserves the chromatic number. Hence we may record the following
direct consequence without a separate proof.

\begin{proposition}\label{prop:Bchi}
The graph $B_n$ has chromatic number $5$.
\end{proposition}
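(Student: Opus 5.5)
The plan is to give a direct proof that $\chi(B_n)=5$ rather than relying only on the Mycielski citation. I will show separately that $\chi(B_n)\le 5$ and that $\chi(B_n)>4$.

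\emph{Upper bound.} Color each $P_i$ with color $i$ and each $c_i$ with color $i$. This is proper, because $c_i$ is adjacent only to $P_j$ for $j\ne i$ and to $w$, and the $c_i$ are pairwise nonadjacent. Finally give $w$ a fifth color. Hence $\chi(B_n)\le 5$.

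\emph{Lower bound.} Suppose $B_n$ had a proper $4$-coloring. The vertices $c_i$ and $w$ are the only ones outside the large classes, and the large classes are pairwise complete. So, exactly as in the proof of \Cref{prop:Achi}, each nonempty class $P_i$ must be monochromatic, and the four classes receive four distinct colors; label them $1,2,3,4$. The vertex $c_i$ is adjacent to every $P_j$ with $j\ne i$, so it sees the three colors other than $i$ and is forced to take color $i$. Therefore $c_1,\dots,c_4$ use all four colors. Since $w$ is adjacent to all four $c_i$, no color remains for $w$, a contradiction. Hence $\chi(B_n)=5$.

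The argument is short, and nothing in it is a real obstacle. The only point that needs care is the monochromaticity claim: two vertices $u,u'$ of the same $P_i$ must receive the same color. Suppose instead they received different colors. Together with one vertex from each of the other three classes, which are nonempty and pairwise complete, they would require at least five colors on a set in which every vertex outside $P_i$ is adjacent to all the others. Counting colors then shows that four colors do not suffice.

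This is equivalent to the twin-blow-up remark applied to $\mu(K_4)$, which the paper could cite instead.
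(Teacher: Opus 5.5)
Your proof is correct, but it takes a different route from the paper. The paper gives no separate argument. It identifies $B_n$ as the blow-up of the Mycielskian $\mu(K_4)$ in which the four original vertices are replaced by the nonempty independent classes $P_1,\dots,P_4$. It then combines Mycielski's theorem $\chi(\mu(K_4))=5$ with the fact that blowing up vertices into independent twin classes preserves the chromatic number: one copy of $\mu(K_4)$ sits in $B_n$ as an induced subgraph, and every coloring of $\mu(K_4)$ lifts to $B_n$.

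You instead verify both bounds directly. For the upper bound you give the explicit coloring $P_i,c_i\mapsto i$, $w\mapsto5$. For the lower bound you use a forcing argument: the classes are monochromatic with distinct colors, each $c_i$ is forced to color $i$, and no color is left for $w$. This is in effect Mycielski's argument for $K_4$ carried out inside the blow-up, in exact parallel with the proof of Proposition~\ref{prop:Achi}.

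The paper's version is shorter and makes the structural identification with $\mu(K_4)$ explicit. Yours is self-contained. It also displays exactly the list-coloring obstruction that Observation~\ref{obs:list-obstruction} and the template analysis later exploit. With $S=\{c_1,\dots,c_4,w\}$, the singleton lists $L(c_i)=\{i\}$ and the full list $L(w)=[4]$ on the star $B_n[S]\cong K_{1,4}$ form the concentrated defect-$3$ pattern that the spectral objective selects.
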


\begin{proposition}\label{prop:Bfree}
The graph $B_n$ is $F$-free.
\end{proposition}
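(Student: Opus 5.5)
Following the proof of \Cref{prop:Afree}, we use the neighborhood criterion: it suffices to show that $B_n[N(v)]$ is $M$-free for every vertex $v$, and for most vertices we show more strongly that the neighborhood is $3$-colorable.

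\textbf{Neighborhoods with explicit $3$-colorings.} The neighborhood of $w$ is $\{c_1,\dots,c_4\}$, an independent set. The neighborhood of $c_i$ is $\{w\}\cup\bigcup_{j\ne i}P_j$; this is three classes $P_j$ together with $w$, and $w$ is anticomplete to every $P_j$. Hence $\{P_j\cup\{w\}\}$ together with the other two $P_{j'}$ is a partition into three independent sets. By symmetry it remains to treat $v\in P_1$. Its neighborhood is $H=B_n[P_2\cup P_3\cup P_4\cup\{c_2,c_3,c_4\}]$, since $c_1$ is not adjacent to $P_1$. In $H$ the vertex $c_i$ is adjacent to $P_j$ exactly for $j\ne i$, $j\in\{2,3,4\}$. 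The set $\{c_2,c_3,c_4\}$ is independent. So $H$ is exactly $\mu(K_3)$ with the three original vertices blown up into $P_2,P_3,P_4$, but with the root $w$ missing. The partition $P_2\cup\{c_2\}$, $P_3\cup\{c_3\}$, $P_4\cup\{c_4\}$ is a proper $3$-coloring of $H$, because $c_i$ is non-adjacent to $P_i$ and the $c$'s are mutually non-adjacent. Hence $H$ is $M$-free.

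So every neighborhood, in fact, is $3$-colorable, and no case needs the finer triangle analysis used for $Q_4$ in \Cref{prop:Afree}.

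\textbf{Checking the adjacency rule.} The one point requiring care, and the only place an error could hide, is that $c_i\sim P_j$ holds iff $i\ne j$. This is what makes $P_i\cup\{c_i\}$ independent. It holds by definition. The conclusion then follows from the neighborhood criterion: any copy of $F$ has its cone vertex $z$ at some $v$ whose neighborhood contains $M$, which is impossible since $\chi(M)=4$ while every neighborhood is $3$-colorable.
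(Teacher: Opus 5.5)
Your proof is correct and follows the paper's argument: both apply the neighborhood criterion and show that every vertex neighborhood is $3$-colorable. The paper also uses the independent set $\{c_1,\dots,c_4\}$ for $w$, the classes $P_j$ together with the isolated vertex $w$ for $c_i$, and the partition $P_j\cup\{c_j\}$, $j\ne i$, for $v\in P_i$.
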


\begin{proof}
Every vertex neighborhood is $3$-colorable. The neighborhood of $w$ is the independent set $\{c_1,c_2,c_3,c_4\}$. The neighborhood of $c_i$ consists of a complete $3$-partite graph on the classes $P_j$, $j\ne i$, together with the isolated vertex $w$. If $v\in P_i$, then $N(v)=\bigcup_{j\ne i}(P_j\cup\{c_j\})$, and the three sets $P_j\cup\{c_j\}$, $j\ne i$, are independent. The neighborhood criterion above now gives the result.
\end{proof}

\begin{proposition}\label{prop:Bedges}
The edge count of $B_n$ satisfies $e(B_n)=t_4(n)-3n/4+O(1)$.
\end{proposition}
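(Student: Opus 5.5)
We count edges directly, following the template of \Cref{prop:Aedges}. Let $p_i=|P_i|$ and $N=n-5$, so that $\sum_{i=1}^4 p_i=N$ and each $p_i\in\{\lfloor N/4\rfloor,\lceil N/4\rceil\}$.

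The edges of $B_n$ fall into three groups.
\begin{itemize}[leftmargin=*]
\item The complete $4$-partite core on $P_1,\dots,P_4$ contributes $\sum_{i<j}p_ip_j$ edges.
\item Each $c_i$ is joined to every $P_j$ with $j\ne i$. Summed over $i$, this gives $\sum_i (N-p_i)=3N$ edges.
\item The four edges $wc_i$ contribute $4$.
\end{itemize}
There are no edges among the $c_i$, and none between $w$ and the $P_j$. Hence
\[
e(B_n)=\tfrac12\Bigl(N^2-\sum_{i=1}^4 p_i^2\Bigr)+3N+4.
\]

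Next we use the balance of the parts. Since every $p_i$ differs from $N/4$ by less than $1$, we have $\sum_i p_i^2=N^2/4+O(1)$. Substituting gives
\[
e(B_n)=\tfrac{3N^2}{8}+3N+O(1).
\]

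We now return to the variable $n$. Since $N=n-5$,
\[
\tfrac{3}{8}(n-5)^2=\tfrac{3n^2}{8}-\tfrac{15n}{4}+O(1),\qquad 3N=3n+O(1).
\]
Adding these gives
\[
e(B_n)=\tfrac{3n^2}{8}-\tfrac{3n}{4}+O(1).
\]

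Finally, $t_4(n)=3n^2/8+O(1)$, as already used in \Cref{prop:Aedges}. Therefore $e(B_n)=t_4(n)-3n/4+O(1)$.

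The only place requiring care is the bookkeeping of the linear terms, namely the $-15n/4$ coming from the shift $N=n-5$ against the $+3n$ contributed by the $c_i$. The rest is routine.
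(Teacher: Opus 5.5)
Your proposal is correct and follows the same route as the paper: the identical edge count $\sum_{i<j}p_ip_j+3N+4$, balancing the four parts to get $3N^2/8+3N+O(1)$, and then substituting $N=n-5$. Your write-up simply makes explicit the linear-term bookkeeping $-15n/4+3n=-3n/4$ that the paper compresses into a single line.
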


\begin{proof}
Put $p_i=|P_i|$ and $N=n-5$. Then $e(B_n)=\sum_{i<j}p_ip_j+3N+4$.
The sum of cross edges is maximized by balancing the four classes, so $e(B_n)=3N^2/8+3N+O(1)=3n^2/8-3n/4+O(1)$.
\end{proof}

\section{Bounded templates and finite optimization}

\subsection{A spectral expansion for bounded uniform templates}

We record a general first-order expansion. Let $S$ be a fixed graph of order $s$. Let $Q_1,\ldots,Q_r$ be independent sets, complete between distinct classes. Each vertex $v\in S$ is assumed either complete or anticomplete to each $Q_i$. Let $R\in\{0,1\}^{s\times r}$ be the incidence matrix, where $R_{vi}=1$ exactly when $v$ is complete to $Q_i$, and put $k_v=\sum_{i=1}^rR_{vi}$.

The partition into $Q_1,\ldots,Q_r$ and the singleton vertices of $S$ is equitable. In the asymptotic regime below, the component containing the complete multipartite core has spectral radius of order $N$, whereas every component contained in $S$ has bounded spectral radius. Permuting vertices within any $Q_i$ is an automorphism, so averaging a nonnegative Perron vector over these permutations gives a Perron vector that is constant on every $Q_i$. Hence the spectral radius is the Perron root of the associated quotient matrix, in agreement with the standard theory of generalized joins and equitable quotients \cite{CardosoEtAl,YouYangSoXi}. What is needed below is a uniform first-order expansion of that bounded-dimensional Perron root.
The resulting quadratic coefficient is consistent with Zhang's walk-series expansion for graphs with a spanning complete multipartite core \cite{ZhangWalks} and with the one-exception estimate of O and Wu \cite[Lemma~4.4]{OWu}. We record and prove the required uniform expansion.

\begin{lemma}\label{lem:spectral-expansion}
Let $q_i=|Q_i|$, $N=\sum_iq_i$, and suppose $q_i=N/r+O(1)$ for every $i$. Then the resulting graph $G$ satisfies
\[
\rho(G)=\left(1-\frac1r\right)N
+\frac1{r(r-1)}\sum_{v\in S}k_v^2
+O(N^{-1}).
\]
Equivalently, since $n=N+s$,
\[
\rho(G)=\left(1-\frac1r\right)n
-\left(1-\frac1r\right)s
+\frac1{r(r-1)}\sum_{v\in S}k_v^2
+O(n^{-1}).
\]
For every fixed $C$, the error term is uniform over all templates with
$|S|\le C$ and $|q_i-N/r|\le C$.
\end{lemma}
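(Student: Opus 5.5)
We work with the Perron vector that is constant on each $Q_i$, as justified just before the statement. Write $\rho=\rho(G)$, let $x_i$ be the common Perron coordinate on $Q_i$, and let $y_v$ be the coordinate at $v\in S$. The eigen-equations read
\[
\rho x_i=\sum_{j\ne i}q_jx_j+\sum_{v\in S}R_{vi}y_v,\qquad
\rho y_v=\sum_{i}R_{vi}q_ix_i+\sum_{u\in N_S(v)}y_u .
\]
First we obtain crude a priori bounds. The complete $r$-partite core alone gives $\rho\ge(1-\frac1r)N-O(1)$. Adding the bounded graph $S$ with at most $Cr$ incidences per vertex raises $\rho$ by at most $O(1)$, for instance by Weyl's inequality applied to the rank-bounded perturbation. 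Hence $\rho=(1-\frac1r)N+O(1)$.

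Normalize $\sum_i q_ix_i=1$. The second equation gives
\[
y_v=\rho^{-1}\Big(\sum_iR_{vi}q_ix_i+O(\max_u y_u)\Big).
\]
So all $y_v=O(N^{-1})$, and more precisely $y_v=\rho^{-1}\sum_iR_{vi}q_ix_i+O(N^{-2})$.

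Next we show the core coordinates are nearly uniform. Write the first equation as
\[
(\rho+q_i)x_i=1+\sum_vR_{vi}y_v=1+O(N^{-1}).
\]
Hence $x_i=(\rho+q_i)^{-1}(1+O(N^{-1}))$. Since $q_i=N/r+O(1)$ and $\rho=(1-\frac1r)N+O(1)$, we get $q_ix_i=\frac1r+O(N^{-1})$. Substituting back,
\[
y_v=\frac{k_v}{r\rho}+O(N^{-2}).
\]

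Now sum the first equation multiplied by $q_i$. This gives
\[
\sum_i q_i(\rho+q_i)x_i=1+\sum_v y_v\sum_i R_{vi}q_i .
\]
The key is to derive the secular equation exactly rather than perturbatively. Set $\varepsilon_i=\sum_vR_{vi}y_v=O(N^{-1})$. Then $x_i=(1+\varepsilon_i)/(\rho+q_i)$, and the normalization $\sum_iq_ix_i=1$ becomes
\[
\sum_i\frac{q_i(1+\varepsilon_i)}{\rho+q_i}=1 .
\]
For $\varepsilon=0$, this equation has the root $\rho_0$, the spectral radius of the core. For near-balanced classes, a Taylor expansion around $q_i=N/r$ shows $\rho_0=(1-\frac1r)N+O(N^{-1})$. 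The first-order terms cancel because $\sum_i(q_i-N/r)=0$, and the quadratic terms are $O(N^{-1})$.

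Perturbing in $\varepsilon$ costs
\[
\Delta\rho\approx\frac{\sum_i q_i\varepsilon_i/(\rho+q_i)}{\sum_iq_i/(\rho+q_i)^2}.
\]
The numerator is $\frac1r\sum_i\varepsilon_i\,(1+O(N^{-1}))$ and the denominator is $\frac{r}{N}\cdot\frac1r\cdot(1+O(N^{-1}))$ up to the constant factor. Using $\sum_i\varepsilon_i=\sum_vk_vy_v=\sum_vk_v^2/(r\rho)+O(N^{-2})$ and $\rho\approx\frac{r-1}{r}N$, the perturbation evaluates to $\frac1{r(r-1)}\sum_vk_v^2+O(N^{-1})$.

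The main obstacle is the bookkeeping of error terms in this step. The correction must be computed to $O(N^{-1})$ absolute precision while $\rho$ is of order $N$, so relative errors must be $O(N^{-2})$. I would handle this by treating the normalization equation as a function $f(\rho)$ and applying the mean value theorem between $\rho_0$ and $\rho$. The derivative $f'$ is $-\Theta(N^{-1})$ uniformly on that interval, and $f(\rho_0)$ equals the $\varepsilon$-term up to $O(N^{-3})$. That $O(N^{-3})$ precision uses the refined $y_v$ estimate, including the $O(N^{-2})$ contribution from edges inside $S$.

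All constants depend only on $r$ and $C$, which gives the claimed uniformity. The second displayed form follows by substituting $N=n-s$.
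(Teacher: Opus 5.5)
Your secular-equation route is a legitimate alternative to the paper's, but one step fails as written: the crude upper bound $\rho\le(1-\frac1r)N+O(1)$.

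\textbf{The gap.} Weyl's inequality only gives $\rho(G)\le\rho_0+\|E\|$, where $E$ collects the edges incident with $S$. A single vertex of $S$ complete to a class of order about $N/r$ already spans a star of spectral radius about $\sqrt{N/r}$. So $\|E\|$ can be of order $\sqrt N$, not $O(1)$.

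Bounded rank does not help for the top eigenvalue. Rank-$k$ interlacing controls $\lambda_{1+k}(A+E)$, not $\lambda_1(A+E)$, and a rank-one perturbation can raise $\lambda_1$ by its full norm. The true shift is $O(1)$ only because it is a second-order effect, of size roughly $\|E\|^2/\rho_0$.

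You use this bound essentially. It gives $\rho+q_i=N+O(1)$, and hence:
\begin{itemize}
\item $q_ix_i=\frac1r+O(N^{-1})$;
\item $|f'|=\frac1N(1+O(N^{-1}))$ on $[\rho_0,\rho]$;
\item $N/\rho=\frac r{r-1}+O(N^{-1})$.
\end{itemize}
With only $\rho\le\rho_0+O(\sqrt N)$, each of these carries a relative error $O(N^{-1/2})$. The stated $O(N^{-1})$ error term then does not follow without a further bootstrap.

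\textbf{The repair} is short; either of the following works.
\begin{itemize}
\item Apply Collatz--Wielandt to the vector equal to $1$ on the core and $r/(r-1)$ on $S$, as the paper does.
\item Bootstrap within your own setup. Your bounds $y_v=O(N^{-1})$ and $\varepsilon_i=O(N^{-1})$ use only $\rho\ge\rho_0$, via $\rho\max_uy_u\le1+s\max_uy_u$. The exact equation then gives
$(\rho-\rho_0)\sum_iq_i/((\rho_0+q_i)(\rho+q_i))=\sum_iq_i\varepsilon_i/(\rho+q_i)\le\max_i\varepsilon_i\sum_iq_i/(\rho+q_i)\le\max_i\varepsilon_i$.
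For large $N$ the left-hand sum is at least $1/(3N)$ because $\rho\le n-1$, so $\rho-\rho_0=O(1)$.
\end{itemize}

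\textbf{Bookkeeping.} Your closing paragraph also overstates what is needed. The identity $f(\rho_0)-1=\sum_iq_i\varepsilon_i/(\rho_0+q_i)$ holds exactly. Absolute accuracy $O(N^{-2})$ in this quantity suffices, because it is multiplied by $|f'|^{-1}\approx N$. The edges inside $S$ are simply absorbed into the $O(N^{-2})$ error of $y_v$; no $O(N^{-3})$ precision is required.

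\textbf{Comparison with the paper.} Once patched, your argument gives the correct constant $\frac1{r(r-1)}\sum_vk_v^2$. The paper instead normalizes $\sum_iq_iy_i=N$ and uses the exact weighted identity $\lambda=N-\frac1N\sum_iq_i^2y_i+\frac1Nz^{\mathsf T}Rq$. There the normalization forces $\sum_iq_i(y_i-1)=0$, so first-order accuracy of the eigenvector yields the constant term directly. No comparison of two secular roots and no division by $|f'|\approx N^{-1}$ is needed. Your version makes $\rho\ge\rho_0$ and the monotone structure of the secular equation explicit. The cost is the extra precision needed for $\rho_0$ and for the mean-value step.
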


\begin{proof}
The component containing the complete multipartite core has spectral radius
$\Theta(N)$, whereas every component contained entirely in $S$ has bounded
spectral radius. For large $N$, a nonnegative Perron vector for $\rho(G)$ is
therefore supported on the component containing the core. Averaging this
vector over all permutations within each $Q_i$ gives a Perron vector that is
constant on every $Q_i$, while the vertices of $S$ are singleton cells.
Hence $\rho(G)$ is the Perron root of the associated quotient matrix
\cite{YouYangSoXi}. We use the equivalent Perron equations below in order to
keep track of the error term uniformly.

Fix $C$ and assume throughout that $s\le C$ and $|q_i-N/r|\le C$. All implied constants below may depend on $C$ and $r$, but not on the particular template. Put $D=\operatorname{diag}(q_1,\ldots,q_r)$ and $\alpha=1-1/r$.

We first record the spectral radius of the complete multipartite core.
Let $\lambda_0=\rho(K_{q_1,\ldots,q_r})$.  If the value of a positive
Perron vector on its $i$th part is $u_i$ and $T=\sum_iq_i u_i$, then $(\lambda_0+q_i)u_i=T$.
Consequently $\lambda_0$ is the unique positive solution of
\begin{equation}\label{eq:core-secular}
\Phi(t):=\sum_{i=1}^r\frac{q_i}{t+q_i}=1.
\end{equation}
Let $q_i=N/r+d_i$, where $|d_i|\le C$ and $\sum_i d_i=0$.
Since the minimum and maximum degrees of the core are
$\alpha N+O_C(1)$, we have $\lambda_0=\alpha N+O_C(1)$.  Moreover, $\Phi(\alpha N)=\sum_{i=1}^r(N/r+d_i)/(N+d_i)=1+O_C(N^{-2})$, where the term of order $N^{-1}$ cancels because $\sum_i d_i=0$. Uniformly for $t=\alpha N+O_C(1)$, we have $\Phi'(t)=-\sum_{i=1}^r q_i/(t+q_i)^2=-1/N+O_C(N^{-2})$.
Applying the mean value theorem to \eqref{eq:core-secular}, we obtain
\begin{equation}\label{eq:core-radius}
\lambda_0=\alpha N+O_C(N^{-1}).
\end{equation}

The core is an induced subgraph of $G$, so \eqref{eq:core-radius} is a
lower bound for $\lambda:=\rho(G)$.  For the reverse rough bound, apply
the Collatz--Wielandt inequality to the positive vector which equals
$1$ on every $Q_i$ and $1/\alpha$ on $S$.  The quotient at a core
vertex is $N-q_i+O_C(1)=\alpha N+O_C(1)$, while at a vertex of $S$ it
is at most $\alpha N+O_C(1)$.  Hence
\begin{equation}\label{eq:lambda-rough}
\lambda=\alpha N+O_C(1).
\end{equation}

If $s=0$, the asserted expansion is exactly
\eqref{eq:core-radius}, so assume $s\ge1$.  Every component contained
entirely in $S$ has spectral radius at most $s-1$, whereas the component
containing the core has spectral radius at least $\lambda_0=\Theta(N)$.
Thus the Perron root is attained on the latter component.  A
nonnegative Perron vector may be chosen positive there and zero on any
other component.  By symmetry it is constant on each $Q_i$; write that
value as $y_i$, and write its restriction to $S$ as $z$.  Normalize it by $\sum_{i=1}^r q_i y_i=N$.
The Perron equations are
\begin{align}
(\lambda+q_i)y_i&=N+(R^{\mathsf T}z)_i,
   \label{eq:P1}\\
(\lambda I-A_S)z&=RDy.
   \label{eq:P2}
\end{align}
Let $Y=\max_i y_i$ and $Z=\max_{v\in S}z_v$. From \eqref{eq:P2} and \eqref{eq:lambda-rough}, $Z\le(sZ+NY)/\lambda=O_C(Y)$. From \eqref{eq:P1}, we obtain $Y=O_C(1)$, and hence $Z=O_C(1)$.
Since $\lambda+q_i=N+O_C(1)$, we obtain
\begin{equation}\label{eq:y-close}
y_i=1+O_C(N^{-1})
\qquad(i\in[r]).
\end{equation}

Let $k=R\mathbf 1$, whose entries are the numbers $k_v$.  By
\eqref{eq:y-close},
\begin{equation}\label{eq:RDy-expansion}
RDy=\frac Nr k+O_C(1).
\end{equation}
Since $\|A_S\|\le s-1=O_C(1)$ and $\lambda=\Theta(N)$, the Neumann
series gives, uniformly in the template,
\begin{equation}\label{eq:resolvent}
(\lambda I-A_S)^{-1}
 =\frac1\lambda I+O_C(N^{-2})
\end{equation}
in operator norm.  Combining \eqref{eq:P2},
\eqref{eq:RDy-expansion}, \eqref{eq:resolvent}, and
$\lambda=\alpha N+O_C(1)$, we obtain
\begin{equation}\label{eq:z-expansion}
z=\frac{N}{r\lambda}k+O_C(N^{-1})
 =\frac{k}{r-1}+O_C(N^{-1}).
\end{equation}
This also shows explicitly that the edges inside $S$ affect only the
$O_C(N^{-1})$ term.

Multiply the equation
$\lambda y_i=\sum_{j\ne i}q_jy_j+(R^{\mathsf T}z)_i$ by $q_i$ and sum
over $i$.  The normalization gives the exact identity
\begin{equation}\label{eq:lambda-identity}
\lambda
=N-\frac1N\sum_{i=1}^r q_i^2y_i
  +\frac1N z^{\mathsf T}Rq.
\end{equation}
Put $\varepsilon_i=y_i-1$. Since $\sum_iq_i\varepsilon_i=0$, we have $\sum_iq_i^2\varepsilon_i=\sum_i d_iq_i\varepsilon_i=O_C(1)$. Also $\sum_iq_i^2=N^2/r+O_C(1)$, so $\sum_iq_i^2y_i=N^2/r+O_C(1)$.
Moreover, by \eqref{eq:z-expansion}, we obtain
\[
z^{\mathsf T}Rq
=\left(\frac{k}{r-1}+O_C(N^{-1})\right)^{\!\mathsf T}
  \left(\frac Nr k+O_C(1)\right)
=\frac{N}{r(r-1)}\lVert k\rVert^2+O_C(1).
\]
Substituting into \eqref{eq:lambda-identity}, we obtain
\[
\lambda=\left(1-\frac1r\right)N
+\frac1{r(r-1)}\sum_{v\in S}k_v^2+O_C(N^{-1}).
\]
Replacing $N$ by $n-s$ gives the second form.  Every estimate was
uniform for $s\le C$, bounded class-size deviations, and all possible
adjacency patterns on the bounded set $S$, as required.
\end{proof}

Applying \Cref{lem:spectral-expansion} to the two explicit constructions, we obtain the following spectral radii.

\begin{corollary}\label{cor:spectraAB}
The graphs $A_n$ and $B_n$ satisfy $\rho(A_n)=3n/4-5/6+O(n^{-1})$ and $\rho(B_n)=3n/4-3/4+O(n^{-1})$.
\end{corollary}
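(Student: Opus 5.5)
Both graphs fit the template of \Cref{lem:spectral-expansion} with $r=4$. The plan is to read off $s$ and the numbers $k_v$ for each exceptional vertex and substitute them into the second form of the expansion,
\[
\rho(G)=\tfrac34 n-\tfrac34 s+\tfrac1{12}\sum_{v\in S}k_v^2+O(n^{-1}).
\]
The one hypothesis to check is class balance: we need $q_i=N/4+O(1)$ uniformly in $n$.

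For $B_n$ this is immediate, since the $P_i$ are chosen as equal as possible.

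For $A_n$ we use the proof of \Cref{prop:Aedges}. There the maximizer of $e(A_n)$ has deviations from $N/4$ that are bounded. Concretely, the continuous optimum has deviations $(-\tfrac12,-\tfrac12,-\tfrac12,\tfrac32)$, and rounding to integers changes each deviation by $O(1)$. So the lemma applies uniformly with a fixed constant $C$.

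\textbf{The graph $A_n$.} Here $S=\{a,b,c,d\}$, so $s=4$. The row sums of the incidence matrix are $k_a=2$, $k_b=2$, $k_c=3$, $k_d=3$. This gives $\sum_v k_v^2=4+4+9+9=26$. The expansion yields
\[
\rho(A_n)=\tfrac34 n-3+\tfrac{26}{12}+O(n^{-1})=\tfrac34 n-\tfrac56+O(n^{-1}).
\]
The edges $ab$, $ac$, $bd$ inside $S$ do not matter: the lemma covers arbitrary adjacency patterns on $S$ and places their contribution in the error term.

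\textbf{The graph $B_n$.} Here $S=\{c_1,c_2,c_3,c_4,w\}$, so $s=5$. Each $c_i$ is complete to three classes, so $k_{c_i}=3$. The vertex $w$ has no core neighbors, so $k_w=0$. This gives $\sum_v k_v^2=36$, and
\[
\rho(B_n)=\tfrac34 n-\tfrac{15}{4}+3+O(n^{-1})=\tfrac34 n-\tfrac34+O(n^{-1}).
\]

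\textbf{The point needing care.} The lemma's proof assumes that the Perron vector lives on the component containing the core. In $B_n$, $w$ is attached only to the $c_i$, so it lies in that component and is simply a vertex of $S$ with $k_w=0$; no separate argument is needed. The only other thing to confirm is that the hypotheses hold uniformly in $n$, which was settled above, so no real obstacle remains.
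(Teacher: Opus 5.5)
Your proposal is correct and matches the paper's proof: both substitute $s=4$ with $k_v=(2,2,3,3)$ and $s=5$ with $k_v=(3,3,3,3,0)$ into \Cref{lem:spectral-expansion} and obtain $-5/6$ and $-3/4$. Your explicit check of the hypotheses of \Cref{lem:spectral-expansion}, namely that the class sizes of $A_n$ deviate from $N/4$ by $O(1)$ and that every exceptional vertex lies in the core component, is a small addition that the paper leaves implicit.
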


\begin{proof}
For $A_n$, the exceptional set has order $4$ and its numbers of adjacent large classes are $2,2,3,3$.
By \Cref{lem:spectral-expansion}, $\rho(A_n)=3(n-4)/4+(2^2+2^2+3^2+3^2)/12+O(n^{-1})=3n/4-5/6+O(n^{-1})$. For $B_n$, the numbers are $3,3,3,3,0$, and hence $\rho(B_n)=3(n-5)/4+4\cdot3^2/12+O(n^{-1})=3n/4-3/4+O(n^{-1})$.
\end{proof}

We are now ready to prove \Cref{thm:pair}.

\begin{proof}[Proof of \Cref{thm:pair}]
Combine \Cref{prop:Bchi} with \Cref{prop:Achi,prop:Afree,prop:Aedges,prop:Bfree,prop:Bedges,cor:spectraAB}. In particular, $e(A_n)-e(B_n)=n/4+O(1)$ and $\rho(B_n)-\rho(A_n)=1/12+O(n^{-1})$, which are positive for all sufficiently large $n$.
\end{proof}

\subsection{The bounded-template optimization problem}

We now show that the preceding pair is not accidental. Fix a constant $C\ge 5$. Let $\mathfrak{B}_C(n)$ be the family of graphs $G$ admitting a partition $V(G)=S\mathbin{\dot\cup}Q_1\mathbin{\dot\cup}Q_2\mathbin{\dot\cup}Q_3\mathbin{\dot\cup}Q_4$ with the following properties:
\begin{enumerate}[label=(\roman*),leftmargin=2.2em]
\item $|S|\le C$;
\item each $Q_i$ is independent and all edges between distinct $Q_i,Q_j$ are present;
\item every $s\in S$ is either complete or anticomplete to each $Q_i$;
\item $\bigl||Q_i|-(n-|S|)/4\bigr|\le C$ for every $i$.
\end{enumerate}
Let $\mathfrak{B}_C(n,F)$ be the subfamily consisting of $F$-free, non-$4$-partite graphs.

For $s\in S$, put $I(s)=\{i:s\text{ is complete to }Q_i\}$ and $L(s)=[4]\setminus I(s)$.
Since the four nonempty classes are pairwise complete, the sets of colors used on distinct classes are disjoint in every proper $4$-coloring.  As there are four classes and only four colors, each $Q_i$ is monochromatic and the four classes receive pairwise distinct colors.  Thus $L(s)$ is exactly the list of colors available to $s$.

\begin{observation}\label{obs:list-obstruction}
For every sufficiently large $n$ and every $G\in\mathfrak{B}_C(n,F)$, the lists $L(s)$ are nonempty and the graph $G[S]$ is not colorable from these lists.
\end{observation}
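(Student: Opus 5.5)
Both claims come from looking at what a vertex with an empty list, or a list-coloring of $G[S]$, would produce.

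\emph{Nonempty lists.} Suppose some $s\in S$ has $L(s)=\varnothing$, so $s$ is complete to all four classes $Q_1,\dots,Q_4$. By (iv), each $|Q_i|\ge (n-C)/4-C$, which is at least $3$ once $n$ is large. Then $G$ contains the complete $5$-partite graph $K_1\vee K_{3,3,3,3}$: take $s$ and three vertices from each $Q_i$.

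Next, $M=\mu(K_3)$ is $4$-colorable with color classes of size at most $3$. For instance, take $\{x_1,y_1\}$, $\{x_2,y_2\}$, $\{x_3,y_3\}$ and $\{w\}$; each is independent because $y_i\not\sim x_i$, and $w$ is adjacent only to the $y_j$. So $M\subseteq K_{3,3,3,3}$, hence $F=K_1\vee M\subseteq K_1\vee K_{3,3,3,3}\subseteq G$. This contradicts $F$-freeness.

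\emph{No list-coloring of $G[S]$.} Suppose $\phi$ is a proper coloring of $G[S]$ with $\phi(s)\in L(s)$ for every $s\in S$. Color every vertex of $Q_i$ with color $i$, and color each $s\in S$ with $\phi(s)$. We check that this is a proper $4$-coloring of $G$:
\begin{itemize}[leftmargin=2.2em]
\item each $Q_i$ is independent by (ii), so no edge lies inside a class;
\item edges between distinct classes $Q_i,Q_j$ join colors $i\ne j$;
\item edges inside $S$ are properly colored because $\phi$ is proper;
\item an edge from $s$ to $Q_i$ forces $i\in I(s)$ by (iii), so $\phi(s)\in L(s)$ gives $\phi(s)\ne i$.
\end{itemize}
This contradicts that $G$ is non-$4$-partite.

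The discussion before the observation shows the converse: every proper $4$-coloring of $G$ restricts to a list-coloring of $S$. That converse is not needed here.

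The only step requiring any care is exhibiting $M$ inside $K_{3,3,3,3}$. It is settled by the explicit $4$-coloring with classes of size at most $3$. The phrase ``sufficiently large $n$'' is used only to guarantee $|Q_i|\ge 3$.
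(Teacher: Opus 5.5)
Your proof is correct and follows the paper's argument: an empty list lets $s$ together with the complete $4$-partite core host $F$ via a proper $4$-coloring of $M$, which you make explicit with the classes $\{x_i,y_i\}$ and $\{w\}$. A list-coloring of $G[S]$ extends to a proper $4$-coloring of $G$ by giving $Q_i$ color $i$, and you are right that this extension direction is the only one needed.
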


\begin{proof}
If $L(s)=\varnothing$, then $s$ is adjacent to all four large classes. Fix a proper $4$-coloring of $M$. Since $\chi(M)=4$, all four colors occur, and for sufficiently large $n$ each color class can be embedded into the corresponding $Q_i$. The complete $4$-partite core then contains a copy of $M$, and together with $s$ it gives a copy of $F$. Thus all lists are nonempty.

A proper $4$-coloring of $G$ restricts to four distinct colors on $Q_1,\ldots,Q_4$ and then induces a proper list-coloring of $G[S]$. Conversely, every such list-coloring extends to $G$. Since $G$ is non-$4$-partite, no list-coloring exists.
\end{proof}

Define the defect of $s$ by $\delta_s=|L(s)|-1=3-|I(s)|\in\{0,1,2,3\}$, and put $D=\sum_{s\in S}\delta_s$ and $\Gamma=\sum_{s\in S}\delta_s(6-\delta_s)/12$.

\begin{lemma}\label{lem:defect}
For all sufficiently large $n$, if $G\in\mathfrak{B}_C(n,F)$, then $D\ge2$. If $D=2$, then there are exactly two vertices with defect $1$ and every other vertex has defect $0$.
\end{lemma}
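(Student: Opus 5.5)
The plan is to use \Cref{obs:list-obstruction}: all lists $L(s)$ are nonempty and $G[S]$ is not colorable from them. I will show that every list configuration with $D\le1$, and every configuration with $D=2$ other than two defect-$1$ vertices, either admits a list-coloring or yields a copy of $F$. The copies of $F$ will be built from an apex $z\in Q_m$ (or in $S$) together with a copy of $M$ inside $N(z)$. The large classes have linear size, so they supply as many vertices of each class as needed.

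\emph{A key embedding.} Suppose $s,t\in S$ are adjacent and $L(s)=L(t)=\{i\}$. Pick $j\ne i$ and an apex $z\in Q_j$; it is adjacent to $s$, $t$ and to every $Q_k$ with $k\ne j$. Let $uv$ be a critical edge of $M$ (\Cref{prop:Fcritical}), and let $c$ be a proper $3$-coloring of $M-uv$. Here $c(u)=c(v)$, and the three colors are mapped to $Q_i,Q_k,Q_l$, where $\{k,l\}=[4]\setminus\{i,j\}$ and $u,v$ carry the color sent to $Q_i$. Send $u\mapsto s$, $v\mapsto t$, and the remaining color classes into $Q_i,Q_k,Q_l$. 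This is legitimate for three reasons. The neighbours of $u,v$ in $M-uv$ have colors different from $c(u)$, so they land in $Q_k\cup Q_l$, to which $s,t$ are complete. The edge $uv$ is realized by $st$. All remaining edges of $M$ join distinct large classes. Hence $G\supseteq F$. This settles every case in which two adjacent vertices have the same singleton list.

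\emph{The case $D=0$.} All lists are singletons. A list-coloring exists unless some edge of $G[S]$ joins two vertices with the same singleton list, and that is excluded by the key embedding. So $D\ge1$.

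\emph{The case $D=1$.} There is one vertex $s_0$ with $L(s_0)=\{i,j\}$, and every other list is a singleton. By the key embedding, the singleton vertices are properly colored by their lists. Non-colorability then forces $s_0$ to have neighbours $t_1,t_2$ with $L(t_1)=\{i\}$ and $L(t_2)=\{j\}$. Let $\{k,m\}=[4]\setminus\{i,j\}$ and take the apex in $Q_m$. I will exhibit $M$ explicitly:
\[
x_1=s_0,\quad x_2=t_1,\quad x_3\in Q_k,\quad y_1\in Q_j,\quad y_2=t_2,\quad y_3\in Q_k\setminus\{x_3\},\quad w\in Q_i .
\]
I then check the required adjacencies directly from the incidences $I(s_0)=\{k,m\}$, $I(t_1)=[4]\setminus\{i\}$ and $I(t_2)=[4]\setminus\{j\}$. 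For example, $w\in Q_i$ is adjacent to $y_1\in Q_j$, to $t_2$ and to $y_3\in Q_k$. All seven vertices lie in $N(z)$ for $z\in Q_m$. This contradiction gives $D\ge2$.

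\emph{The case $D=2$.} Either there are two vertices of defect $1$, or there is one vertex $s_0$ of defect $2$ with all others singletons. In the latter case $|L(s_0)|=3$, while $s_0$ has only boundedly many singleton-list neighbours. After the singletons are colored (possible, as before), $s_0$ is uncolorable only if its neighbours cover all three colors of $L(s_0)$, say with lists $\{i\},\{j\},\{k\}$, and $I(s_0)=\{m\}$. This is the main obstacle: I must locate a copy of $M$ again. I would mimic the $D=1$ construction, taking the apex in $Q_m$ and choosing a triangle through $s_0$ and two of its singleton neighbours. One complication is that $s_0$ is not adjacent to $Q_i,Q_j,Q_k$, so the triangle must be $s_0t_at_b$ with $t_at_b$ an edge. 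If the three neighbours are pairwise nonadjacent, I would instead take the apex to be $s_0$'s neighbour $t_i$ itself, or a vertex of $Q_m$, and use a Mycielski-type embedding in which the neighbours act as shadow vertices. I expect a short case check over the adjacency pattern among $t_i,t_j,t_k$ to produce $F$ in each case, leaving exactly two defect-$1$ vertices.
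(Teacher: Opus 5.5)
Your cases $D=0$ and $D=1$ are correct. The key embedding through a critical edge $uv$ of $M$ is a more conceptual form of the paper's explicit map; the paper's map is exactly your construction with $uv=x_3y_2$. The criticality of $M$ itself comes from the result of Lam et al.\ quoted before \Cref{prop:Fcritical}, not from that proposition, which concerns $F$. Your $D=1$ embedding also checks out edge by edge. It puts $s_0$ on the triangle of $M$, whereas the paper uses $s_0$ as the root $w$ and its two singleton neighbours as shadow vertices; both work.

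The gap is the remaining case, which is the entire content of the second assertion: a single vertex $s_0$ with $L(s_0)=\{i,j,k\}$, $I(s_0)=\{m\}$, and singleton-list neighbours $t_i,t_j,t_k$. There you give only a plan ending in ``I expect a short case check''. Your primary idea, a triangle of $M$ through $s_0$ inside $N(z)$ with $z\in Q_m$, fails in general. Since $s_0$ sees only $Q_m$ among the large classes, its neighbours in $N(z)$ all lie in $S$. You would therefore need two adjacent $S$-neighbours of $s_0$, and these need not exist.

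Your fallback, with the $t$'s as shadow vertices and the apex in $Q_m$, is the right idea and needs no case split, but you have to carry it out. Map $z$ into $Q_m$, the triangle vertices $x_1,x_2,x_3$ to distinct vertices of $Q_i,Q_j,Q_k$, the shadows $y_1,y_2,y_3$ to $t_i,t_j,t_k$, and $w$ to $s_0$. Since $I(t_i)=[4]\setminus\{i\}$, the vertex $t_i$ is adjacent to the images of $x_2$ and $x_3$, and symmetrically for $t_j$ and $t_k$. The vertex $s_0$ is adjacent to all three $t$'s. Finally, $z$ is adjacent to every image because $m\notin\{i,j,k\}$ and $m\in I(s_0)$. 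Edges among $t_i,t_j,t_k$ are irrelevant, because you only need $F$ as a subgraph. This is precisely the paper's argument. Without it, the concentrated defect-$2$ pattern is not excluded and the second sentence of the lemma remains unproved.
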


\begin{proof}
All embeddings below are embeddings as ordinary subgraphs; additional edges among the selected vertices are harmless.

Assume first that $D=0$. By \Cref{obs:list-obstruction}, every list is nonempty, and hence every list is a singleton. Since the singleton assignment is not proper, there are adjacent vertices $x,y\in S$ with $L(x)=L(y)=\{1\}$.
Map the complete vertex $z$ to a vertex of $Q_2$. Map
$x_1,x_2,x_3$ to a vertex of $Q_3$, a vertex of $Q_4$, and $x$,
respectively. Map $y_1,y_2,y_3$ to another vertex of $Q_3$, the vertex
$y$, and a vertex of $Q_1$, respectively, and map the root $w$ to
another vertex of $Q_4$. All required edges are present because $x$ and
$y$ are complete to $Q_2,Q_3,Q_4$ and $xy$ is an edge. This gives a
copy of $F$, a contradiction.

Suppose $D=1$. If two adjacent vertices have the same singleton list, the previous argument applies. Otherwise there is one vertex $s$ with, after relabeling, $L(s)=\{1,2\}$,
and the singleton-list vertices are properly colored. Their forced
colors therefore give a proper coloring of $G[S\setminus\{s\}]$.
If some color in $L(s)$ were absent from $N_{G[S]}(s)$, assigning that
color to $s$ would extend this coloring to a list-coloring of $G[S]$.
Hence $s$ has a neighbor $x$ forced to color $1$ and a neighbor $y$
forced to color $2$.
Map $z$ to a vertex of $Q_3$, map $x_1,x_2,x_3$ to vertices of
$Q_4,Q_1,Q_2$, respectively, map $y_1,y_2,y_3$ to another vertex of
$Q_4$, the vertex $x$, and the vertex $y$, respectively, and map $w$ to
$s$. Again all required edges are present, producing $F$.

Finally suppose $D=2$ is concentrated at one vertex $s$, so $|L(s)|=3$. As before, we may assume that all singleton-list vertices are properly colored. Relabel so that $L(s)=\{1,2,3\}$.
As in the preceding case, the singleton lists give a proper coloring
of $G[S\setminus\{s\}]$. Every color in $L(s)$ must occur on a
neighbor of $s$, since otherwise that color could be assigned to $s$.
Thus there are neighbors $x_i$ of $s$ forced to colors $i=1,2,3$.
Map $z$ to a vertex of $Q_4$, map the triangle vertices of $M$ to
$Q_1,Q_2,Q_3$, respectively, map the corresponding shadow vertices to
$x_1,x_2,x_3$, and map $w$ to $s$. This is a copy of $F$, a
contradiction.

Thus $D\ge2$, and the only possible defect partition when $D=2$ is $1+1$.
\end{proof}

We next express the edge count in terms of the total defect.

\begin{lemma}\label{lem:edge-template}
Uniformly for $G\in\mathfrak{B}_C(n,F)$, we have $e(G)=t_4(n)-Dn/4+O_C(1)$.
\end{lemma}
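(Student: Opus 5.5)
Since $G\in\mathfrak{B}_C(n,F)$ has a rigid structure, the plan is to count the edges of $G$ directly and compare the result with $t_4(n)$. Write $s_0=|S|\le C$, $N=n-s_0$, $q_i=|Q_i|$, and $q_i=N/4+d_i$ with $|d_i|\le C$ and $\sum_i d_i=0$. The edges of $G$ split into three groups: the cross edges of the core, the edges from $S$ to the core, and the edges inside $S$. Hence
\[
e(G)=\sum_{1\le i<j\le4}q_iq_j+\sum_{s\in S}\sum_{i\in I(s)}q_i+e(G[S]).
\]
The last term is at most $\binom{C}{2}=O_C(1)$.

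For the core term, the identity $\sum_{i<j}q_iq_j=\frac12\bigl(N^2-\sum_i q_i^2\bigr)$ together with $\sum_i q_i^2=N^2/4+\sum_i d_i^2=N^2/4+O_C(1)$ gives $\sum_{i<j}q_iq_j=\frac38N^2+O_C(1)$. For the middle term, each $q_i=N/4+O_C(1)$ and $|I(s)|=3-\delta_s$. \Cref{obs:list-obstruction} guarantees that every list is nonempty, so $\delta_s\le 3$ and $|I(s)|$ is a genuine number in $\{0,1,2,3\}$. Thus
\[
\sum_{s\in S}\sum_{i\in I(s)}q_i=\frac N4\sum_{s\in S}(3-\delta_s)+O_C(1)=\frac{N(3s_0-D)}{4}+O_C(1).
\]
The error is uniform because $s_0\le C$ and $|d_i|\le C$.

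Putting the pieces together gives $e(G)=\frac38N^2+\frac{3s_0N}{4}-\frac{DN}{4}+O_C(1)$. Substituting $N=n-s_0$ yields
\[
\tfrac38(n-s_0)^2+\tfrac34 s_0(n-s_0)=\tfrac38n^2-\tfrac34s_0n+\tfrac34s_0n+O_C(1)=\tfrac38n^2+O_C(1).
\]
So the linear terms in $s_0$ cancel exactly, and $e(G)=\frac38n^2-\frac{Dn}{4}+O_C(1)$, using $DN/4=Dn/4+O_C(1)$ since $D\le 3C$. Finally $t_4(n)=\frac38n^2+O(1)$, which gives $e(G)=t_4(n)-Dn/4+O_C(1)$.

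The only points needing care are the uniformity of the constants and the cancellation of the $s_0$-dependence. The cancellation reflects the fact that a vertex complete to three classes plays the role of an ordinary core vertex, so only the missing adjacencies, counted by $D$, cost a linear number of edges. I do not expect a genuine obstacle here; the argument is essentially the same computation as in \Cref{prop:Aedges}.
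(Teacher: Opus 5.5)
Your proof is correct and follows the paper's own argument: the same three-way split of the edges, the estimate $q_i=(n-|S|)/4+O_C(1)$, the identity $\sum_{s\in S}|I(s)|=3|S|-D$, and the cancellation of the $|S|$-dependence after substituting $N=n-|S|$. One small slip: nonempty lists give $\delta_s\ge 0$ (that is, $|I(s)|\le 3$), not $\delta_s\le 3$. Neither bound is actually needed here, since $|I(s)|=3-\delta_s$ holds by definition.
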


\begin{proof}
Let $s=|S|$, $q_i=|Q_i|$, and let $k_v=|I(v)|=3-\delta_v$. Since $q_i=(n-s)/4+O_C(1)$,
\begin{align*}
e(G)
&=\sum_{i<j}q_iq_j+\sum_{v\in S}\sum_{i\in I(v)}q_i+e(G[S])\\
&=\frac38(n-s)^2+\frac{n-s}{4}\sum_{v\in S}k_v+O_C(1).
\end{align*}
Now $\sum_{v\in S}k_v=3s-D$.
Substitution and expansion give $e(G)=3n^2/8-Dn/4+O_C(1)=t_4(n)-Dn/4+O_C(1)$.
\end{proof}

Next we determine the spectral radius of a template in terms of the defect parameter $\Gamma$.

\begin{lemma}\label{lem:spectral-template}
Uniformly for $G\in\mathfrak{B}_C(n,F)$, we have $\rho(G)=3n/4-\Gamma+O_C(n^{-1})$.
\end{lemma}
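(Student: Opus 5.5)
The plan is to apply \Cref{lem:spectral-expansion} with $r=4$ directly to the template partition of $G$, and then rewrite the resulting constant in terms of the defects. First note that every $G\in\mathfrak{B}_C(n)$ satisfies the hypotheses of that lemma uniformly. The set $S$ has $|S|\le C$. The classes $Q_i$ are independent and pairwise complete. Each vertex of $S$ is complete or anticomplete to each $Q_i$. Finally, $|q_i-N/4|\le C$ with $N=n-|S|$. Hence the error term is $O_C(n^{-1})$, uniformly over the family, and no further structural input is needed. The $F$-freeness and non-$4$-partiteness play no role in this step.

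Writing $s=|S|$ and $k_v=|I(v)|=3-\delta_v$, the second form of \Cref{lem:spectral-expansion} gives
\[
\rho(G)=\frac34 n-\frac34 s+\frac1{12}\sum_{v\in S}(3-\delta_v)^2+O_C(n^{-1}).
\]
I then expand $(3-\delta_v)^2=9-6\delta_v+\delta_v^2$. The constant part contributes $\frac{9s}{12}=\frac34 s$, which cancels the term $-\frac34 s$ exactly. What remains is
\[
\frac1{12}\sum_{v\in S}\bigl(\delta_v^2-6\delta_v\bigr)=-\sum_{v\in S}\frac{\delta_v(6-\delta_v)}{12}=-\Gamma.
\]
This gives $\rho(G)=3n/4-\Gamma+O_C(n^{-1})$.

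One point needs checking: whether vertices with $k_v=0$ (defect $3$) or components of $G[S]$ disconnected from the core cause trouble. \Cref{lem:spectral-expansion} already handles both, since the Perron vector lives on the core component and the formula counts every $v\in S$. For a vertex with $k_v=0$, the term $k_v^2=0$ is consistent with $\delta_v=3$ contributing $3\cdot3/12=3/4$ to $\Gamma$, and this exactly offsets its contribution to $-\frac34 s$.

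There is no real obstacle here. The main point of care is uniformity: the constant $C$ bounds both $|S|$ and the class deviations, which is precisely the uniformity clause of \Cref{lem:spectral-expansion}. As a sanity check, the formula gives $\Gamma=2\cdot\frac{5}{12}=\frac56$ for $A_n$ and $\Gamma=\frac{9}{12}=\frac34$ for $B_n$, matching \Cref{cor:spectraAB}.
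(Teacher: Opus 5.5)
Your proposal is correct and follows the paper's proof: apply the second form of \Cref{lem:spectral-expansion} with $r=4$, substitute $k_v=3-\delta_v$, and use $\frac34-\frac{(3-\delta_v)^2}{12}=\frac{\delta_v(6-\delta_v)}{12}$, with uniformity coming directly from the uniformity clause of that lemma. Your expansion of $(3-\delta_v)^2$ and the sanity checks against $A_n$ and $B_n$ are also right.
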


\begin{proof}
By \Cref{lem:spectral-expansion}, $\rho(G)=3n/4-3|S|/4+(1/12)\sum_{v\in S}k_v^2+O_C(n^{-1})$.
Since $k_v=3-\delta_v$, we have $3/4-k_v^2/12=\delta_v(6-\delta_v)/12$. Summing over $S$, we obtain the formula.
\end{proof}

We are now ready to compare the two objectives inside $\mathfrak{B}_C(n,F)$.

\begin{theorem}\label{thm:template-separation}
Fix $C\ge5$. For all sufficiently large $n$, the edge-extremal and spectral-extremal families inside $\mathfrak{B}_C(n,F)$ are disjoint.
More precisely, every edge-extremal template has $D=2$, while every spectral-extremal template has $D=3$ concentrated at one vertex.
\end{theorem}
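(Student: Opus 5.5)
The proof is a comparison inside $\mathfrak{B}_C(n,F)$ using the two asymptotic formulas \Cref{lem:edge-template} and \Cref{lem:spectral-template}, together with the lower bound from \Cref{lem:defect}. We also need to know that both candidate optima are realized in $\mathfrak{B}_C(n,F)$. For $C\ge5$, the graph $A_n$ lies in the family: it has $|S|=4$, and its class sizes deviate from $(n-4)/4$ by $O(1)$. To be safe, we note that the maximizing deviations are $(-\tfrac12,-\tfrac12,-\tfrac12,\tfrac32)$ up to rounding, which is within $C$. Its defects are $1,1,0,0$, so $D=2$. The graph $B_n$ also lies in the family, with $|S|=5$ and defects $0,0,0,0,3$. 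Hence $D=3$, concentrated at $w$, and $\Gamma=3\cdot3/12=3/4$.

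For the edge side, \Cref{lem:edge-template} gives $e(G)=t_4(n)-Dn/4+O_C(1)$ uniformly over the family. \Cref{lem:defect} gives $D\ge2$, and $A_n$ attains $D=2$. Any template with $D\ge3$ has at least $n/4-O_C(1)$ fewer edges than $A_n$, so it is not edge-extremal for large $n$. Therefore every edge-extremal template has $D=2$, and by \Cref{lem:defect} its defects are exactly two $1$'s with all others $0$.

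For the spectral side, \Cref{lem:spectral-template} gives $\rho(G)=3n/4-\Gamma+O_C(n^{-1})$. Since the error is uniform, it suffices to show that $\Gamma\ge3/4$ over all templates in $\mathfrak{B}_C(n,F)$, with equality only when the defect multiset is $\{3,0,\dots,0\}$. Every other admissible configuration must have $\Gamma\ge3/4+c$ for a fixed $c>0$; this works because $\Gamma$ takes only finitely many values once $|S|\le C$. Write $g(\delta)=\delta(6-\delta)/12$, so $g(0)=0$, $g(1)=5/12$, $g(2)=8/12$, $g(3)=9/12$. We then split into cases according to $D$.

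If $D\ge3$, we show $\Gamma\ge3/4$ with equality only for a single defect $3$. Since $g(\delta)/\delta$ is decreasing, we have $g(\delta)\ge\delta\,g(3)/3=\delta/4$. Hence $\Gamma\ge D/4\ge3/4$. Equality forces $D=3$ and forces every positive $\delta$ to equal $3$, which means one vertex has defect $3$. Any other configuration with $D\ge3$ has $\Gamma\ge3/4+1/12$. This can be checked directly: the strict inequality $g(\delta)>\delta/4$ holds for $\delta=1,2$ with slack at least $1/6$, or else $D\ge4$ gives $\Gamma\ge1$.

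If $D=2$, \Cref{lem:defect} forces two defects equal to $1$, so $\Gamma=10/12>3/4$.

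Thus $B_n$ beats every other template by a margin of at least $1/12-O_C(n^{-1})$. Every spectral-extremal template therefore has $\Gamma=3/4$, which means $D=3$ concentrated at one vertex.

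Disjointness follows immediately, since the two objectives force different values of $D$. The main point needing care is the uniformity of the error terms: the finite set of possible $\Gamma$ values must be separated from $3/4$ by a gap that dominates the $O_C(n^{-1})$ error. We also need the realizability of $A_n$ and $B_n$ within the same $C$, which holds because $C\ge5$.
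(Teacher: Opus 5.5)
Your proposal is correct and follows essentially the same route as the paper's proof: the edge-side comparison via \Cref{lem:defect,lem:edge-template} with $A_n$ attaining $D=2$, and the spectral-side bound $\Gamma\ge D/4$ from $\delta(6-\delta)/12\ge\delta/4$, which isolates the single defect-$3$ pattern (realized by $B_n$) with a uniform gap of at least $1/12$. The one thing you add is an explicit check that $A_n$ and $B_n$ belong to $\mathfrak{B}_C(n,F)$ when $C\ge5$; the paper uses this membership without stating it.
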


\begin{proof}
By \Cref{lem:defect,lem:edge-template}, the smallest possible linear edge defect is $D=2$, and it is achieved by $A_n$. Therefore every edge-extremal graph in the class has $D=2$ for sufficiently large $n$.

If $D=2$, by \Cref{lem:defect}, there are two defect-$1$ vertices, and hence $\Gamma=2\cdot1(6-1)/12=5/6$.
For $D\ge3$, each $\delta\in\{1,2,3\}$ satisfies $\delta(6-\delta)/12\ge\delta/4$.
Thus $\Gamma\ge D/4\ge3/4$, with equality exactly when $D=3$ is concentrated at one vertex of defect $3$. The graph $B_n$ realizes this equality. Every other admissible defect pattern has $\Gamma\ge5/6$: the pattern $(1,1)$ with $D=2$ gives $5/6$; the nonconcentrated patterns $(2,1)$ and $(1,1,1)$ with $D=3$ give $13/12$ and $5/4$, respectively; and $D\ge4$ gives $\Gamma\ge1$. Hence the gap above the minimum $3/4$, which is attained by the unique positive defect pattern $(3)$, is at least $1/12$. Since the error term in \Cref{lem:spectral-template} is uniform over $\mathfrak B_C(n,F)$, every spectral-extremal graph in the class has the concentrated defect-$3$ pattern for all sufficiently large $n$. Such a graph has $D=3$ and cannot be edge-extremal.
\end{proof}

\section{Global reduction to bounded uniform templates}\label{sec:global-reduction}

This section supplies the structural reduction from \Cref{thm:template-separation} to the unrestricted extremal problems. We separate quoted inputs, adapted arguments, and statements specific to the present forbidden graph. The cleaning package and the minimum-Perron-coordinate deletion statement are quoted from O and Wu \cite{OWu}; the heavy-set Perron-coordinate estimate is also quoted from them, while the alignment argument is included because their formal alignment lemma assumes stronger embeddability. The later classification by bounded neighborhood types and the Perron-vector cloning step have close precedents in strong Tur\'an stability and recent spectral blow-up reductions \cite{TyomkynUzzell,ZhuLin}. The bounded non-$4$-colorable certificate, the safe replacements preserving that certificate, and the resulting common four-partite reduction are specific to the present setting and are proved in full. Throughout, $F=K_1\vee\mu(K_3)$ and $f=|V(F)|=8$.
All constants depend only on $F$. We first choose the cleaning parameter $\eta>0$ sufficiently small that $\eta\le1/400$ and $C_1\eta\le1/2$, where $C_1$ is the absolute constant in the quoted Perron-coordinate estimate below, and then fix the corresponding stability parameter $\delta$ from \Cref{lem:cleaning}.  Next choose
$\theta>0$ sufficiently small for the bounded color certificate.  The
constants bounding $J$, and then $D_0$, $K$, and $B_0$, are fixed in
this order; finally $n$ is taken sufficiently large.  Thus no later
constant enters an earlier argument.

\subsection{Cleaning and deletion of one vertex}

The following package is the $r=4$ specialization of O--Wu's Theorem~2.2, Lemmas~2.9--2.14, and Remark~2.15 \cite{OWu}. After $\eta$ is fixed, their stability theorem supplies a fixed $\delta>0$. Remark~2.15 allows either the spectral hypothesis below or the edge lower bound $e(H)\ge t_4(n)-n$.

\begin{lemma}[O--Wu]\label{lem:cleaning}
There exists $\eta_0>0$ such that, for every fixed $0<\eta\le\eta_0$, there are constants $\delta=\delta(F,\eta)>0$ and $n_0=n_0(F,\eta)$ with the following property. Let $H$ be an $n$-vertex, $F$-free, non-$4$-partite graph, where $n\ge n_0$, and suppose that either $\rho(H)\ge(3/4-\delta)n$ or $e(H)\ge t_4(n)-n$. Then $H$ has a partition $V(H)=V_1\mathbin{\dot\cup}V_2\mathbin{\dot\cup}V_3\mathbin{\dot\cup}V_4$ maximizing the number of crossing edges, and a nonempty set $L\subseteq V(H)$ such that, with $\overline V_i=V_i\setminus L$,
\begin{enumerate}[label=(\roman*),leftmargin=2.2em]
\item $|L|\le\eta n$ and $\bigl||V_i|-n/4\bigr|\le\eta n$ for every $i$;
\item every $H[\overline V_i]$ is empty;
\item every $v\in L$ has $d_H(v)\le(3/4-12\eta)n$, whereas every $v\notin L$ has $d_H(v)>(3/4-12\eta)n$;
\item if $v\in\overline V_i$ and $j\ne i$, then $v$ has at most $21\eta n$ nonneighbors in $\overline V_j$;
\item if $i\in[4]$ and $X\subseteq\bigcup_{j\ne i}\overline V_j$ has $|X|\le f$, then $\left|\bigcap_{x\in X}N_{\overline V_i}(x)\right|>f$.
\end{enumerate}
\end{lemma}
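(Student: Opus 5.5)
This lemma is quoted from O and Wu, so the plan is to check that their hypotheses hold for $F=K_1\vee\mu(K_3)$ with $r=4$, and then recover items (i)--(v) from the standard stability-plus-cleaning scheme.

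\emph{Step 1: stability.} Since $F$ is edge-color-critical with $\chi(F)=5$ (\Cref{prop:Fcritical}), the Erd\H{o}s--Simonovits stability theorem applies. In the edge case, $e(H)\ge t_4(n)-n$ gives $H$ within $o(n^2)$ edit distance of $T_{n,4}$. In the spectral case, $\rho(H)\ge(3/4-\delta)n$ together with $F$-freeness first gives $e(H)\ge(3/8-O(\sqrt\delta))n^2$. This uses the spectral stability theorem of Nikiforov, or of Desai et al., as in O--Wu's Theorem~2.2. The same conclusion then follows for $\delta$ small in terms of $\eta$. Take a max-cut $4$-partition $V_1,\dots,V_4$. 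Then $\sum_i e(H[V_i])\le\eta^3 n^2$ and $||V_i|-n/4|\le\eta n$, which is part of (i).

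\emph{Step 2: cleaning.} Define $L$ as the set of vertices with $d_H(v)\le(3/4-12\eta)n$; this is (iii) by definition. A counting argument bounds $|L|$. In the edge case, deleting low-degree vertices would otherwise push the edge count below $t_4$ of the remainder minus the slack. In the spectral case, use the Perron equation: low-degree vertices force $\rho$ down, or equivalently the near-$T_{n,4}$ edge count limits them. Either way $|L|\le\eta n$.

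For (iv), take $v\notin L$ in $\overline V_i$. Its degree exceeds $(3/4-12\eta)n$, and max-cut gives $d_{V_i}(v)\le d_{V_j}(v)$. If $v$ missed more than $21\eta n$ vertices of some $\overline V_j$, the degree bound would fail once the part sizes are taken into account. Items (v) and (ii) follow from (iv). Common neighborhoods of at most $f$ vertices in $\overline V_i$ have size at least $n/4-\eta n-f\cdot21\eta n>f$ since $\eta\le1/400$, which is (v). If $H[\overline V_i]$ contained an edge $uv$, one could embed $F$ greedily. Use the critical edge of $F$ on $uv$ and place the remaining color classes of a $5$-coloring of $F$ (with $u,v$ in one class) into common neighborhoods via (v). This contradicts $F$-freeness and gives (ii).

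\emph{Step 3: $L\neq\varnothing$.} This step uses non-$4$-partiteness. If $L=\varnothing$, then (ii) makes $\overline V_i=V_i$ independent, so $H$ would be $4$-partite. Hence $L$ is nonempty.

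The main obstacle is the spectral-case bound $|L|\le\eta n$ with the specific constant $12\eta$. I would follow O--Wu's Lemmas~2.9--2.14 directly, using Remark~2.15 to swap the spectral hypothesis for the edge one, rather than re-derive the constants.
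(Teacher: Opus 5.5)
Your fallback, quoting O--Wu's Theorem~2.2, Lemmas~2.9--2.14 and Remark~2.15 with $r=4$, is exactly what the paper does; it gives no argument beyond that citation. The self-contained sketch you offer, however, breaks at (iv). Since you derive (v) and then (ii) from (iv), the break propagates. Maximality of the cut only gives $d_{V_i}(v)\le d_{V_j}(v)$. So if $v\in\overline V_i$ missed more than $21\eta n$ vertices of $\overline V_j$, you would get merely $d(v)<2(|V_j|-21\eta n)+\sum_{k\ne i,j}|V_k|\le(1-40\eta)n$, which is compatible with $d(v)>(3/4-12\eta)n$. Your argument for (iv) never uses $F$-freeness, and (iv) is false without it. For example, add to $T_{n-1,4}$ a vertex $v$ adjacent to all but a $100\eta$-fraction of each class. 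The resulting graph is non-$4$-partite with more than $t_4(n)-n$ edges, and its maximum cuts are the Tur\'an classes with $v$ put into one of them. Since $d(v)\approx(1-100\eta)n$, we have $v\notin L$. Yet $v$ has about $25\eta n$ non-neighbours in each other class and about $(1/4-25\eta)n$ neighbours in its own class, so (iv) and (ii) both fail.

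What is missing is an own-part degree bound that uses $F$-freeness: every vertex satisfies $d_{V_i}(v)\le 8\eta n$ in its own part. Otherwise, max-cut gives more than $8\eta n$ neighbours of $v$ in every $V_k$. Discard the vertices with more than $\eta n$ missing crossing pairs; there are at most $\eta n$ of them, provided the stability error is small compared with $\eta^2n^2$. A greedy choice that never excludes more than $6\eta n$ candidates then finds $K_{2,2,2,2}$ inside $N(v)$. The sets $\{x_i,y_i\}$ $(i=1,2,3)$ and $\{w\}$ are independent, so $M\subseteq K_{2,2,2,1}$, and $v$ would be the apex of a copy of $F$. With this bound, a vertex $v\in\overline V_i$ has at most $d_{V_i}(v)+|V(H)\setminus V_i|-d(v)<(8+1+12)\eta n=21\eta n$ non-neighbours in $\overline V_j$. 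This is (iv) with exactly the stated constant. Your derivations of (v), of (ii) via the critical edge, and of $L\ne\varnothing$ then go through.

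There are two smaller points. First, with $f=8$ your inequality for (v) needs about $170\eta<1/4$, so $\eta\le1/400$ does not suffice; this is harmless, since $\eta_0$ may be taken smaller. Second, the spectral bound on $|L|$ is not the real obstacle. Once stability holds with error $\epsilon n^2$, each $v\in L$ misses at least $(3/4-\eta)n-(3/4-12\eta)n=11\eta n$ crossing pairs. There are at most $\epsilon n^2$ missing crossing pairs in total, so $|L|\le 2\epsilon n/(11\eta)$ by the same count in both cases. The spectral hypothesis enters only through the spectral stability theorem.
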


\begin{lemma}\label{lem:edge-delete}
For every sufficiently large $n$ and every $G\in\EX_5(n,F)$, there exists $u\in V(G)$ such that $\chi(G-u)=4$.
\end{lemma}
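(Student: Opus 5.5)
We argue by contradiction, comparing $G$ with the construction $A_n$. Let $G\in\EX_5(n,F)$ and suppose $\chi(G-u)\ge5$ for every $u\in V(G)$. Since $A_n$ is $F$-free and non-$4$-partite (\Cref{prop:Achi,prop:Afree}), \Cref{prop:Aedges} gives
\[
e(G)\ge e(A_n)=t_4(n)-n/2+O(1)\ge t_4(n)-n .
\]
So \Cref{lem:cleaning} applies. We obtain a max-cut partition $V_1,\dots,V_4$ and an exceptional set $L$ with properties (i)--(v). The plan is to remove a vertex $u$ of small degree, show that $G-u$ is still $F$-free and non-$4$-partite, and then add a vertex back to gain an edge.

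The first step is to bound the degree of some vertex. Let $u$ be a vertex of minimum degree; since $L\neq\varnothing$, property (iii) gives $d(u)\le(3/4-12\eta)n$. I would then show that $u$ can be replaced. By hypothesis $G-u$ is non-$4$-partite, and it is $F$-free. Take any vertex $v\in\overline V_1$, which is typical. Add a new vertex $u'$ that is a twin of $v$: it is adjacent to $N_G(v)\setminus\{u\}$ and nonadjacent to $v$.

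The resulting graph $G'$ has $n$ vertices and is non-$4$-partite, because it contains $G-u$. It is also $F$-free. Twins are nonadjacent, so any copy of $F$ uses at most one of $u',v$ in each image position. If a copy uses both, then $u'$ and $v$ sit at two nonadjacent vertices of $F$ with equal neighbourhoods in that copy. We cannot simply swap $u'$ for $v$ in that situation, so we use the standard fact that duplicating a vertex preserves $F$-freeness only if $F$ has no twins. Instead, we pick $v$ from property (v) so that the copy could be rerouted through another common neighbour vertex of $\overline V_1$.

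A cleaner route is to avoid twins altogether. Instead of adding a twin, make $u'$ adjacent to exactly the vertices of $\bigcup_{j\ne1}\overline V_j$ that are adjacent to $v$. Then $N(u')$ lies inside a $3$-partite set minus a few vertices. Any copy of $F$ through $u'$ would have to place $u'$ at $z$, or at a vertex of $M$. Using property (v) we can reroute $u'$ to an unused vertex of $\overline V_1$ with the same required adjacencies. This produces a copy in $G-u$, a contradiction.

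Finally, compare edge counts. We have
\[
e(G')\ge e(G)-d(u)+d(v)-|L|-O(1),
\]
and by (iii)–(iv),
\[
d(v)\ge\tfrac34 n-21\eta n\cdot3-|L| .
\]
This alone is not enough, so I would sharpen the choice of $u$. By max-cut and the edge lower bound, the set $L$ has bounded size. Every vertex of $L$ then has degree at most $(3/4-12\eta)n$, while $u'$ gets degree at least $(3/4-O(\eta))n$ relative to $\overline V$. Choosing the constants so that $12\eta$ beats the loss requires a finer form of (iv): the number of nonneighbours in $\overline V_j$ summed over $j$ must be $o(\eta n)$. I would get this from the near-extremality of $e(G)$, since $e(G)\ge t_4(n)-n$ forces the total number of missing cross edges to be $O(n)$. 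Averaging then yields a $v\in\overline V_1$ with only $O(1)$ missing cross neighbours. Hence $e(G')>e(G)$, contradicting $G\in\EX_5(n,F)$.

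The main obstacle is the $F$-freeness of $G'$. Specifically, one must rule out copies of $F$ that use $u'$ together with vertices of $L$ whose neighbourhoods are irregular. I expect this to need the rerouting argument via property (v), applied with $X$ being the at most $f$ image neighbours of $u'$ in the copy.
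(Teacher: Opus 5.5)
Your final argument is correct and is essentially the paper's: rewire the minimum-degree vertex $u\in L$ so that its neighbourhood lies in $\bigcup_{j\ne 1}\overline V_j$, use property (v) of the cleaning lemma to reroute any new copy of $F$ through an unused common neighbour in $\overline V_1$, and contradict edge-extremality. The only difference is that you restrict the new neighbourhood to $N(v)$ for a template vertex $v$, and this is what forces your patch via bounded $|L|$ and averaging over missing cross edges; since (v) applies to any set of at most $f$ vertices of that union, the paper simply joins $u$ to all of $X=\bigcup_{j\ne i_0}\overline V_j$, and then (i) and (iii) immediately give $|X|\ge 3n/4-4\eta n>(3/4-12\eta)n\ge d(u)$.
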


\begin{proof}
The graph $A_n$ gives $e(G)\ge e(A_n)=t_4(n)-n/2+O(1)>t_4(n)-n$.
Apply the edge form of \Cref{lem:cleaning}, choose $u\in L$, and fix $i_0\in[4]$. Put $X=(\bigcup_{i\ne i_0}\overline V_i)\setminus\{u\}$.
For sufficiently small $\eta$ and large $n$, items (i) and (iii) give $|X|>d_G(u)$.
Form $G'$ by deleting all edges incident with $u$ and then joining $u$ to every vertex of $X$.

We claim that $G'$ is $F$-free. Otherwise a new copy $F'\cong F$ must contain $u$. Let $y_1,\ldots,y_q$ be the neighbors of $u$ in this copy. Then $q\le f-1$ and every $y_j$ lies in $X$. By \Cref{lem:cleaning}(v), there is a vertex $v_0\in\overline V_{i_0}\setminus V(F')$ adjacent to all $y_j$. Replacing $u$ by $v_0$ produces a copy of $F$ in $G$, a contradiction.

If $G-u$ were non-$4$-partite, then $G'$ would also be non-$4$-partite and $e(G')=e(G)-d_G(u)+|X|>e(G)$, contrary to edge extremality. Hence $\chi(G-u)\le4$. Since $G$ is non-$4$-partite and $\chi(G)\le\chi(G-u)+1$, we also have $\chi(G-u)\ge4$.
\end{proof}

By \Cref{cor:spectraAB}, every $G\in\SPEX_5(n,F)$ satisfies $\rho(G)\ge\rho(B_n)=3n/4-3/4+O(n^{-1})>3(n-5)/4$ for all sufficiently large $n$.  Since $F$ is connected and
edge-color-critical, the following is a direct instance of the transfer
principle of O and Wu.

\begin{lemma}[O--Wu, Remark~2.17]\label{lem:spectral-delete}
For every sufficiently large $n$ and every $G\in\SPEX_5(n,F)$, the graph $G$ is connected, and every vertex $u$ of minimum Perron coordinate satisfies $\chi(G-u)=4$.
\end{lemma}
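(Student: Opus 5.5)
The plan is to mirror the proof of \Cref{lem:edge-delete}, replacing edge counting by a Rayleigh-quotient comparison with Perron coordinates. Let $G\in\SPEX_5(n,F)$. The paragraph preceding the statement gives $\rho(G)\ge 3n/4-3/4+O(n^{-1})>(3/4-\delta)n$, so the spectral form of \Cref{lem:cleaning} supplies the partition $V_1,\dots,V_4$, the light set $L$, and properties (i)--(v).

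\emph{Connectivity.} Suppose $G$ is disconnected. Since $\rho(G)$ is the maximum over components, some component $G_1$ has $\rho(G_1)=\rho(G)$. If $G_1$ is non-$4$-partite, join one of its vertices to a vertex of another component. The result is still $F$-free, because $F$ is $2$-connected and so every copy of $F$ lies inside a block; it is still non-$4$-partite, and it has strictly larger spectral radius, a contradiction. If instead $G_1$ is $4$-partite, then some other component $G_2$ carries the obstruction to $4$-colourability. Adding a bridge between $G_1$ and $G_2$ again preserves $F$-freeness and non-$4$-partiteness and strictly increases $\rho$, giving the same contradiction. Hence $G$ is connected, and its Perron vector $x$ is positive.

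\emph{Deletion.} Let $u$ minimize $x_u$ and suppose $\chi(G-u)\ge5$. Fix $i_0$ and put $X=(\bigcup_{i\ne i_0}\overline V_i)\setminus\{u\}$. Form $G'$ by deleting all edges at $u$ and joining $u$ to every vertex of $X$. Exactly as in \Cref{lem:edge-delete}, property (v) shows that $G'$ is $F$-free, and $G'$ is non-$4$-partite because $G-u\subseteq G'$. Moreover
\[
x^{\mathsf T}A(G')x-x^{\mathsf T}A(G)x=2x_u\Bigl(\sum_{v\in X}x_v-\sum_{v\in N_G(u)}x_v\Bigr).
\]
It therefore suffices to show that $\sum_{v\in X}x_v>\rho x_u=\sum_{v\in N(u)}x_v$. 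Equality $\rho(G')=\rho(G)$ cannot occur, since then $x$ would be a Perron vector of $G'$ and the eigen-equation at $u$ would fail. This would give $\rho(G')>\rho(G)$, a contradiction.

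\emph{Main obstacle.} The key step is comparing $\sum_X x_v$ with $\rho x_u$. For this I would invoke the quoted heavy-set Perron estimate: vertices outside $L$ have $x_v\ge(1-C_1\eta)\max_w x_w\ge\frac12\max_w x_w$, and more sharply $x_v=(1-O(\eta))\max x$. Because $u$ has minimum coordinate, the light set $L$ is nonempty, and $\rho x_u=\sum_{N(u)}x_v\le d(u)\max x$, we get the following. If $u\in L$ then $d(u)\le(3/4-12\eta)n$ by property (iii), so
\[
\sum_X x_v\ge\bigl(3/4-O(\eta)\bigr)n\,(1-O(\eta))\max x.
\]
Getting a strict win requires the $12\eta n$ degree gap to beat the $O(\eta)$ coordinate losses. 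Here I would use the finer estimate $x_v\ge(1-O(\eta^2))\max x$ on $\overline V$, or else compare $x_u$ directly with $x_v$ for $v\in\overline V$ via the eigen-equations. If $u\notin L$, I would still take a light vertex $w$ and use $x_u\le x_w$ together with the argument above applied to $w$. This shows the minimum-coordinate vertex behaves like a light vertex, and this alignment is exactly where the paper says care beyond the O--Wu hypotheses is needed.
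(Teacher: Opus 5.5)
Your connectivity argument is correct. Since $F$ is $2$-connected, a bridge between components creates no copy of $F$, keeps the graph non-$4$-partite, and strictly increases $\rho$. The setup of the deletion step is also correct: $G'$ is $F$-free by property (v), it is non-$4$-partite because $G-u\subseteq G'$, and the quadratic-form identity reduces everything to $\sum_{v\in X}x_v>\rho x_u$.

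That inequality is the whole content of the lemma, and you never prove it. The tool you reach for, \Cref{lem:heavy-coordinates}, is not available here. Its statement is phrased in terms of the deleted vertex $u$, and the paper says its proof uses the minimum-Perron-coordinate deletion step, which is the statement you are proving. The paper itself gives no proof of this lemma and simply quotes it from O--Wu. The extra argument the paper does supply, \Cref{lem:heavy-alignment}, comes after this lemma and depends on it; it is not about the deletion step. Even ignoring the circularity, you would be setting an unspecified per-coordinate loss $C_1\eta$ against the explicit gap $12\eta$ from (iii). Both are linear in $\eta$, so shrinking $\eta$ does not help, and your fallback bound $x_v\ge(1-O(\eta^2))\mu$ has no support.

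The missing idea is that you need no coordinate lower bounds on $X$ if you choose $i_0$ correctly.
\begin{itemize}
\item Let $x_{v^*}=\mu=\max_w x_w$. From $\rho\mu=\sum_{w\in N(v^*)}x_w\le d(v^*)\mu$ you get $d(v^*)\ge\rho\ge\rho(B_n)\ge 3n/4-1>(3/4-12\eta)n$, so $v^*\notin L$.
\item Take $i_0$ with $v^*\in\overline V_{i_0}$. By (ii), $N(v^*)\subseteq X\cup L\cup\{u\}$, hence $\sum_{v\in X}x_v\ge\rho\mu-(|L|+1)\mu\ge(3n/4-\eta n-2)\mu$.
\item For any $\ell\in L$ (which is nonempty), minimality of $x_u$ gives $\rho x_u\le\rho x_\ell\le d(\ell)\mu\le(3/4-12\eta)n\mu$. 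This holds whether or not $u\in L$.
\end{itemize}
The difference is at least $(11\eta n-2)\mu>0$. So $\rho(G')>\rho(G)$, your argument closes, and you get $\chi(G-u)\le4$, hence $\chi(G-u)=4$.
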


\subsection{A four-partite core and a bounded color certificate}

Fix either an edge-extremal graph or a spectral-extremal graph $G$, and choose $u$ according to \Cref{lem:edge-delete,lem:spectral-delete}. We write $V(G-u)=P_1\mathbin{\dot\cup}P_2\mathbin{\dot\cup}P_3\mathbin{\dot\cup}P_4$, where every $P_i$ is independent.  For large $n$, the lower bound supplied by $A_n$ in the edge case
satisfies the edge hypothesis of \Cref{lem:cleaning}, while the lower bound supplied by
$B_n$ in the spectral case satisfies \Cref{lem:cleaning}.
Independently, fix a cleaning partition $V_1,\ldots,V_4$ and its exceptional set $L$ supplied by the applicable cleaning statement; in the spectral case, use the same graph $G$ and the minimum-Perron-coordinate vertex $u$ chosen above. Put $p_i=|P_i|$ and $M=\sum_{i<j}p_ip_j-e(G-u)$.

Roberts and Scott's stability theorem already gives quantitative near-$4$-partiteness from a near-Tur\'an edge bound \cite[Theorem~1.4]{RobertsScott}, and Hou et al.~proved the corresponding statement for suspensions \cite[Theorem~1.5]{HouLiZeng}. We need a bound relative to the fixed color classes of $G-u$, so we record the following short deduction from the edge--spectral Tur\'an theorem of Li et al.~\cite[Theorem~1.5]{LiLiuZhang}, Simonovits's color-critical Tur\'an theorem \cite{Simonovits}, and the variance identity.

\begin{lemma}[Linear crossing defect]\label{lem:linear-defect}
There is a constant $C_0$ such that $M\le C_0n$ and $p_i=n/4+O(\sqrt n)$ for every $i$.
\end{lemma}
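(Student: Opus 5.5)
The plan is to bound $e(G-u)$ from below using the lower bound on $G$, and then compare it with $\sum_{i<j}p_ip_j$ through the variance identity.

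\emph{Edge case.} Here $e(G)\ge e(A_n)=t_4(n)-n/2+O(1)$. Since $d_G(u)\le n-1$, we get $e(G-u)\ge t_4(n)-3n/2+O(1)$.

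\emph{Spectral case.} Here we only know $\rho(G)\ge 3n/4-3/4+O(n^{-1})$. We use the edge--spectral Tur\'an theorem of Li et al.~\cite[Theorem~1.5]{LiLiuZhang}. In the form we need, it says that for a color-critical $F$ with $\chi(F)=r+1$ and large $n$, every $F$-free graph with $e(G)\le t_r(n)$ satisfies $\rho(G)\le\rho$ of the extremal configuration, and more usefully that $e(G)\ge t_r(n)-O(n)$ whenever $\rho(G)\ge(1-1/r)n-O(1)$. If that formulation is not available, we argue directly. By Lemma~\ref{lem:cleaning}, $G$ has a near-Tur\'an partition with exceptional set $L$. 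Let $\mathbf x$ be the Perron vector. The spectral lower bound $\rho(G)\ge 3n/4-O(1)$, combined with the Rayleigh identity $\rho=2\sum_{uv\in E}x_ux_v$ and the fact that Perron coordinates of non-$L$ vertices are $1-O(\eta)$ of the maximum, shows that the number of missing cross edges among $V\setminus L$ is $O(n)$. Separately, vertices in $L$ have small degree, which lowers $\rho$ by a definite amount per vertex; this forces $|L|=O(1)$. Either way we obtain $e(G)\ge t_4(n)-C'n$, hence $e(G-u)\ge t_4(n)-(C'+1)n$. This spectral-to-edge step is the main obstacle, and we would try the cited theorem first.

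\emph{Conclusion via the variance identity.} Since $G-u$ is $4$-partite with parts $P_i$,
\[
e(G-u)=\sum_{i<j}p_ip_j-M .
\]
Writing $p_i=(n-1)/4+d_i$ with $\sum_i d_i=0$, we have
\[
\sum_{i<j}p_ip_j=\tfrac{3}{8}(n-1)^2-\tfrac12\sum_i d_i^2 \le t_4(n-1)\le t_4(n)-\tfrac34 n+O(1).
\]
Combining this with the lower bound on $e(G-u)$ gives
\[
M+\tfrac12\sum_i d_i^2\le t_4(n)+O(1)-e(G-u)=O(n).
\]
Both terms are nonnegative, so $M\le C_0n$ and $\sum_i d_i^2=O(n)$. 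The latter gives $p_i=n/4+O(\sqrt n)$.

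Simonovits's color-critical theorem \cite{Simonovits} enters only to guarantee $e(G)\le t_4(n)$. That bound is not needed for the inequality above, since the $O(n)$ slack already comes from the lower bounds. It does, however, confirm that the quantities involved are consistent.
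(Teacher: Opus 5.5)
Your argument is the paper's own: the edge case via $e(A_n)$ and $d_G(u)\le n-1$, the spectral case via Li et al.'s Theorem~1.5, and then the variance identity for $\sum_{i<j}p_ip_j$, which gives $M=O(n)$ and $\sum_i d_i^2=O(n)$ at once. The only point to tighten is how you state the cited theorem: it actually gives $\rho(G)^2\le\frac32 e(G)$ for $F$-free $G$ with enough edges (a hypothesis you check via $\rho(G)^2\le 2e(G)$), and then $e(G)\ge\frac23\rho(G)^2\ge t_4(n)-O(n)$ is immediate; so your fallback is unnecessary, and as sketched it would not work anyway, because Perron coordinates known only up to a factor $1-O(\eta)$ control $\rho$ only to within $O(\eta n)$ and so bound the missing cross edges only by $O(\eta n^2)$.
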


\begin{proof}
For an edge-extremal graph, the lower bound supplied by $A_n$ and the inequality $d(u)\le n-1$ give $e(G-u)\ge t_4(n-1)-O(n)$.
For a spectral-extremal graph, first note that
$\rho(G)\ge\rho(B_n)=3n/4-O(1)$ and
$\rho(G)^2\le 2e(G)$, so $e(G)=\Omega(n^2)$.  Thus the
large-edge hypothesis in the edge-spectral Tur\'an theorem
of Li et al.~\cite[Theorem~1.5]{LiLiuZhang} is satisfied, and
that theorem gives $\rho(G)^2\le3e(G)/2$.
Together with Simonovits's theorem~\cite{Simonovits}, which gives $e(G)\le t_4(n)$ for
large $n$, it follows that $e(G)=t_4(n)-O(n)$ and hence again $e(G-u)\ge t_4(n-1)-O(n)$.
Since $e(G-u)=\sum_{i<j}p_ip_j-M\le t_4(n-1)-M$, we get $M=O(n)$. Finally, $t_4(n-1)-\sum_{i<j}p_ip_j=\frac12\sum_{i=1}^4(p_i-(n-1)/4)^2+O(1)$, which yields the claimed balance.
\end{proof}

For $v\in P_i$, define its full crossing defect by $\operatorname{def}(v)=|\{w\in V(G)\setminus P_i:vw\notin E(G)\}|$.
By \Cref{lem:linear-defect},
\begin{equation}\label{eq:sum-def}
\sum_{v\ne u}\operatorname{def}(v)\le 2M+n=O(n).
\end{equation}
Fix a constant $C_2>0$ such that the left-hand side of
\eqref{eq:sum-def} is at most $C_2n$ for all sufficiently large $n$.

\begin{lemma}[Bounded color certificate]\label{lem:certificate}
There is a constant $K_0$ and a set $J\subseteq V(G)$ such that $u\in J$, $|J|\le K_0$, and $\chi(G[J])>4$.
\end{lemma}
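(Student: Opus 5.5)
The idea is to exploit that $G-u$ is $4$-colored by $P_1,\dots,P_4$ and that almost all vertices have tiny crossing defect, so that $G-u$ contains many vertices behaving like a complete $4$-partite core. The obstruction to $4$-coloring $G$ then comes only from the colors available to $u$ and to a bounded number of vertices near $u$.

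\textbf{Step 1: typical vertices.} Call $v\in P_i$ \emph{typical} if $\operatorname{def}(v)\le\theta n$. By \eqref{eq:sum-def}, at most $C_2/\theta$ vertices are atypical, so the set $W$ of atypical vertices has bounded size. Choose $\theta$ small, say $\theta\le 1/(100 f)$. Since $p_i=n/4+O(\sqrt n)$, any bounded collection of typical vertices in classes other than $P_i$ has more than $n/5$ common neighbours in $P_i$ among typical vertices.

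\textbf{Step 2: the candidate set.} Take $J_0=\{u\}\cup W$. For each $v\in J_0$ and each $i$ such that $v$ has at least one neighbour in $P_i$, pick one witness neighbour of $v$ in $P_i$, typical if possible. Put $J=J_0\cup\{\text{witnesses}\}\cup T$, where $T$ consists of four typical vertices $t_i\in P_i$ forming a $K_4$. Such a $K_4$ can be chosen to be complete to all the bounded witness set, by greedy choice in common neighbourhoods; this uses Step 1. Then $|J|\le K_0$, where $K_0$ depends only on $C_2/\theta$.

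\textbf{Step 3: $\chi(G[J])>4$.} Suppose $c$ is a proper $4$-coloring of $G[J]$. I would extend it to all of $G$. Without loss of generality $c(t_i)=i$. Color every vertex of $V\setminus J$ that lies in $P_i$ with color $i$.

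\emph{Edges among typical vertices.} Such edges join different classes, so they are proper automatically.

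\emph{Edges from a typical $x\in P_i\setminus J$ to $v\in J$.} We need $c(v)\ne i$. If $v$ is typical, first show $c(v)$ equals its class index. A typical $v\in P_j$ is adjacent to the three $t_k$ with $k\ne j$, provided $T$ was chosen inside the common neighbourhood of all typical vertices of $J$, which is possible since $J$ is bounded. Hence $c(v)=j\ne i$.

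\emph{The atypical vertices, including $u$.} This is the main obstacle: an edge $vx$ with $v\in W$ and $x$ typical in $P_i$ is the problem. I would handle it by choosing $T$ so that each $t_i$ copies the adjacencies of $W$ to $P_i$. Partition the typical vertices of $P_i$ into at most $2^{|W|}$ types according to their neighbourhood in $W$. Then add to $J$ one representative of each nonempty type, chosen as a typical vertex in the common neighbourhood of the typical vertices already placed in $J$. Each representative $r$ of class $i$ is forced to color $i$, because it is adjacent to the $t_k$ with $k\ne i$. If $v\in W$ is adjacent to $x$ of some type, it is adjacent to that type's representative $r$, so $c(v)\ne i$. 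Thus every edge from $W\cup\{u\}$ to $V\setminus J$ is properly colored.

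This gives a proper $4$-coloring of $G$, contradicting $\chi(G)=5$. The set $J$ has size at most
\[
1+|W|+4+4\cdot 2^{|W|+1}=O(1),
\]
which bounds $K_0$. The witnesses of Step 2 then become unnecessary.

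The delicate point is the simultaneous choice of the $t_i$ and the representatives inside common neighbourhoods. This is legitimate because the set of typical vertices of $J$ is bounded, and typical vertices miss at most $\theta n$ vertices.
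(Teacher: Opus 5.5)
There is a genuine gap in how you force the typical vertices of $J$ to receive their class colour. Your outer architecture is the right one: a $K_4$ of typical vertices fixes the four colours, each exceptional vertex gets a neighbour in $J$ of forced colour $i$ whenever it has a typical neighbour in $P_i$, and any $4$-colouring of $G[J]$ then extends by giving every remaining vertex of $P_i$ colour $i$.

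The failing step is the claim in Step~1 that ``any bounded collection of typical vertices in classes other than $P_i$ has more than $n/5$ common neighbours in $P_i$.'' This is false as stated. Each typical vertex may miss up to $\theta n$ vertices of $P_i$, so $m$ of them can jointly miss $m\theta n$ vertices. You therefore need $m\theta$ small, not just $m=O(1)$.

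Your $m$ is not bounded independently of $\theta$. The atypical set can have size about $C_2/\theta$, and you take up to $2^{|W|+1}$ representatives per class, so $m\theta$ can be of order $2^{C_2/\theta}\theta$, which is large for every choice of $\theta$. Even with only one witness per pair $(v,i)$, you have about $4C_2/\theta$ vertices that $T$ must be complete to. The only available bound on what they exclude is about $4C_2n$, and since $C_2\ge1$ this exceeds $|P_k|$. So a $K_4$ in the common neighbourhood of all typical vertices of $J$ need not exist.

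Reversing the order does not help either. A nonempty type may consist of a single vertex that is nonadjacent to one of the already chosen $t_k$. Then no representative of that type is forced to colour $i$, and your extension argument for edges leaving $W\cup\{u\}$ breaks.

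The missing idea is to make the forcing local, so that it never requires a common neighbourhood of more than four vertices:
\begin{enumerate}[label=(\alph*),leftmargin=2.2em]
\item Fix the $K_4$ $a_1a_2a_3a_4$ first.
\item For each $z\in W\cup\{u\}$ and each $i$ such that $z$ has a typical neighbour in $P_i$, choose just one such neighbour $w(z,i)$.
\item For each chosen $w\in P_i$ and each $j\ne i$, add an auxiliary typical vertex $y_j(w)\in P_j$ adjacent to $w$ and to the three $a_k$ with $k\ne j$.
\end{enumerate}
Each auxiliary choice involves the nonneighbourhoods of only four typical vertices. It therefore excludes at most $4\theta n+O(1)$ candidates, however many gadgets have already been built. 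In any proper $4$-colouring of $G[J]$ with $a_k$ coloured $k$, the vertex $y_j(w)$ is forced to colour $j$, and hence $w$ is forced to colour $i$. This gives $|J|=O(1/\theta)$, and your Step~3 extension then goes through unchanged. This two-level gadget is exactly the construction in the paper's proof.
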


\begin{proof}
Choose a fixed $\theta>0$ with $\theta<1/100$, and put $B=\{v\ne u:\operatorname{def}(v)>\theta n\}$.
By \eqref{eq:sum-def}, $|B|\le C_2/\theta$. Let $U_i=P_i\setminus B$.
For all sufficiently large $n$, \Cref{lem:linear-defect} gives
$|U_i|\ge n/5$, and every vertex of $U_i$ has at most $\theta n$
nonneighbors outside $P_i$.

Choose $a_i\in U_i$ so that $a_1a_2a_3a_4$ is a $K_4$. At the $i$th
greedy step, the previously chosen vertices exclude at most
$3\theta n$ candidates from $U_i$, which is smaller than $|U_i|$.
Thus the choice is possible, and the four vertices are automatically
distinct because they lie in different parts.

For every $z\in B\cup\{u\}$ and every $i$ with $N(z)\cap U_i\ne\varnothing$, choose one vertex $w(z,i)\in N(z)\cap U_i$, and let $W$ be the set of distinct vertices selected in this way.
Since $|B|=O(1)$, we have $|W|=O(1)$.

For each $w\in W\cap(U_i\setminus\{a_i\})$ and each $j\ne i$, choose $y_j(w)\in U_j$ adjacent to $w$ and to every $a_k$ with $k\ne j$. Choose all these
auxiliary vertices pairwise distinct and outside
$\{a_1,a_2,a_3,a_4\}\cup W$.
At each step, the adjacency requirements exclude at most
$4\theta n$ vertices of $U_j$, while the requirement that all selected
vertices be distinct excludes only a bounded number of further
vertices. Since $|U_j|\ge n/5$ and $4\theta<1/5$, all choices are
possible for sufficiently large $n$.

In any proper $4$-coloring of the resulting gadget, after permuting
the colors so that $a_i$ has color $i$, every $y_j(w)$ is forced to
receive color $j$, and hence $w$ is forced to receive color $i$. The
base vertex $a_i$ is already forced to color $i$ and needs no
auxiliary gadget. If the same vertex belongs to $W$ for more than one
pair $(z,i)$, only this single forcing gadget is used.

Let $J$ consist of $B\cup\{u,a_1,a_2,a_3,a_4\}$ together with $W$ and
all auxiliary vertices. Its order is bounded.

If $G[J]$ had a proper $4$-coloring, color all of $U_i$ with color $i$ and retain the colors on $B\cup\{u\}$. If a vertex $z\in B\cup\{u\}$ has color $i$, then it has no neighbor in $U_i$, since otherwise the selected vertex $w(z,i)$ would be an adjacent vertex of the same forced color. As each $P_i$ is independent, this extends the coloring to all of $G$, a contradiction.
\end{proof}

O and Wu proved the following Perron-coordinate estimate in their Lemma~3.1(c) \cite{OWu}. Its proof uses only the cleaning conclusions and the minimum-Perron-coordinate deletion step, so the same $r=4$ statement applies in the present setting.

\begin{lemma}[O--Wu, Lemma~3.1(c)]\label{lem:heavy-coordinates}
Suppose that $G$ is spectral-extremal, let $\boldsymbol{x}$ be its positive Perron vector, put $\mu=\max_vx_v$, and define $H_i=V_i\setminus(L\cup\{u\})$ and $H=\bigcup_{i=1}^4H_i$. There is a constant $C_1>0$ such that, for sufficiently small fixed $\eta$ and all sufficiently large $n$, every $v\in H$ satisfies $(1-C_1\eta)\mu\le x_v\le\mu$.
\end{lemma}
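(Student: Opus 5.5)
We prove the two inequalities separately; the upper bound $x_v\le\mu$ holds by the definition of $\mu$. For the lower bound we use the eigen-equation $\rho x_v=\sum_{w\sim v}x_w$ together with the degree information from \Cref{lem:cleaning}. Write $\rho=\rho(G)$. From \Cref{cor:spectraAB} and extremality we have $\rho\ge 3n/4-1$ for large $n$. From the cleaning lemma, every $v\in H_i$ has degree greater than $(3/4-12\eta)n$, and items (ii) and (iv) describe its neighbourhood: it has no neighbours in $\overline V_i$ and misses at most $21\eta n$ vertices of each $\overline V_j$, $j\ne i$.

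The first step is to show that $\mu$ is attained, up to a factor $1-O(\eta)$, on a vertex of $H$. Let $v^*$ maximize $x$. Then
\[
\rho\mu=\sum_{w\sim v^*}x_w\le |N(v^*)\cap(L\cup\{u\})|\,\mu+\sum_{w\in N(v^*)\setminus(L\cup\{u\})}x_w .
\]
Since $|L|\le\eta n$ and $\rho\ge(3/4-o(1))n$, this forces the average of $x$ over $N(v^*)\setminus (L\cup\{u\})\subseteq H$ to be at least $(1-O(\eta))\mu$. If $v^*\in L$, we additionally use $d(v^*)\le(3/4-12\eta)n<\rho$, which gives $\rho\mu\le d(v^*)\mu$, a contradiction. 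So $v^*\notin L$, although it could in principle be $u$; we do not need to exclude that case.

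The second step transfers largeness from an average over most of $H$ to each individual vertex of $H$. For $v\in H_i$,
\[
\rho x_v\ge\sum_{j\ne i}\sum_{w\in N(v)\cap H_j}x_w\ge\sum_{j\ne i}\Bigl(\sum_{w\in H_j}x_w-21\eta n\,\mu\Bigr).
\]
It therefore suffices to show that $\sum_{j\ne i}\sum_{w\in H_j}x_w\ge(1-O(\eta))\rho\mu$ for every $i$, equivalently that no single class $H_i$ carries a disproportionate share of the total weight. To get this, apply the eigen-equation to two vertices $v\in H_i$ and $v'\in H_{i'}$. Their neighbourhoods both contain almost all of $H_k$ for every $k\notin\{i,i'\}$. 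Subtracting the two equations gives
\[
\rho(x_v-x_{v'})=\Sigma_{i'}-\Sigma_i+O(\eta n\mu),\qquad \Sigma_k:=\sum_{w\in H_k}x_w .
\]
We combine this with the balance $|H_k|=n/4+O(\eta n)$. Since all $x_w\le\mu$, we have $\Sigma_k\le(1/4+O(\eta))n\mu$. The first step gives a lower bound on $\sum_{j\ne i_0}\Sigma_j$ for the class $i_0$ not met by the relevant neighbourhood of $v^*$, namely $\sum_{j\ne i_0}\Sigma_j\ge(3/4-O(\eta))n\mu$. Together with the upper bounds on each $\Sigma_j$, this forces each $\Sigma_j\ge(1/4-O(\eta))n\mu$. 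Substituting back, we obtain $x_v\ge(1-C_1\eta)\mu$ with an absolute constant $C_1$, because $\rho\le 3n/4+O(1)$.

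The main obstacle is the first step when $v^*=u$ or when $v^*$ has many neighbours in $L$. Then the averaging only gives control of $\sum_{N(v^*)\cap H}x_w$, and $N(v^*)\cap H$ might miss a whole class. We handle this using only the crude bound that every $\Sigma_j$ is at most $(1/4+O(\eta))n\mu$. The inequality $(3/4-o(1))n\mu\le\rho\mu\le\eta n\mu+\sum_{j}\Sigma_j\cdot[\text{met}]$ then shows that $N(v^*)$ meets at least three classes almost completely. This yields the required $(3/4-O(\eta))n\mu$ lower bound on a three-class sum, which is exactly what the second step needs. The bound does not depend on how $u$ sits relative to the cleaning partition, which is why the argument requires only the cleaning conclusions and the deletion step.
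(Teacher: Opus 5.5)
Your strategy (take a maximizing vertex $v^*$, rule out $v^*\in L$ via item (iii) and $\rho\ge 3n/4-1$, then feed items (ii) and (iv) into the eigen-equation) is the right one. The paper gives no proof of this lemma and only cites O--Wu, so there is no competing route to compare. The gap is in your second step. Since $v^*\notin L$, we have $v^*\in\overline V_{i_0}$ for some $i_0$, and the eigen-equation at $v^*$ gives $\sum_{j\ne i_0}\Sigma_j\ge\rho\mu-(\eta n+1)\mu$. Together with $\Sigma_j\le(1/4+\eta)n\mu$, this bounds $\Sigma_j$ from below only for the three indices $j\ne i_0$; it says nothing about $\Sigma_{i_0}$. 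So ``this forces each $\Sigma_j\ge(1/4-O(\eta))n\mu$'' does not follow. For $v\in H_i$ with $i\ne i_0$, your inequality $\rho x_v\ge\sum_{j\ne i}\Sigma_j-63\eta n\mu$ still contains the uncontrolled term $\Sigma_{i_0}$. The difference identity you state is never actually used to close this.

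The missing step is a bootstrap through the class of $v^*$. For $v\in H_{i_0}$ only the $\Sigma_j$ with $j\ne i_0$ enter, so $\rho x_v\ge\rho\mu-(64\eta n+1)\mu$, hence $x_v\ge(1-86\eta)\mu$ for large $n$. Consequently $\Sigma_{i_0}\ge(1-86\eta)|H_{i_0}|\mu$. Since $|H_i|-|H_{i_0}|\le 3\eta n+1$, this gives $\Sigma_{i_0}-\Sigma_i\ge -O(\eta n\mu)$. Now for $v\in H_i$, $i\ne i_0$,
\[
\rho x_v\ \ge\ \sum_{j\ne i_0}\Sigma_j+\Sigma_{i_0}-\Sigma_i-63\eta n\mu\ \ge\ \rho\mu-O(\eta n\mu),
\]
so $x_v\ge(1-C_1\eta)\mu$ with an absolute $C_1$.

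Comparing with $\rho\mu$ in this way also removes your unjustified appeal to $\rho\le 3n/4+O(1)$. If you want an upper bound anyway, $\rho\le\Delta(G)\le(3/4+2\eta)n$ follows from items (i)--(iii).

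Your final paragraph can be dropped. The ``met-class'' bookkeeping would not yield a three-class bound if all four classes could be met. It works only because $v^*\in\overline V_{i_0}$ has no neighbour in $H_{i_0}$ by item (ii), and this holds whether or not $v^*=u$.

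With the bootstrap inserted, your argument is a self-contained proof using only items (i)--(iv) and $\rho\ge 3n/4-1$. It never uses the minimum-coordinate property of $u$, which supports the paper's remark that the quoted estimate transfers to this setting.
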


The formal alignment lemma of O and Wu is stated under their stronger embeddability hypothesis. Since only the cleaning estimates and the $4$-coloring of $G-u$ are needed for the part used here, we give the short argument.

\begin{lemma}[Heavy-set alignment]\label{lem:heavy-alignment}
Suppose that $G$ is spectral-extremal and retain the notation of \Cref{lem:heavy-coordinates}. After relabeling the four color classes $P_1,\ldots,P_4$, we have $H_i\subseteq P_i$ for every $i\in[4]$. In particular, every $P_i$ contains a linear set of vertices whose Perron coordinates are at least $c_0\mu$ for some constant $c_0>0$.
\end{lemma}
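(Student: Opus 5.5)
We compare the two partitions of $V(G)\setminus\{u\}$: the cleaning partition $V_1,\dots,V_4$ and the proper coloring $P_1,\dots,P_4$ of $G-u$. The argument is combinatorial, using the degree structure of $H$. The Perron estimate of \Cref{lem:heavy-coordinates} enters only in the final sentence.

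First we show that each $H_i$ lies in a single color class. Take $v,v'\in H_i$. By \Cref{lem:cleaning}(v), applied with $X=\{v,v'\}\subseteq\overline V_i$ and each index $j\ne i$, the two vertices have more than $f$ common neighbors in every $\overline V_j$. Removing $L\cup\{u\}$ costs nothing, since $\overline V_j$ already excludes $L$ and $u\in L$ or $u$ is a single vertex. So the common neighbors in $H_j$ are nonempty for each $j\ne i$.

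Now choose, greedily, $w_j\in H_j$ for the three indices $j\ne i$ so that $\{v,w_j:j\ne i\}$ and $\{v',w_j:j\ne i\}$ are both cliques $K_4$. \Cref{lem:cleaning}(v) allows this, because we may add previously chosen vertices to $X$ (with $|X|\le 4\le f$). The three vertices $w_j$ are pairwise adjacent and lie in $G-u$, so they receive three distinct colors. Both $v$ and $v'$ are adjacent to all three, so in the 4-coloring of $G-u$ each is forced into the unique remaining color. Hence $v$ and $v'$ lie in the same $P$-class.

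Next, distinct $H_i$ and $H_{i'}$ must lie in distinct color classes. Pick $v\in H_i$ and $v'\in H_{i'}$ with $vv'\in E(G)$; such a pair exists by \Cref{lem:cleaning}(iv)/(v). Adjacent vertices get different colors, so $H_i$ and $H_{i'}$ fall into different $P$-classes. The map sending $i$ to the color class containing $H_i$ is therefore injective on $[4]$, hence a bijection. Relabel the $P_i$ along it to get $H_i\subseteq P_i$.

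The only subtlety is confirming that each $H_i$ is nonempty, in fact linear. This follows from \Cref{lem:cleaning}(i): $|H_i|\ge n/4-2\eta n-1$. The lower bound on Perron coordinates then follows from \Cref{lem:heavy-coordinates} with $c_0=1-C_1\eta\ge1/2$, using the earlier choice $C_1\eta\le1/2$.

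The main obstacle I expect is the careful use of \Cref{lem:cleaning}(v). It is stated for subsets of $\bigcup_{j\ne i}\overline V_j$, which the vertices $v,v'\in\overline V_i$ satisfy relative to each target index $j\ne i$. The common neighbor found must also avoid $u$, which is harmless since at most one vertex is excluded and more than $f$ candidates are available.
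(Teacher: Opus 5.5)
Your proof is correct, but its key step differs from the paper's.

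\textbf{Your route.} You use the common-neighbourhood property \Cref{lem:cleaning}(v) with $|X|\le4\le f$. For each pair $v,v'\in H_i$ you greedily build a triangle $w_j\in H_j$ ($j\ne i$) joined to both vertices. The triangle occupies three colour classes of $G-u$ and forces $v$ and $v'$ into the fourth, so each $H_i$ lies in a single colour class. A single edge between $H_i$ and $H_{i'}$ then makes the assignment $i\mapsto$ (class of $H_i$) injective.

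\textbf{The paper's route.} The paper never uses (v) here and instead argues by pigeonhole from items (i) and (iv). Some class $P_{c_i}$ contains at least $|H_i|/4>21\eta n$ vertices of $H_i$. Both the distinctness of the $c_i$ and the inclusion $H_i\subseteq P_i$ then follow from one observation. A vertex lying in the same independent class as such a large piece of $H_j$ would have more than $21\eta n$ nonneighbours in $\overline V_j$, contradicting (iv).

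\textbf{Comparison.} The paper's argument needs only the degree-type cleaning conclusion. It pays for this with the explicit smallness requirement $\eta\le1/400$, which is imposed precisely so that $|H_i|/4$ beats the nonneighbour bound. Your argument relies on the stronger embedding property (v). In exchange, it needs no quantitative comparison at all and is purely local. It is the same colour-forcing idea the paper uses later in \Cref{lem:certificate}.

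\textbf{Minor points.} Your remark that ``$u\in L$ or $u$ is a single vertex'' is muddled. All you need is $H_j=\overline V_j\setminus\{u\}$, together with the fact that (v) supplies more than $f\ge1$ candidates, so discarding $u$ still leaves one. Your final step, taking $c_0=1-C_1\eta\ge1/2$, agrees with the paper's choice $c_0=1/2$.
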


\begin{proof}
By \Cref{lem:cleaning}(ii), each $H_i$ is independent. Items (i) and (iv) give $|H_i|\ge(1/4-2\eta)n-1$ and show that every vertex of $H_i$ has at most $21\eta n$ nonneighbors in $H_j$ whenever $i\ne j$.

Consider the fixed proper $4$-coloring $P_1\mathbin{\dot\cup}\cdots\mathbin{\dot\cup}P_4$ of $G-u$. For each $i$, some color class, say $P_{c_i}$, contains at least $|H_i|/4$ vertices of $H_i$. Our initial choice of the cleaning parameter ensures $\eta\le1/400$. Then $|H_i\cap P_{c_i}|>21\eta n$ for every sufficiently large $n$. The colors $c_1,\ldots,c_4$ are distinct: if $c_i=c_j$ for $i\ne j$, a vertex of $H_i\cap P_{c_i}$ is nonadjacent to every vertex of $H_j\cap P_{c_j}$, contrary to \Cref{lem:cleaning}(iv). Relabel so that $c_i=i$. If a vertex $v\in H_i$ belonged to $P_j$ with $j\ne i$, then $v$ would be nonadjacent to all vertices of $H_j\cap P_j$, giving the same contradiction. Hence $H_i\subseteq P_i$ for every $i$.

Each $H_i$ has linear order. Our initial choice of $\eta$ also ensures $C_1\eta\le1/2$, so \Cref{lem:heavy-coordinates} gives $x_v\ge\mu/2$ for every $v\in H$. The last assertion follows with $c_0=1/2$.
\end{proof}

By \Cref{lem:certificate}, every later modification that leaves the induced
subgraph $G[J]$ unchanged produces a graph of chromatic number at least five
and hence remains non-$4$-partite.

\subsection{Intersection-neighborhood replacements}

For all sufficiently large $n$, every $P_i\setminus J$ has at least
$n/5$ vertices. Choose an integer $D_0>10C_2$. By
\eqref{eq:sum-def}, fewer than $C_2n/D_0<n/10$ vertices of $G-u$ have
defect greater than $D_0$. Hence every $P_i\setminus J$ contains at
least $n/10$ vertices of defect at most $D_0$, and in particular at
least $f+1$ such vertices. Let $Z_i=\{z_{i,1},\ldots,z_{i,f+1}\}\subseteq P_i\setminus J$ with $\operatorname{def}(z_{i,t})\le D_0$, and set $R_i=\bigcap_{t=1}^{f+1}N_G(z_{i,t})$.
Then
\begin{equation}\label{eq:R-complement}
R_i\subseteq V(G)\setminus P_i,
\qquad
\bigl|(V(G)\setminus P_i)\setminus R_i\bigr|\le(f+1)D_0.
\end{equation}

\begin{lemma}[Safe intersection replacement]\label{lem:intersection-replacement}
Let $v\in P_i\setminus J$. Delete all edges incident with $v$ and set its new neighborhood equal to $R_i$. The resulting graph is $F$-free and non-$4$-partite.
\end{lemma}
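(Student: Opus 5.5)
Write $G'$ for the graph obtained from $G$ by the replacement. The proof has two parts, one for each property.

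\emph{Non-$4$-partiteness.} Since $v\notin J$, the induced subgraph $G'[J]=G[J]$ is unchanged. By \Cref{lem:certificate}, $\chi(G[J])>4$, so $\chi(G')\ge5$ and $G'$ is non-$4$-partite. This part is immediate.

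\emph{$F$-freeness.} We use the standard cloning argument. Suppose $G'$ contains a copy $F'$ of $F$. Every edge of $G'$ not incident with $v$ is an edge of $G$, so $F'$ must use $v$, and all neighbors of $v$ in $F'$ lie in $R_i$. The idea is to replace $v$ by one of the vertices $z_{i,t}\in Z_i$; each $z_{i,t}$ is adjacent in $G$ to every vertex of $R_i$. Since $F'$ has $f$ vertices and $v\notin Z_i$ (indeed $v$ might coincide with no $z_{i,t}$, but even if some $z_{i,t}$ equals $v$ we only lose one candidate), at most $f-1$ vertices of $F'-v$ can lie in $Z_i$. Because $|Z_i|=f+1$, some $z=z_{i,t}$ satisfies $z\notin V(F')$ and $z\ne v$.

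Map $v\mapsto z$ and keep every other vertex of $F'$ fixed. We check each edge of $F'$:
\begin{itemize}[leftmargin=2.2em]
\item An edge of $F'$ not incident with $v$ avoids $v$, so it is an edge of $G'-v=G-v$, hence of $G$.
\item An edge $vy$ of $F'$ has $y\in R_i\subseteq N_G(z)$, so $zy\in E(G)$.
\end{itemize}
The resulting subgraph of $G$ is therefore a copy of $F$, contradicting that $G$ is $F$-free.

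There is one point to verify: the new neighborhood of $v$ should not contain $v$, and the choice of $z$ should not conflict with $v$'s membership. Since $R_i\subseteq V(G)\setminus P_i$ by \eqref{eq:R-complement} and $v\in P_i$, the vertex $v$ is not in $R_i$, so no loop is created. The vertices $z_{i,t}$ were chosen in $P_i\setminus J$, and if $v$ happens to be one of them, the counting above already accounts for it. Also, $z$ is not adjacent to $v$ in $G'$, but this is harmless because the edge $vz$ is never needed: $v$ is deleted from the copy.

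The only delicate step is the distinctness count guaranteeing a free clone $z$. It is handled by $|Z_i|=f+1>f$.
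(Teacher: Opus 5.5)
Your proof is correct and follows the paper's argument. Non-$4$-partiteness comes from the unchanged certificate $G[J]$. For $F$-freeness, any new copy of $F$ must use $v$, and an unused $z\in Z_i$ (guaranteed by $|Z_i|=f+1>f$) can replace $v$ because $R_i\subseteq N_G(z)$. Your parenthetical about whether $v\in Z_i$ is worded confusingly: $v$ may indeed be one of the $z_{i,t}$. This is harmless, since $v\in V(F')$ forces any $z\notin V(F')$ to differ from $v$.
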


\begin{proof}
The graph induced by $J$ is unchanged, so non-$4$-partiteness follows from \Cref{lem:certificate}. Suppose a new copy $F'\cong F$ appears. It must use $v$. Since $|Z_i|=f+1>|V(F')|$, some $z\in Z_i$ does not belong to $F'$. Every neighbor of $v$ in $F'$ lies in $R_i\subseteq N_G(z)$. Replacing $v$ by $z$ gives a copy of $F$ in the original graph, a contradiction.
\end{proof}

We now apply the safe replacement to bound the crossing defect outside $J$.

\begin{lemma}\label{lem:bounded-defect-outside-J}
There is a constant $K$ such that every $v\in V(G)\setminus J$ satisfies $\operatorname{def}(v)\le K$.
\end{lemma}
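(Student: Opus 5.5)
We argue by contradiction, comparing $G$ with the graph $G'$ obtained from the safe replacement of \Cref{lem:intersection-replacement}. Fix $v\in P_i\setminus J$. By \Cref{lem:intersection-replacement}, $G'$ (in which $v$ is rejoined to exactly $R_i$) is $F$-free and non-$4$-partite, so it is a competitor for both objectives. By \eqref{eq:R-complement}, $R_i$ misses at most $(f+1)D_0$ vertices of $V(G)\setminus P_i$. Hence
\[
|R_i|\ge n-|P_i|-(f+1)D_0 .
\]
On the other hand, $v\in P_i$ and $P_i$ is independent in $G-u$, so the only possible neighbour of $v$ inside $P_i$ is $u$. Therefore
\[
d_G(v)\le n-|P_i|-\operatorname{def}(v)+1 .
\]

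In the edge-extremal case this already suffices. Indeed,
\[
e(G')-e(G)=|R_i|-d_G(v)\ge \operatorname{def}(v)-(f+1)D_0-1 .
\]
If $\operatorname{def}(v)>(f+1)D_0+1$, then $e(G')>e(G)$, which contradicts edge extremality. So we may take $K\ge(f+1)D_0+1$.

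In the spectral case we use the Rayleigh quotient with the Perron vector $\boldsymbol x$ of $G$. Since $G'$ is obtained from $G$ by changing only edges at $v$,
\[
\rho(G')-\rho(G)\ge \boldsymbol x^{\mathsf T}(A(G')-A(G))\boldsymbol x
=2x_v\Bigl(\sum_{w\in R_i}x_w-\sum_{w\in N_G(v)}x_w\Bigr).
\]
Positivity of $x_v$ comes from connectivity (\Cref{lem:spectral-delete}). The inner difference is at least
\[
\sum_{w\in (V\setminus P_i)\setminus N_G(v)} x_w\;-\;\sum_{w\in (V\setminus P_i)\setminus R_i}x_w\;-\;x_u .
\]
The last two terms total at most $((f+1)D_0+1)\mu$. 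For the first term we need a lower bound of the form $c\,\operatorname{def}(v)\mu$ up to a bounded loss. This is exactly what \Cref{lem:heavy-alignment} provides. All but $|L|+1\le\eta n+1$ vertices outside $P_i$ lie in $H\setminus H_i$, and these have coordinates at least $\mu/2$. So we must control how many of the $\operatorname{def}(v)$ non-neighbours of $v$ can lie in the light set $L\cup\{u\}$.

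This is the main obstacle, because $L$ is linear in size and a priori all non-neighbours could be light. I would handle it in two stages.

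\emph{First stage: a linear bound.} Applying the same replacement and comparison yields $\operatorname{def}(v)\le\theta' n$ for some small $\theta'$. If $\operatorname{def}(v)\ge 2\eta n+K'$, at least $\operatorname{def}(v)-\eta n-1$ non-neighbours are heavy, which gives a contradiction.

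\emph{Second stage: the light non-neighbours.} I would show that light vertices carry Perron weight comparable to $\mu$ unless they are few. Alternatively, I would strengthen the comparison by also using the eigen-equation
\[
\rho x_w=\sum_{y\sim w}x_y
\]
for $w\in L\cap(V\setminus P_i)$. Such $w$ differ from $R_i$ only boundedly often only if they are not low-degree, so $L\setminus J$ should in fact be small. Concretely, the plan is to first apply the bounded replacement to every vertex of $L\setminus J$ simultaneously. This keeps the graph $F$-free, since each replacement is a copy-transfer argument through $Z_i$ and can be iterated, because the $Z_i$ and $R_i$ are fixed and $Z_i\cap L=\varnothing$. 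Each such replacement would strictly increase $\rho$ unless $L\setminus J$ is essentially already of intersection type.

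This forces every vertex outside $J$ to have all but boundedly many coordinates on its non-neighbours at least $c_0\mu$. We then get
\[
\rho(G')-\rho(G)\ge 2x_v\mu\bigl(c_0\operatorname{def}(v)-O(1)\bigr)>0
\]
for $\operatorname{def}(v)>K$, the required contradiction. Taking $K$ as the larger of the two thresholds completes the plan.
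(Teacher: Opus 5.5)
Your edge-extremal case is correct and is essentially the paper's argument. The $+1$ and the $-x_u$ are unnecessary: $P_1,\dots,P_4$ partition $V(G-u)$, so $u\notin P_i$ and $N_G(v)\subseteq V(G)\setminus P_i$ exactly.

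The spectral case has a genuine gap, at the step you yourself flag as the main obstacle. Your ``second stage'' asserts several things without proof:
\begin{itemize}
\item that light vertices ``carry Perron weight comparable to $\mu$ unless they are few'';
\item that vertices of $L\setminus J$ ``differ from $R_i$ only boundedly often only if they are not low-degree'';
\item that each replacement ``would strictly increase $\rho$ unless $L\setminus J$ is essentially already of intersection type''.
\end{itemize}
The last claim is also not what a Rayleigh comparison yields. It gives only a weight inequality, not structural equality.

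Performing the replacements simultaneously on all of $L\setminus J$ is also not available here. The lemma concerns the given extremal graph $G$ and its Perron vector, so a safe replacement can only serve as a one-vertex comparison graph. Moreover, once one vertex $w'$ is rewired, its adjacencies to the $Z_j$ change. Then $R_j$ is no longer the common neighbourhood of $Z_j$, so neither the $F$-freeness transfer nor the additivity of first-order changes survives iteration.

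The missing idea is short and uses only ingredients you already have: apply your comparison to every vertex $w\notin J$, not just to $v$.
\begin{itemize}
\item For $w\in P_j\setminus J$, spectral extremality and the Rayleigh quotient of the safe replacement at $w$ give $\sum_{y\in N(w)}x_y\ge\sum_{y\in R_j}x_y$.
\item By \Cref{lem:heavy-alignment} and \eqref{eq:R-complement}, $R_j$ contains all but $O(1)$ vertices of each $H_k$ with $k\ne j$. Hence $\sum_{y\in R_j}x_y\ge c_1n\mu$.
\item The eigen-equation $\rho x_w=\sum_{y\in N(w)}x_y$ together with $\rho\le n-1$ then gives $x_w\ge c_1\mu$.
\end{itemize}
So there are no light vertices outside $J$ at all. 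The possibly linear set $L$ from cleaning is irrelevant at this point, and your two-stage scheme becomes unnecessary.

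Your own computation then finishes the proof, since the inner difference must be nonpositive:
$\sum_{w\in(V\setminus P_i)\setminus N(v)}x_w\le\sum_{w\in(V\setminus P_i)\setminus R_i}x_w\le(f+1)D_0\mu$.
Every nonneighbour of $v$ outside $J$ has coordinate at least $c_1\mu$, so $\operatorname{def}(v)\le(f+1)D_0/c_1+|J|$. This is exactly the paper's route.
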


\begin{proof}
Suppose first that $G$ is edge-extremal. By \Cref{lem:intersection-replacement}, replacing $N(v)$ by $R_i$ is an admissible modification. Hence $d_G(v)\ge |R_i|$.
Both neighborhoods lie outside $P_i$, so \eqref{eq:R-complement} immediately gives $\operatorname{def}(v)=O(1)$.

Now suppose that $G$ is spectral-extremal, and let $\boldsymbol{x}$ be
its positive Perron vector with $\mu=\max_wx_w$.  By
\Cref{lem:heavy-alignment}, every $P_j$ contains a linear set $H_j$ on
which $x_w\ge c_0\mu$.  Since $R_i$ omits only $O(1)$ vertices outside
$P_i$, it contains all but $O(1)$ vertices of every $H_j$, $j\ne i$.
Thus
\begin{equation}\label{eq:Ri-heavy}
\sum_{w\in R_i}x_w\ge c_1n\mu
\end{equation}
for a constant $c_1>0$.

The replacement in \Cref{lem:intersection-replacement} is admissible.
Testing its adjacency matrix with the Perron vector of $G$ shows that
spectral extremality forces $\sum_{w\in N(v)}x_w\ge\sum_{w\in R_i}x_w$. Together with \eqref{eq:Ri-heavy} and $\rho(G)\le n-1$, the Perron
equation gives $x_v\ge c_1\mu$ for every $v\notin J$, after decreasing
$c_1$ if necessary.  Furthermore, put $T_i=\sum_{w\notin P_i}x_w$.
Since $P_i$ is independent and both $N(v)$ and $R_i$ lie outside
$P_i$, the preceding weight comparison gives
\begin{align*}
\sum_{\substack{w\notin P_i\\vw\notin E(G)}}x_w
&=T_i-\sum_{w\in N(v)}x_w\\
&\le T_i-\sum_{w\in R_i}x_w\\
&=\sum_{w\in(V(G)\setminus P_i)\setminus R_i}x_w
 =O(\mu)
\end{align*}
by \eqref{eq:R-complement}.  Every missing vertex outside $J$ has
coordinate at least $c_1\mu$, and $J$ has bounded order.  Therefore
$\operatorname{def}(v)=O(1)$.
\end{proof}

\subsection{Large types and twin classes}

The use of bounded neighborhood types and twin classes is classical in
strong stability and symmetrization arguments \cite{SimonovitsAdditional,TyomkynUzzell}. A closely related spectral proof for triangle-free graphs of chromatic number at least four was given by Zhu and Lin \cite{ZhuLin}. We need a certificate-preserving version, because every modification below must preserve both $F$-freeness and the fixed obstruction $G[J]$ to $4$-colorability.

Classify vertices of $P_i\setminus J$ by their neighborhood in $J$: $C(i,A)=\{v\in P_i\setminus J:N(v)\cap J=A\}$ for $A\subseteq J$.
There are only $O(1)$ such types. Choose a constant $B_0>(f+1)K+2f$, and call a type large if its order exceeds $B_0$.

\begin{lemma}\label{lem:large-cross-complete}
Any two large types belonging to different parts are complete to each other.
\end{lemma}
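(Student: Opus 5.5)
Suppose $C(i,A)$ and $C(j,A')$ are large types with $i\ne j$, and suppose some $v\in C(i,A)$ and $v'\in C(j,A')$ are nonadjacent. We will derive a contradiction by \emph{adding} edges. The plan is to show that any two vertices from these types can be joined without creating $F$ or destroying the certificate. This contradicts edge extremality directly. It also contradicts spectral extremality, since adding an edge strictly increases $\rho$ in a connected graph (\Cref{lem:spectral-delete} gives connectivity). Non-$4$-partiteness is automatic, since $v,v'\notin J$ and $G[J]$ is unchanged (\Cref{lem:certificate}). So everything reduces to showing that $G+vv'$ is $F$-free.

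Suppose instead that a copy $F'$ of $F$ appears in $G+vv'$. It uses the edge $vv'$. The idea is to replace $v$ and $v'$ by fresh members of their own types that are genuinely adjacent. By \Cref{lem:bounded-defect-outside-J}, each vertex outside $J$ misses at most $K$ vertices outside its own part. Since $|C(j,A')|>B_0>(f+1)K+2f$, we can choose vertices as follows.

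\begin{itemize}
\item We seek $\tilde v\in C(i,A)\setminus V(F')$ and $\tilde v'\in C(j,A')\setminus V(F')$ with $\tilde v\tilde v'\in E(G)$.
\item Both must also be adjacent to all neighbors of $v$, respectively $v'$, inside $F'$.
\end{itemize}

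The neighbors of $v$ in $F'$ lying in $J$ are automatically handled, since the type fixes $N(\cdot)\cap J=A$. The remaining neighbors, at most $f-1$ of them, lie outside $J$ and outside $P_i$, because $P_i$ is independent. Each such neighbor $w$ is adjacent to all but at most $K$ vertices of $P_i$. So at most $(f-1)K$ members of $C(i,A)$ fail this requirement. At most $f$ further members are excluded because they lie in $F'$. Hence a suitable $\tilde v$ exists. The same count gives a candidate set of size greater than $K$ for $\tilde v'$, after we add the requirement that $\tilde v'$ be adjacent to $\tilde v$. Since $\tilde v$ misses at most $K$ vertices of $P_j$, such a $\tilde v'$ exists; this is exactly where $B_0>(f+1)K+2f$ is used.

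Replacing $v,v'$ by $\tilde v,\tilde v'$ then yields a copy of $F$ in $G$, provided one care point is handled. A vertex of $F'$ may be adjacent to $v$ in $F'$ and may itself be $v'$, or lie in the other large type. We therefore make the substitutions simultaneously and check the neighbor sets after substitution. The neighbors of $\tilde v$ are $\tilde v'$ together with vertices of $F'\setminus\{v,v'\}$, all handled above. Likewise for $\tilde v'$.

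This bookkeeping, namely choosing the two replacements consistently and disjointly from $F'$, is the only delicate step, and the counting above handles it. The resulting copy of $F$ in $G$ contradicts $F$-freeness. Hence no nonedge exists between the two types, and they are complete to each other.
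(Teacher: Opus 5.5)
Your proposal is correct and follows essentially the same route as the paper. You add the missing edge, use extremality (with connectivity in the spectral case) and the unchanged certificate $G[J]$ to force a copy of $F$ through the new edge, and then replace its two endpoints by fresh members of their types. You choose $\tilde v$ first and then an adjacent $\tilde v'$, using the defect bound $K$ and the size bound $B_0$. Your count of $(f-1)K$ exclusions for the first choice is slightly looser than the paper's $(f-2)K$, but it still fits within $B_0$.
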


\begin{proof}
Suppose $X\subseteq P_i$ and $Y\subseteq P_j$, $i\ne j$, are large types and $xy$ is a missing edge with $x\in X$, $y\in Y$. Adding $xy$ leaves $G[J]$ unchanged and hence preserves non-$4$-partiteness. In the edge case it increases the number of edges, while in the spectral case it strictly increases the spectral radius because $G$ is connected and its Perron vector is positive. Consequently $G+xy$ cannot remain $F$-free, and it contains a copy $F'\cong F$ using the new edge.

Choose $x'\in X\setminus V(F')$ adjacent to every vertex of
$F'\setminus\{x,y\}$ that must be adjacent to $x$.  Conditions involving vertices of $J$ are automatic because $x'$ has the same $J$-type as $x$.  Every required neighbor of $x$ outside $J$ lies outside $P_i$, since $P_i$ is independent and all edges incident with $x$ other than the new edge $xy$ already belong to $G$.  Therefore \Cref{lem:bounded-defect-outside-J} applies to each such requirement. Excluding $V(F')$ removes at most $f$ candidates, and each of the at most $f-2$ required neighbors outside $J$ removes at most $K$ candidates. Hence at most
$f+(f-2)K<B_0$ candidates are excluded.

Next choose $y'\in Y\setminus(V(F')\cup\{x'\})$ satisfying all adjacency requirements of $y$ and also $x'y'\in E(G)$.  Every required neighbor of $y$ inherited from $F'$ and lying outside $J$ is outside $P_j$, because $P_j$ is independent; hence the same defect bound applies.  The forbidden set removes at most $f+1$ candidates, the required neighbors inherited from $y$ remove at most $(f-2)K$ candidates, and the condition $x'y'\in E(G)$ removes at most another $K$ candidates.  Thus at most
$f+1+(f-1)K<B_0$ candidates are excluded.  Replacing $x,y$ by $x',y'$ in
$F'$ produces a copy of $F$ in $G$, a contradiction.
\end{proof}

\begin{lemma}[Bounded twin refinement]\label{lem:twin-refinement}
There is a bounded set $S\supseteq J$ such that each $P_i\setminus S$ is
nonempty, every retained refined class of $P_i\setminus S$ is a twin
class, and any two surviving classes in different parts are complete to
each other. Moreover every vertex of $S$ is either complete or
anticomplete to each surviving class.
\end{lemma}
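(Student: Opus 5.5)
We would construct $S$ explicitly: start from $J$ and add every vertex lying in a small type. Concretely, put
\[
S=J\cup\bigcup_{i\in[4]}\ \bigcup_{A\subseteq J:\ |C(i,A)|\le B_0} C(i,A).
\]
Since $|J|\le K_0$ and there are at most $4\cdot 2^{K_0}$ types, each contributing at most $B_0$ vertices, $S$ is bounded. The retained classes of $P_i\setminus S$ are then exactly the large types $C(i,A)$. Each $P_i\setminus S$ is nonempty because $|P_i\setminus J|\ge n/5$ while the small types remove only $O(1)$ vertices, so at least one large type survives in each part.

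\textbf{Twin property.} Any two vertices of the same large type $C(i,A)$ have the same neighbourhood in $J$, namely $A$. They have no neighbours inside $P_i$, since $P_i$ is independent. By \Cref{lem:large-cross-complete}, they are adjacent to every vertex of every large type in the other parts. It remains to control their neighbours among the small-type vertices $S\setminus J$. Here we would not try to prove equality in $G$ directly. Instead we would argue by extremality, as in \Cref{lem:intersection-replacement}.

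\textbf{Extremality step.} Fix a large type $X=C(i,A)$ and a vertex $v\in X$, and choose $x\in X$ maximizing $d(x)$ in the edge case, or maximizing $\sum_{w\in N(x)}x_w$ in the spectral case. Replace $N(v)$ by $N(x)$. This is Zykov symmetrization; $G[J]$ is unchanged because $v\notin J$ and $N(v)\cap J=N(x)\cap J$. $F$-freeness is preserved by the standard twin argument: a new copy of $F$ through $v$ but avoiding $x$ maps to a copy using $x$, and a copy using both $v$ and $x$ is impossible because they are nonadjacent clones with the same neighbourhood; in that case replace $v$ by a third vertex $x''\in X$ with all the required adjacencies. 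Such an $x''$ exists because the $O(f)$ required neighbours outside $J$ lie outside $P_i$, and so exclude at most $fK$ candidates by \Cref{lem:bounded-defect-outside-J}, while $|X|>B_0>(f+1)K+2f$. Extremality then shows the replacement does not decrease the objective. Doing this for all vertices of each large type simultaneously, in the Zykov manner, produces a graph in which every large type is a twin class. If the original $G$ already had strictly non-twin vertices, the spectral case gives a strict increase via the positive Perron vector. So in the spectral case $G$ itself already has twin classes. In the edge case we pass to this symmetrized extremal graph, which is the ``suitably symmetrized edge-extremal graph'' mentioned in the introduction.

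\textbf{Complete-or-anticomplete property.} After symmetrization, every vertex $s\in S$ has the same adjacency to all members of a twin class, so $s$ is complete or anticomplete to each surviving class. For $s\in J$ this is immediate from the definition of types. For $s\in S\setminus J$ it follows from the twin property.

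\textbf{Main obstacle.} The delicate step is the simultaneous symmetrization. Cloning one vertex could create a copy of $F$ using several clones. We expect the substitution argument to handle this, because a copy of $F$ has at most $8$ vertices and each large type has more than $B_0$ members, leaving room to re-route every clone. One must also check that the certificate $J$ and the cleaning structure, namely the colour classes $P_i$ and the defect bound $K$, survive unchanged, so that \Cref{lem:large-cross-complete} can still be invoked afterwards.
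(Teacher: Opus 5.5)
Your spectral-case argument is sound once the equality case is handled as in \Cref{lem:one-class-per-part}. Choose $x\in X$ of maximal neighbourhood weight. A single replacement $N(v)\to N(x)$ is admissible by your third-vertex re-routing, which uses \Cref{lem:bounded-defect-outside-J} in $G$ itself. (Your claim that a copy cannot use both $v$ and $x$ is false, but the re-routing covers that case.) So every large type of a spectral-extremal $G$ is already a twin class.

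\textbf{The gap is in the edge case.} There the maximal-degree comparison only gives $d(v)=d(x)$ for $v\in X$, not $N(v)=N(x)$. (Also, it is the choice of $x$ that makes the change nonnegative and extremality that makes it nonpositive, not the other way round.) So you never show that the large types of the given $G$ are twin classes. Instead you pass to a symmetrized graph, whereas the lemma is a statement about the fixed $G$. Moreover, the admissibility of that symmetrization is exactly what you leave as the ``main obstacle.'' Your re-routing invokes the defect bound $K$ and \Cref{lem:large-cross-complete} in the \emph{current} graph, and these are not automatic after modifications. For instance, if some $s\in S\setminus J$ is nonadjacent to $x$, cloning all of $X$ to $x$ makes $s$ anticomplete to a set of order $>B_0>K$.

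\textbf{How the paper avoids this.} The paper uses no extremality here; it performs a second, purely combinatorial refinement. Put $S_0=J\cup{}$(small types), refine each large type by its full adjacency vector on $S_0$, and move every refined class of order at most $B_0$ into $S$. Retained classes are uniform on $S_0$ by construction. The vertices moved at the second stage lie in \emph{large initial types}, so \Cref{lem:large-cross-complete} makes them complete to retained classes in other parts and anticomplete within their own part. Hence $S$ stays bounded and the retained classes are twin classes of $G$ itself, for both objectives.

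\textbf{A repair of your own construction.} Your $S=J\cup{}$(small types) can also be saved without symmetrization, via a one-endpoint version of \Cref{lem:large-cross-complete}. Suppose $v\in X=C(i,A)$ is nonadjacent to some $s\in P_j\setminus J$ with $j\ne i$. Add $vs$; this strictly increases either objective and preserves $G[J]$, so it creates a copy $F'$ through $vs$. Re-route only $v$, replacing it by some $v'\in X\setminus V(F')$ adjacent to $s$ and to the other $F'$-neighbours of $v$. Such a $v'$ exists: $\operatorname{def}(s)\le K$, and each required neighbour outside $J$ lies outside $P_i$ with defect at most $K$. So at most $f+(f-1)K<B_0$ candidates are excluded. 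Hence every large-type vertex is complete to $\bigcup_{j\ne i}(P_j\setminus J)$, and your large types are already twin classes in $G$.
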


\begin{proof}
There are at most $4\cdot2^{|J|}$ initial types.  Put every initial type
of order at most $B_0$ into $S_0$, together with $J$; hence $|S_0|$ is
bounded.  Refine each remaining initial type by the full neighborhood
vector on $S_0$.  There are at most $2^{|S_0|}$ refined classes inside
each initial type.  Move every refined class of order at most $B_0$ into
$S$ and retain all other refined classes.

The total order of $S$ is bounded. The total number of refined classes
is bounded as well. If some $P_i$ contained no retained refined class,
then every refined class in $P_i$ would have order at most $B_0$, and
hence $|P_i|=O(1)$. This contradicts
$|P_i|=n/4+O(\sqrt n)$ from \Cref{lem:linear-defect}. Thus each
$P_i\setminus S$ is nonempty.

A retained class is uniform on $S_0$ by its definition. Two retained
classes in the same part are anticomplete, and two retained classes in
different parts are complete by \Cref{lem:large-cross-complete}, because
their initial types were large. Finally, a vertex moved at the second
refinement stage belongs to a large initial type; it is anticomplete to
retained classes in its own part and complete to retained classes in
other parts. Thus every vertex of $S$ is uniform on every retained
class, and the retained classes are genuine twin classes.
\end{proof}

The next argument is a constrained form of Zykov symmetrization
\cite{Zykov}, within the broader structural tradition of Simonovits's
extremal theory under additional chromatic conditions
\cite{SimonovitsAdditional}.  The cloning operations here must preserve
not only the objective but also $F$-freeness and the fixed
non-$4$-colorable certificate $G[J]$, so we keep the argument in full.

\begin{lemma}\label{lem:one-class-per-part}
After an objective-preserving modification in the edge-extremal case, every part $P_i\setminus S$ consists of one twin class. In the spectral-extremal case this already holds in $G$.
\end{lemma}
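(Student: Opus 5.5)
We plan to use Zykov-type cloning inside one part. Fix $i$ and suppose $P_i\setminus S$ contains two distinct retained twin classes $X$ and $Y$. By \Cref{lem:twin-refinement}, $X$ and $Y$ are both anticomplete to $P_i$, complete to every retained class of the other parts, and uniform on $S$. Hence they differ only in their neighborhoods inside $S$. Pick representatives $x\in X$ and $y\in Y$. The basic move deletes all edges at every vertex of $Y$ and makes each such vertex a twin of $x$, that is, gives it neighborhood $N(x)$. This keeps $G[J]$ unchanged, because $J\subseteq S$ and $Y\cap S=\varnothing$. So by \Cref{lem:certificate} the new graph is still non-$4$-partite.

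The move also preserves $F$-freeness, because cloning a vertex into an independent set of twins never creates $F$. Any copy of $F$ uses at most one vertex from an independent set of twins of $x$ in any essential way: two twins in $F$ are nonadjacent and can be replaced by distinct original twins of $x$. Since $|X|>B_0\ge f$, there are enough twins of $x$ outside the copy to map every cloned vertex back into $X$. This yields a copy of $F$ in $G$, a contradiction. The same argument works with the roles of $X$ and $Y$ exchanged.

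\emph{Edge case.} Cloning $Y$ onto $x$ changes $e(G)$ by $|Y|(d(x)-d(y))$, up to the fact that $X$ and $Y$ are nonadjacent, so no edges between them are lost or double counted. Cloning $X$ onto $y$ changes $e(G)$ by $|X|(d(y)-d(x))$. Edge-extremality forces both changes to be at most $0$, so $d(x)=d(y)$. Then cloning $Y$ onto $x$ leaves the edge count unchanged and merges $X\cup Y$ into one twin class. We iterate over the boundedly many retained classes in each part. Each step keeps the graph in $\EX_5(n,F)$, keeps $J$, $S$ and the structure of \Cref{lem:twin-refinement}, and strictly reduces the number of classes.

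\emph{Spectral case.} Let $\boldsymbol x$ be the positive Perron vector and $\rho=\rho(G)$. Define $\sigma_x=\sum_{w\in N(x)}x_w=\rho x_x$, and similarly $\sigma_y=\rho x_y$. Suppose $x_x\ge x_y$, and clone every vertex of $Y$ onto $x$. Test the new adjacency matrix with $\boldsymbol x$; the vector need not be renormalized. The Rayleigh quotient changes by
\[
2\sum_{v\in Y}x_v(\sigma_x-\sigma_y)\ge0,
\]
because $X$ and $Y$ are nonadjacent, so no term involves $X\times Y$ edges counted inconsistently. Hence $\rho$ does not decrease. Maximality then forces equality, and $\boldsymbol x$ must remain a Perron vector of the new graph. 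In particular $\rho x_v=\sigma_x$ for $v\in Y$, so $x_x=x_y$.

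The step we expect to be the main obstacle is turning this into a contradiction in $G$ itself rather than in a modified graph. If $N(x)\ne N(y)$, choose $s\in S$ adjacent to $x$ but not to $y$. The eigenvector equation at $s$ in the new graph gains the term $\sum_{v\in Y}x_v>0$ while $\rho$ and $\boldsymbol x$ are unchanged. This contradicts the fact that $\boldsymbol x$ is an eigenvector of the new graph. If instead every such $s$ has the reverse orientation, we exchange the roles of $x$ and $y$, using $x_x=x_y$. Therefore $N(x)=N(y)$, so $X\cup Y$ was already a single twin class, contradicting distinctness. Thus in the spectral case $G$ itself has exactly one retained class in each part, and no modification is needed.
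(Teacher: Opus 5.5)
Your proposal is correct and follows essentially the paper's route: the unused-clone argument for $F$-freeness, preservation of $G[J]$ for non-$4$-partiteness, a degree comparison in the edge case, and, in the spectral case, Rayleigh-quotient equality forcing $\boldsymbol{x}$ to be an eigenvector of the modified graph, which then fails at a vertex of $S$ distinguishing the two classes. The differences are cosmetic: you clone the whole class $Y$ at once, so the edge-case merger is a single step rather than the paper's vertex-by-vertex induction, and your final exchange of roles is unnecessary, because a vertex $s$ adjacent to $y$ but not to $x$ already has the right-hand side of its eigen-equation drop by $\sum_{v\in Y}x_v>0$.
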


\begin{proof}
Let $A,B\subseteq P_i\setminus S$ be distinct surviving twin classes.
They both have more than $f$ vertices, and they differ only in their
neighborhoods in $S$.

For a twin class $T$, write $N(T)=N_G(t)$ for any $t\in T$. In the
spectral case, permutations within $T$ are automorphisms of the
connected graph $G$, so the positive Perron vector is constant on $T$.
Let $a$ and $b$ be its common coordinates on $A$ and $B$, respectively,
and put $\sigma_A=\sum_{w\in N(A)}x_w=\rho a$ and $\sigma_B=\sum_{w\in N(B)}x_w=\rho b$. Choose $v\in A$ and fix $b_0\in B$. Form $G'$ by leaving all adjacencies not incident with $v$ unchanged and setting $N_{G'}(v)=N_G(b_0)=N(B)$.
No loop is created because $A\cup B\subseteq P_i$ is independent. The
modified graph is $F$-free: if a new copy uses $v$, an unused vertex of
$B$ replaces it. It also preserves $G[J]$. Let $\boldsymbol{x}$ be the
old Perron vector. Only the edges incident with $v$ change, so
\[
\boldsymbol{x}^{\mathsf T}A(G')\boldsymbol{x}
 -\boldsymbol{x}^{\mathsf T}A(G)\boldsymbol{x}
 =2a(\sigma_B-\sigma_A).
\]
Consequently,
\[
\frac{\boldsymbol{x}^{\mathsf T}A(G')\boldsymbol{x}}
 {\|\boldsymbol{x}\|_2^2}
 =\rho(G)+\frac{2a(\sigma_B-\sigma_A)}{\|\boldsymbol{x}\|_2^2}.
\]
Since $G'$ is admissible, spectral extremality gives $\rho(G')\le\rho(G)$, whereas the Rayleigh principle gives the displayed quotient at most $\rho(G')$.  Thus $\sigma_B\le\sigma_A$. Reversing $A$ and $B$ gives the opposite inequality, so $\sigma_A=\sigma_B$ and, from $\rho a=\sigma_A=\sigma_B=\rho b$, also $a=b$.

For the first replacement the displayed quotient is therefore exactly
$\rho(G)$. Hence $\rho(G')=\rho(G)$. Equality in the Rayleigh principle
for the real symmetric matrix $A(G')$ implies that $\boldsymbol{x}$
belongs to its top eigenspace, so $\boldsymbol{x}$ is an eigenvector for
$\rho(G')$. All adjacencies outside $S$ coincide for vertices in the same part. Since
$A$ and $B$ are distinct, some $s\in S$ distinguishes them; the right-hand
side of the eigen-equation at $s$ changes by $a$ or by $-a$. Since $a>0$,
the old vector cannot remain an eigenvector, a contradiction.  Hence only one class can
remain in each part.

Now assume that $G$ is edge-extremal. Let $d_A$ and $d_B$ denote the
common degrees of the vertices in $A$ and $B$, respectively. Replacing one vertex of $A$ by a
clone of $B$ changes the edge count by $d_B-d_A$ and is admissible by
the same unused-clone argument.  The reverse replacement is also
admissible, so $d_A=d_B$.  Since $A\cup B\subseteq P_i$ is independent,
cloning one vertex of $A$ to the type of $B$ changes no edge incident
with any other vertex of $A\cup B$.  Consequently the degrees of all
remaining $A$-vertices and all $B$-type vertices are still $d_A=d_B$.
Inductively, replacing the vertices of $A$ one at a time by clones of
$B$ preserves the edge count at every step.  The unused-clone argument
continues to preserve $F$-freeness, and $G[J]$ is unchanged throughout.
The two classes therefore merge without losing extremality or
non-$4$-partiteness.  Repeating this process leaves one twin class in
each part.
\end{proof}

Thus there is a partition $V(G)=S\mathbin{\dot\cup}Q_1\mathbin{\dot\cup}Q_2\mathbin{\dot\cup}Q_3\mathbin{\dot\cup}Q_4$, where $|S|=O(1)$, each $Q_i$ is independent, distinct $Q_i,Q_j$ are complete to each other, and every $s\in S$ is complete or anticomplete to each $Q_i$.

\subsection{Constant-order balance}

\begin{lemma}\label{lem:constant-balance}
For both extremal problems, $\bigl||Q_i|-(n-|S|)/4\bigr|=O(1)$ for every $i\in[4]$.
\end{lemma}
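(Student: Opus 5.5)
We argue by moving single vertices between the large classes and checking that such moves are admissible and improve the objective whenever the classes are unbalanced by more than a fixed constant. Write $q_i=|Q_i|$ and $N=n-|S|$. By \Cref{lem:linear-defect} we already know $q_i=N/4+O(\sqrt n)$. Suppose $q_i-q_j\ge K_1$ for a large constant $K_1$ to be chosen. Pick a vertex $v\in Q_i$. Form $G'$ by moving $v$ into $Q_j$: delete all edges from $v$ to $Q_j$, join $v$ to all of $Q_i\setminus\{v\}$, and give $v$ the $S$-adjacencies of a vertex of $Q_j$.

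\emph{Admissibility.} The new vertex $v$ is a twin of the vertices of $Q_j$. Since $|Q_j|>f$, any new copy of $F$ through $v$ can use an unused vertex of $Q_j$ in place of $v$. Hence $G'$ is $F$-free. The set $J\subseteq S$ is untouched, so $G[J]$ is unchanged and $G'$ is non-$4$-partite by \Cref{lem:certificate}. The result is again a template of the same shape, with $q_i$ decreased by one and $q_j$ increased by one.

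\emph{Edge case.} Let $k_i$ and $k_j$ be the numbers of $S$-vertices complete to $Q_i$ and to $Q_j$, so that $|k_i-k_j|\le |S|=O(1)$. The degree of $v$ changes from $N-q_i+k_i$ to $N-q_j+k_j$. The number of edges therefore changes by $(q_i-q_j-1)+(k_j-k_i)$, which is positive once $K_1>|S|+1$. This contradicts edge extremality. (Alternatively, one can use the explicit quadratic formula for $e(G)$ as in \Cref{lem:edge-template}.)

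\emph{Spectral case.} We use the Perron vector $\boldsymbol{x}$, which is constant on each $Q_\ell$; call these values $y_\ell$. The Perron equation for $v\in Q_\ell$ gives
\[
\rho y_\ell=\sum_{m\ne \ell} q_m y_m+\sum_{s\in S,\ s\sim Q_\ell}x_s .
\]
Let $T=\sum_m q_my_m$. Then $(\rho+q_\ell)y_\ell=T+O(\mu)$, since $|S|=O(1)$. Hence $y_j-y_i\ge c(q_i-q_j)\mu/n-O(\mu/n)$ for some $c>0$, using $\rho=\Theta(n)$ and $T=\Theta(n\mu)$; the lower bound on $T$ comes from \Cref{lem:heavy-alignment}. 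We test $A(G')$ with $\boldsymbol{x}$. The quadratic form changes by $2x_v\bigl(\sum_{w\in N_{G'}(v)}x_w-\sum_{w\in N_G(v)}x_w\bigr)$. The bracket equals $(\rho y_j-\rho y_i)+O(\mu)$, because $N_{G'}(v)$ is the neighborhood of a $Q_j$-vertex with $v$ itself added in place of one $Q_i$-vertex; this correction is $O(\mu)$. The bracket is therefore at least $c(q_i-q_j)\mu-O(\mu)>0$ for $K_1$ large. By the Rayleigh principle $\rho(G')>\rho(G)$, contradicting spectral extremality. Hence $|q_i-q_j|<K_1$ for all $i,j$, which gives the claim.

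The main obstacle is the spectral case, where the $O(\mu)$ error terms must be uniform. The constant must not depend on the unbounded imbalance. To secure this, the plan is to first bound $|y_i-y_j|$ linearly in $|q_i-q_j|$ with an additive $O(\mu/n)$ error, using the exact quotient Perron equations, before making the comparison. One also needs $x_s=O(\mu)$ for $s\in S$, which is trivial, and $T\ge c'n\mu$, which holds because each $Q_\ell$ contains all but $O(1)$ vertices of a heavy set $H_\ell$. A further subtlety is that $|S|$ is bounded but depends on earlier constants, so $K_1$ must be chosen after $B_0$.
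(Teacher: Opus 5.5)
Your argument is correct and is essentially the paper's: the same one-vertex move from $Q_i$ to $Q_j$ (admissible because an unused twin in $Q_j$ can replace $v$ and $G[J]$ is untouched), the same edge change $(q_i-q_j-1)+(k_j-k_i)$ (the new degree of $v$ is $N-q_j-1+k_j$, a harmless slip in your text), and a Rayleigh-quotient test with the Perron vector combined with the quotient Perron equations. The only difference is the order of deduction in the spectral case: you first extract $y_j-y_i\ge c(q_i-q_j)\mu/n-O(\mu/n)$ from $(\rho+q_\ell)y_\ell=T+O(\mu)$ and then make a single move, whereas the paper uses both directions of the move to get $|\xi_i-\xi_j|=O(\xi_*/n)$ and then subtracts the Perron equations for $i$ and $j$.
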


\begin{proof}
Let $q_i=|Q_i|$. Since $S$ is bounded and the original color classes have order $n/4+O(\sqrt n)$, every $q_i$ is linear in $n$.

Suppose first that $G$ is spectral-extremal, and let $\xi_i$ be the
common Perron coordinate on $Q_i$. By \Cref{cor:spectraAB},
$\rho(G)\ge\rho(B_n)=3n/4-O(1)$, while $\rho(G)\le n-1$; hence
$\rho(G)=\Theta(n)$.
Fix distinct $i,j$, choose $x\in Q_i$ and $y\in Q_j$, and form $G'$ by
leaving all adjacencies not incident with $x$ unchanged and setting $N_{G'}(x)=N_G(y)\setminus\{x\}$.
Equivalently, update the core partition to
$Q_i'=Q_i\setminus\{x\}$ and $Q_j'=Q_j\cup\{x\}$; then $x$ has the
same type as the vertices of $Q_j$. Since $|Q_j|$ is linear in $n$ and
$f$ is fixed, any new copy of $F$ using $x$ leaves an unused vertex of
$Q_j$ that can replace $x$. Thus the operation is $F$-free and
preserves $G[J]$.

Evaluate the Rayleigh quotient of $G'$ at the Perron vector of $G$. The old
and new neighborhood weights of $x$ are $\rho\xi_i$ and
$\rho\xi_j-\xi_i$, respectively. Hence the change in the adjacency quadratic form at this vector is $2\xi_i(\rho\xi_j-(\rho+1)\xi_i)$.
The modified graph is admissible, so spectral extremality and the Rayleigh principle imply that this quantity is nonpositive. Since $\xi_i>0$, we have $(\rho+1)\xi_i\ge\rho\xi_j$.
The reverse move gives the symmetric inequality, and therefore, with
$\xi_*:=\max_k\xi_k$,
\begin{equation}\label{eq:xi-close}
|\xi_i-\xi_j|=O(\xi_*/n).
\end{equation}
In particular every $\xi_i=(1+O(n^{-1}))\xi_*$.

Let $z_s$ be the Perron coordinate of $s\in S$ and put $b_i=\sum_{s\in S,\,s\sim Q_i}z_s$.
If $z_*:=\max_{s\in S}z_s$, the Perron equation at a vertex attaining $z_*$ gives $(\rho-|S|)z_*\le n\xi_*$,
so $z_*=O(\xi_*)$ and hence $b_i=O(\xi_*)$. The equation on $Q_i$ is $\rho\xi_i=\sum_{k\ne i}q_k\xi_k+b_i$.
Subtracting the equations for $i$ and $j$ and using \eqref{eq:xi-close} gives $q_i\xi_i-q_j\xi_j=O(\xi_*)$.
Since $q_j=O(n)$ and $|\xi_i-\xi_j|=O(\xi_*/n)$, it follows that
$|q_i-q_j|\xi_i=O(\xi_*)$.  The coordinates $\xi_i$ are comparable to
$\xi_*$, so $|q_i-q_j|=O(1)$.

For an edge-extremal template, use the same move and let $a_i=|\{s\in S:s\text{ is complete to }Q_i\}|$.
Moving one vertex from the type of $Q_i$ to the type of $Q_j$ changes the edge count by $q_i-q_j-1+a_j-a_i$. Both directions are admissible, so $|(q_i-q_j)-(a_i-a_j)|\le1$.
Since $|a_i-a_j|\le|S|$, we again obtain $|q_i-q_j|=O(1)$.
The asserted balance follows by summing the four class sizes.
\end{proof}

Simonovits's theory under additional chromatic conditions already gives, for the edge objective, a finite symmetrized description of sufficiently large extremal graphs \cite{SimonovitsAdditional}. Tyomkyn and Uzzell obtained a strong twin-structure form of Tur\'an stability \cite{TyomkynUzzell}, while spectral skeletons, exact blow-up structures and a related higher-chromatic spectral extremal problem have been studied in other settings \cite{ZhangSkeletons,HuangLiuRong,ZhuLin}. The theorem below gives the sharper form needed here: it produces the same certificate-preserving four-partite template for every spectral-extremal graph and for an edge-extremal representative.

\begin{theorem}[Global bounded-template reduction]\label{thm:global-reduction}
There is a constant $C$ such that, for all sufficiently large $n$,
\begin{enumerate}[label=(\roman*),leftmargin=2.2em]
\item every graph in $\SPEX_5(n,F)$ belongs to $\mathfrak B_C(n,F)$;
\item at least one graph in $\EX_5(n,F)$ belongs to $\mathfrak B_C(n,F)$.
\end{enumerate}
\end{theorem}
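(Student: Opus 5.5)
The plan is to assemble the lemmas of \Cref{sec:global-reduction} in the order in which they were proved. I will check that each produces a graph of the required form, and that in the spectral case no modification was ever applied to $G$ itself.

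Fix $G$, either spectral-extremal or edge-extremal, and choose $u$ by \Cref{lem:spectral-delete} or \Cref{lem:edge-delete}. This gives the $4$-coloring $P_1,\dots,P_4$ of $G-u$. \Cref{lem:linear-defect} and \Cref{lem:certificate} then supply the bounded certificate $J\ni u$ with $\chi(G[J])>4$. \Cref{lem:bounded-defect-outside-J} bounds the defects outside $J$. \Cref{lem:twin-refinement} produces a bounded $S\supseteq J$, and \Cref{lem:one-class-per-part} leaves exactly one twin class $Q_i=P_i\setminus S$ in each part. These classes are independent, pairwise complete, and every $s\in S$ is complete or anticomplete to each $Q_i$. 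This gives conditions (ii) and (iii) of $\mathfrak B_C(n)$. \Cref{lem:constant-balance} gives (iv) with some constant $C'$. The bound $|S|\le C''$ comes from the fact that all constants ($K_0$, $K$, $B_0$, the number of types) depend only on $F$ and the fixed parameters $\eta,\theta,D_0$. I then take $C=\max(C',C'',5)$.

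For (i), the key point is that in the spectral case none of the replacement lemmas changes $G$. They are used only as competitors, to derive inequalities on Perron weights or contradictions. Consequently $G$ itself has the template structure. The only thing to confirm is that \Cref{lem:one-class-per-part} in the spectral case concludes directly that $G$ has one class per part; it does, via the eigenvector contradiction. Membership in $\mathfrak B_C(n,F)$ additionally requires $G$ to be $F$-free and non-$4$-partite, and this holds by the definition of $\SPEX_5(n,F)$.

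For (ii), the edge case is where care is needed. The merging step in \Cref{lem:one-class-per-part} replaces $G$ by a graph $G^\ast$ with $e(G^\ast)=e(G)$. This $G^\ast$ is still $F$-free, because of the unused-clone argument, and still non-$4$-partite, because $G[J]$ is preserved. Hence $G^\ast\in\EX_5(n,F)$. I then apply \Cref{lem:constant-balance} to $G^\ast$ rather than to $G$. The exchange argument there needs only edge-extremality of $G^\ast$, admissibility of moves between twin classes, and the template structure, and all three hold. So $G^\ast\in\mathfrak B_C(n,F)$.

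The main obstacle is exactly this edge-case bookkeeping. After the merging, the sets $P_i$, $J$ and $S$ must still play their roles for $G^\ast$, and the hypotheses of \Cref{lem:constant-balance} must hold for $G^\ast$ rather than for $G$. The merging alters only adjacencies between vertices of $P_i\setminus S$ and $S\setminus J$. Therefore $G^\ast-u$ is still $4$-colored by the same $P_i$, and $G^\ast[J]=G[J]$. Only the structure of $G^\ast$ and its extremality are used afterwards, so the balance argument transfers to $G^\ast$. I would write this verification out explicitly.
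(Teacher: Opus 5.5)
Your proposal is correct and follows the paper's proof essentially step for step. In particular, your explicit check that \Cref{lem:constant-balance} is applied to the merged edge-extremal graph $G^\ast$ is exactly what the paper does when it renames that graph $G$.

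One small slip: two retained classes in the same part may come from different initial $J$-types, so the mergers can also change adjacencies between $P_i\setminus S$ and $J$, not only between $P_i\setminus S$ and $S\setminus J$. This is harmless. The recloned vertex lies outside $J$ and its new neighborhood $N(B)$ avoids $P_i$, so $G^\ast[J]=G[J]$ and $P_1,\dots,P_4$ still properly color $G^\ast-u$.
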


\begin{proof}
Let $G$ be extremal for one of the two objectives.  By
\Cref{lem:edge-delete,lem:spectral-delete}, there is a vertex $u$ such
that $G-u$ has independent color classes $P_1,\ldots,P_4$.
\Cref{lem:linear-defect} gives a linear bound on the total number of
missing crossing edges and also
$|P_i|=n/4+O(\sqrt n)$.  The construction in
\Cref{lem:certificate} then produces a set $J$ of uniformly bounded
order with $\chi(G[J])>4$.  In particular, every later operation that
keeps the induced subgraph $G[J]$ unchanged preserves non-$4$-partiteness.

The safe replacements of \Cref{lem:intersection-replacement}, used as
comparison operations rather than performed on the spectral extremal
graph, imply through \Cref{lem:bounded-defect-outside-J} that every
vertex outside $J$ has crossing defect at most a constant $K$.
Consequently there are only boundedly many $J$-types. After all small
types and all small refined classes are moved into the exceptional set,
\Cref{lem:twin-refinement} yields a set $S\supseteq J$ of uniformly
bounded order such that every $P_i\setminus S$ is nonempty, the
surviving classes are twin classes, classes in the same $P_i$ are
anticomplete, classes in different parts are complete, and each vertex
of $S$ is uniform on every surviving class.

By \Cref{lem:one-class-per-part}, in the spectral case there is already
only one surviving class in each part, so no graph modification is
needed.  In the edge case, the successive mergers used there preserve the
number of vertices, the number of edges, $F$-freeness, and the induced
subgraph on $J$.  They therefore preserve both edge extremality and
non-$4$-partiteness, and produce an edge-extremal graph with one
surviving class $Q_i$ in each part; in this case we rename the resulting
graph as $G$.  Thus in either case we have $V(G)=S\mathbin{\dot\cup}Q_1\mathbin{\dot\cup}Q_2\mathbin{\dot\cup}Q_3\mathbin{\dot\cup}Q_4$, where the $Q_i$ are independent, distinct $Q_i,Q_j$ are complete to
one another, and every vertex of $S$ is complete or anticomplete to
each $Q_i$.  Finally, \Cref{lem:constant-balance} gives $\bigl||Q_i|-(n-|S|)/4\bigr|=O(1)$ for every $i\in[4]$.
All bounds depend only on $F$.  Choose one constant $C\ge5$ larger than
the bound on $|S|$ and all four balance constants.  The spectral graph
itself, and the edge-extremal graph obtained by the admissible mergers,
then belong to $\mathfrak B_C(n,F)$, proving (i) and (ii).
\end{proof}

\section{Proof of the main theorem and concluding remarks}

\subsection{Proof of the main theorem}

\begin{proof}[Proof of \Cref{thm:main}]
The criticality and chromatic number of $F$ are recorded in \Cref{prop:Fcritical} from the cited results of Lam et al.~and Hou et al. Choose $C\ge5$ large enough for \Cref{thm:global-reduction}. By \Cref{thm:template-separation}, the edge-extremal graphs in $\mathfrak B_C(n,F)$ have $D=2$, while the spectral-extremal graphs in this class have $D=3$ concentrated at one vertex.

Every global spectral-extremal graph belongs to $\mathfrak B_C(n,F)$. If such a graph were also globally edge extremal, it would be both edge extremal and spectral extremal inside $\mathfrak B_C(n,F)$, contradicting \Cref{thm:template-separation}. Hence $\SPEX_5(n,F)\cap\EX_5(n,F)=\varnothing$.

By \Cref{thm:global-reduction}(ii), a global edge-extremal graph may be taken in $\mathfrak B_C(n,F)$; \Cref{lem:edge-template,thm:template-separation} then give $\operatorname{ex}_5(n,F)=t_4(n)-n/2+O(1)$.
Likewise, \Cref{lem:edge-template,lem:spectral-template,thm:template-separation} give, for every global spectral-extremal graph,
\[
e(G)=t_4(n)-\frac{3n}{4}+O(1),
\qquad
\rho(G)=\frac{3n}{4}-\frac34+O(n^{-1}).
\]
\end{proof}

\subsection{Concluding remarks}

The example identifies a genuine boundary for spectral-to-edge transfer
in non-\(r\)-partite Tur\'an problems. In the positive results known so
far, the departure from the Tur\'an graph is usually governed by one
local obstruction, or by several obstructions whose costs are forced to
agree. For \(F = K_1 \vee \mu(K_3)\) there are two inequivalent ways to
realize the necessary failure of \(4\)-colorability. The edge count
rewards the cheaper total defect, while the spectral radius rewards a
more concentrated defect. The two objectives therefore select different
global extremal graphs.

The bounded-template reduction is also useful beyond the particular
counterexample: it shows that a near-Tur\'an spectral-extremal graph can
sometimes be reduced to a finite coloring problem even when the extremal
correction is linear in \(n\). The ingredients specific to the present
problem are the \(F\)-specific defect parameter, the bounded
color-forcing certificate, the common certificate-preserving template
for the two objectives, and the finite optimization that separates edge
and spectral extremality.

\section*{Data availability}

No data were used for the research described in this article.

\section*{Declaration on the use of generative AI}

The authors used ChatGPT (OpenAI) to discuss proof strategies, check intermediate arguments, and improve the exposition. The authors reviewed and verified the mathematical content and take full responsibility for the manuscript.

\end{document}